\documentclass[a4paper]{article}
\usepackage{graphicx} 
\usepackage{amsmath, amsthm, amsfonts, amssymb, amscd, bm}
\usepackage{verbatim}
\usepackage{enumerate}
\usepackage{url}

\newtheorem{theorem}{Theorem}[section]
\newtheorem{corollary}{Corollary}[theorem]
\newtheorem{lemma}{Lemma}[section]
\newtheorem{definition}{Definition}[section]

\theoremstyle{remark}
\newtheorem{remark}{Remark}[section]

\def\F{\mathcal{F}}
\def\E{\mathcal{E}}
\def\H{\mathcal{H}}
\def\R{\mathbb{R}}
\def\SL{\mathcal{S\hspace{-0.03cm}L}}
\def\ext{\mathrm{ext}}

\begin{document}

\title{Sublinear Expectations and Martingales in discrete time}
\author{Samuel N. Cohen\footnote{University of Oxford, samuel.cohen@maths.ox.ac.uk. This work was begun while Samuel Cohen was visiting Shandong University from the University of Adelaide.} \\ Shaolin Ji\footnote{Shandong University, jsl@sdu.edu.cn.}\\ Shige Peng\footnote{Shandong University, peng@sdu.edu.cn.}}

\maketitle
\begin{abstract}
We give a theory of sublinear expectations and martingales in discrete time. Without assuming the existence of a dominating probability measure, we derive the extensions of classical results on uniform integrability, optional stopping of martingales, and martingale convergence. We also give a theory of BSDEs in the context of sublinear expectations and a finite-state space, including general existence and comparison results.

Keywords: Sublinear expectation, capacity, $G$-expectation, BSDE.
MSC: 60G05, 60F15, 60A86

\end{abstract}

\section{Introduction}
The evaluation of risky outcomes in the presence of uncertainty is a difficult activity. Many problems, particularly in finance, revolve around the difficulties of assigning values to uncertain outcomes, when one may have very limited knowledge even of the probabilities associated with each outcome. In the presence of such `Knightian' uncertainty, many of our standard mathematical tools fail.

In this paper, we consider the natural generalisation of classical probability theory to such a situation. In this context, outcomes are evaluated, not using a single probability measure, but using the supremum over a range of measures, which may be mutually singular. This enables the consideration of such phenomena as volatility uncertainty in a financial market, and is closely connected to the theory of risk measures.

Following previous approaches, we axiomatically define nonlinear expectations as operators acting on a linear space of random variables. In this paper, we show how these nonlinear expectations can be extended to spaces with various completeness properties, including a theory of uniform integrability. We also give a theory of sublinear martingales in discrete time, including extensions of the classical results on optional stopping and martingale convergence.

In our context, we need to make few assumptions on the spaces under consideration, which gives our results a wide range of applicability. Our results apply to the case of $g$-expectations, as in \cite{Peng2005}, \cite{Cohen2010} and others, where the expectation is generated by the solutions to a BSDE, and our underlying space is the $L^2$ space under a classical probability measure. Our results also apply to the discrete-time theory of $G$-expectations, either through a direct discretisation of those considered in Peng \cite{Peng2010} and Soner, Touzi and Zhang \cite{Soner2010} in continuous time, or to the discrete-time version considered by Dolinsky, Nutz and Soner \cite{Dolinsky2011}.

We also give a summary of the theory of BSDEs in discrete-time finite-state systems, in the context of sublinear expectations. This restrictive setting is chosen to ensure that the filtrations considered have finite multiplicity, and hence that a finite-dimensional martingale representation result is possible. We give a representation result for nonlinear expectations in this context.

\section{Sublinear Expectations}
We give an axiomatic approach to the theory of sublinear expectations. We will not be overly concerned with the construction of a nonlinear expectation on a space, rather with the determination of the properties of such an expectation. In a finite-time finite-state system, it is known \cite{Cohen2009} that all nonlinear expectations can be constructed using the theory of Backward Stochastic Difference Equations. (This is not the case in general, unless the nonlinear expectation is assumed to have null sets equal to those of some linear expectation, see \cite{Cohen2009a}.)
                                                                                                                                                                                                                                                                     
Let $(\Omega, \F)$ be a measurable space and $\{\F_t\}_{t\in\mathbb{N}}$ be a discrete-time filtration on this space. Assume $\F=\F_\infty=\F_{\infty-}$ and $\F_0$ is trivial. Let $m\F_t$ denote the space of $\F_t$-measurable $\R\cup\{\pm\infty\}$-valued functions. Note that, in this context, the concepts of measurability, adaptedness, stopping times and the $\sigma$-algebra at a stopping time are identical to the classical case.

 \begin{definition} \label{defn:Hdefn}
  Let $\H$ be a linear space of $\F$-measurable $\R$-valued functions on
$\Omega$ containing the constants. We assume only that $X\in\H$ implies $|X|\in\H$ and $I_{A} X\in\H$ for any $A\in\F$, and define $\H_t := \H\cap m\F_t$.
 \end{definition}

\begin{definition} \label{defn:Edefn}
A family of maps $\E_t: \H\to \H_t$ is called a $\F_t$-consistent
nonlinear expectation if, for any $X,Y \in \H$, for all $s\leq t$,
\begin{enumerate}[(i)]
\item $X\geq Y$ implies $\E_t(X)\geq \E_t(Y)$.
\item $\E_s(Y) = \E_s(\E_t(Y))$
\item $\E_t(I_A Y) = I_A\E_t(Y)$ for all $A\in \F_t$.
\item $\E_t(Y)=Y$ for all $Y\in \H_t$.
\end{enumerate}
A nonlinear expectation is called sublinear if it also satisfies
\begin{enumerate}[(i)]
\setcounter{enumi}{4}
\item $\E_t(X+Y) \leq \E_t(X)+\E_t(Y)$
\item $\E_t(\lambda Y) = \lambda^+ \E_t(Y) + \lambda^-\E_t(-Y)$ for all $\lambda\in \H_t$ with $\lambda Y\in \H$.
\end{enumerate}
A nonlinear expectation is said to have the monotone continuity property (or Fatou property, see Lemma \ref{lem:Fatou}) if
\begin{enumerate}[(i)]
  \setcounter{enumi}{6}
 \item For any sequence $\{X_i\}$ in $\H$ such that
 $X_i(\omega)\downarrow 0$ for each $\omega$, we have $\E_0(X_i)\to 0$.
 \end{enumerate}

A $\F_t$-consistent sublinear expectation with the monotone continuity property will, for simplicity, be called a $\SL$-expectation.
As $\F_0$ is trivial, one can equate $\E_0$ with a map $\E:\H\to\mathbb{R}$, satisfying the above
properties.

\end{definition}

\begin{lemma}We can replace property (iii) and/or (vi) with
 \begin{enumerate}[(i')]
\setcounter{enumi}{2}
\item $\E_t(I_A X + I_{A^c}Y) = I_A \E_t(X) + I_{A^c}\E_t(Y)$ for all $A\in\mathcal{F}_t$
\setcounter{enumi}{5}
\item $\E_t(\lambda Y) = \lambda \E_t(Y)$ for all $0\leq\lambda\in \H_t$ with $\lambda Y\in \H$.
\end{enumerate}
without changing the definition of a sublinear expectation.
\end{lemma}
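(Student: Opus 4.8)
The plan is to prove the two advertised equivalences separately, observing that three of the four required implications are essentially immediate and that the only substantive content lies in deriving (vi) from (vi'). Throughout, the remaining axioms (i), (ii), (iv), (v) are held fixed, so establishing each implication within that common pool automatically covers all the ``and/or'' combinations.

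First I would dispose of the easy implications. Setting $Y=0$ in (iii') and using (iv) (so that $\E_t(0)=0$, as $0\in\H_t$) collapses it to $\E_t(I_A X)=I_A\E_t(X)$, which is exactly (iii). Conversely, given (iii), for any $A\in\F_t$ one has $I_A\bigl(I_A X+I_{A^c}Y\bigr)=I_A X$ and $I_{A^c}\bigl(I_A X+I_{A^c}Y\bigr)=I_{A^c}Y$, so applying (iii) to $Z:=I_A X+I_{A^c}Y$ yields the two localisations $I_A\E_t(Z)=\E_t(I_A Z)=I_A\E_t(X)$ and $I_{A^c}\E_t(Z)=I_{A^c}\E_t(Y)$; summing them recovers (iii') because $\E_t(Z)=(I_A+I_{A^c})\E_t(Z)$. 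For the scaling axioms, (vi) trivially implies (vi'): when $\lambda\geq 0$ we have $\lambda^+=\lambda$ and $\lambda^-=0$.

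The main step is therefore (vi') $\Rightarrow$ (vi), which I would prove using the localisation property in its (iii') form (available since (iii) and (iii') are now known to be equivalent). Given $\lambda\in\H_t$ with $\lambda Y\in\H$, I would localise on $A:=\{\lambda\geq 0\}\in\F_t$ and write $\lambda=\lambda^+-\lambda^-$. The key pointwise identities are $\lambda^+ Y=I_A(\lambda Y)$ and $\lambda^-(-Y)=I_{A^c}(\lambda Y)$, whence $\lambda Y=I_A(\lambda^+ Y)+I_{A^c}\bigl(\lambda^-(-Y)\bigr)$. Before invoking the axioms I would check the routine membership conditions: $\lambda^{\pm}=\tfrac12(|\lambda|\pm\lambda)\in\H_t$ since $\H$ is linear and closed under $|\cdot|$, while $\lambda^+ Y$ and $\lambda^-(-Y)$ lie in $\H$ because they are indicator multiples of $\lambda Y\in\H$, using the closure property in Definition \ref{defn:Hdefn}. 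Applying (iii') to this decomposition and then (vi') to each nonnegative scaling gives
\[
\E_t(\lambda Y)=I_A\E_t(\lambda^+ Y)+I_{A^c}\E_t\bigl(\lambda^-(-Y)\bigr)=I_A\lambda^+\E_t(Y)+I_{A^c}\lambda^-\E_t(-Y),
\]
and since $I_A\lambda^+=\lambda^+$ and $I_{A^c}\lambda^-=\lambda^-$ this is precisely (vi).

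I expect the only genuine obstacle to be bookkeeping rather than conceptual: one must verify that each term is a well-defined element of $\H$ (so the axioms may legitimately be applied) and that the localisation identity is used in the correct (iii') form within the derivation of (vi). It is worth recording that subadditivity (v) is never invoked, so these substitutions are valid purely at the level of the localisation and positive-scaling axioms.
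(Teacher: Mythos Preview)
Your proposal is correct and follows essentially the same logical skeleton as the paper's proof, which merely sketches ``(vi) implies (vi'); given (vi'), (iii) and (iii') are equivalent; given (iii'), (vi') implies (vi).'' You have filled in the details faithfully, with the minor variation that you establish (iii) $\Leftrightarrow$ (iii') directly from (iv) and the indicator-closure of $\H$, without appealing to (vi'); this is a slight strengthening but not a different method.
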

\begin{proof}
 Clearly (vi) implies (vi'). Given (vi'), it is easy to show that (iii) and (iii') are equivalent. Given (iii'), it is easy to show that (vi') implies (vi).
\end{proof}

\subsection{Basic Properties}
\begin{lemma}[From {\cite[Proposition 3.6]{Peng2010}}]
 Let $X, Y \in \H$ be such that $\E_t(Y)=-\E_t(-Y)$. Then for any
$\alpha\in\H_t$ such that $\alpha Y \in \H$,
\[\E_t(X+\alpha Y) = \E_t(X)+\alpha\E_t(Y).\]
In particular, $\E_t(X+\alpha) = \E_t(X)+\alpha$ for any $\alpha \in \H_t$, and
if $\E_t(Y) = \E_t(-Y)=0$, then $\E_t(X+ Y) = \E_t(X)$.
\end{lemma}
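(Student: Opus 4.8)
The plan is to exploit subadditivity (property (v)) in both directions, using the hypothesis $\E_t(Y) = -\E_t(-Y)$ to pin down the terms $\E_t(\pm\alpha Y)$ exactly. In a sublinear theory one inequality is free; the content of the lemma is that the symmetry hypothesis upgrades it to an equality.

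First I would show that the scaling $\alpha Y$ inherits the symmetry of $Y$, in the precise sense that $\E_t(\alpha Y) = \alpha\E_t(Y)$ and $\E_t(-\alpha Y) = -\alpha\E_t(Y)$ for the given $\alpha \in \H_t$. This is where positive homogeneity (property (vi)) meets the hypothesis: writing $\alpha = \alpha^+ - \alpha^-$ and applying (vi),
\[\E_t(\alpha Y) = \alpha^+\E_t(Y) + \alpha^-\E_t(-Y) = (\alpha^+ - \alpha^-)\E_t(Y) = \alpha\E_t(Y),\]
where the middle equality substitutes $\E_t(-Y) = -\E_t(Y)$. The identical computation applied to $-\alpha$ gives $\E_t(-\alpha Y) = -\alpha\E_t(Y)$. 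Throughout, $\alpha Y, -\alpha Y \in \H$ since $\H$ is a linear space and $\alpha Y \in \H$ by assumption, so (vi) does apply.

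With these two identities in hand the result follows by sandwiching. Subadditivity applied directly to $X + \alpha Y$ yields the upper bound $\E_t(X + \alpha Y) \leq \E_t(X) + \E_t(\alpha Y) = \E_t(X) + \alpha\E_t(Y)$. For the matching lower bound, the key manipulation is to write $X = (X + \alpha Y) + (-\alpha Y)$ and apply subadditivity a second time: $\E_t(X) \leq \E_t(X + \alpha Y) + \E_t(-\alpha Y) = \E_t(X + \alpha Y) - \alpha\E_t(Y)$, which rearranges to $\E_t(X + \alpha Y) \geq \E_t(X) + \alpha\E_t(Y)$. Combining the two inequalities gives the asserted equality.

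The two stated specialisations are then immediate. Taking $Y \equiv 1$, which lies in $\H_t$ and satisfies $\E_t(1) = 1 = -\E_t(-1)$ by property (iv), recovers $\E_t(X + \alpha) = \E_t(X) + \alpha$; and if $\E_t(Y) = \E_t(-Y) = 0$ then the hypothesis holds with $\E_t(Y) = 0$, so the case $\alpha = 1$ gives $\E_t(X + Y) = \E_t(X)$. I do not anticipate a genuine obstacle: the only real idea is the decomposition $X = (X + \alpha Y) - \alpha Y$ used to reverse the trivial inequality. The one point requiring care is that the symmetry of $Y$ propagates to $\alpha Y$ precisely because positive homogeneity treats $\alpha^+$ and $\alpha^-$ separately — which is exactly why the hypothesis is phrased as $\E_t(Y) = -\E_t(-Y)$ rather than as any one-sided estimate.
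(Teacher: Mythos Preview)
Your proof is correct and follows essentially the same route as the paper: first use property (vi) together with the hypothesis $\E_t(-Y)=-\E_t(Y)$ to obtain $\E_t(\pm\alpha Y)=\pm\alpha\E_t(Y)$, then sandwich via subadditivity applied to $X+\alpha Y$ and to $X=(X+\alpha Y)+(-\alpha Y)$. The paper compresses this into the single chain $\E_t(X+\alpha Y) \leq \E_t(X)+\alpha\E_t(Y)=\E_t(X)-\E_t(-\alpha Y)\leq \E_t(X+\alpha Y)$, but the content is identical.
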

\begin{proof}
We have
\[\E_t(\alpha Y) = \alpha^+ \E_t(Y) + \alpha^{-}\E_t(-Y) = \alpha^+ \E_t(Y) -
\alpha^{-}\E_t(Y) = \alpha \E_t(Y)\]
and hence
\[\E_t(X+\alpha Y) \leq \E_t(X)+\E_t(\alpha Y)= \E_t(X)+\alpha\E_t(
Y)=\E_t(X)-\E_t(-\alpha Y)\leq \E_t(X+\alpha Y).\]
\end{proof}

\begin{lemma}[Jensen's inequality]\label{lem:jensen}
 For any convex function $\phi:\mathbb{R}\to \mathbb{R}$, any $t$, if $X$ and
$\phi(X)$
are both in $\H$, then
\[\E_t(\phi (X)) \geq \phi(\E_t(X)).\]
\end{lemma}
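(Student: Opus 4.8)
The plan is to use the classical fact that any finite convex function $\phi:\R\to\R$ is the pointwise supremum of a countable family of affine minorants, namely $\phi(x)=\sup_{n}(a_n x+b_n)$ for suitable real constants $a_n,b_n$, with $\phi(x)\geq a_n x+b_n$ for every $x$ and every $n$. The advantage of taking a fixed \emph{countable} family of affine functions with constant coefficients (rather than selecting a supporting line at the random point $\E_t(X)$) is that it sidesteps any measurable-selection argument and keeps every intermediate quantity inside $\H$: since $a_n,b_n\in\R$ and $X\in\H$, we have $a_nX+b_n\in\H$ because $\H$ is a linear space containing the constants.

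First I would fix $n$ and apply monotonicity (property (i)) to the pointwise inequality $\phi(X)\geq a_nX+b_n$, giving $\E_t(\phi(X))\geq \E_t(a_nX+b_n)$. Using the previous lemma to pull out the additive constant, $\E_t(a_nX+b_n)=\E_t(a_nX)+b_n$. Next I would establish the one-sided homogeneity bound $\E_t(a_nX)\geq a_n\E_t(X)$: by property (vi) we have $\E_t(a_nX)=a_n^+\E_t(X)+a_n^-\E_t(-X)$, while subadditivity (v) together with $\E_t(0)=0$ gives $\E_t(-X)\geq-\E_t(X)$; multiplying the latter by $a_n^-\geq0$ and recombining yields $\E_t(a_nX)\geq a_n^+\E_t(X)-a_n^-\E_t(X)=a_n\E_t(X)$. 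Combining these steps gives $\E_t(\phi(X))\geq a_n\E_t(X)+b_n$ for every $n$.

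Finally, because the left-hand side $\E_t(\phi(X))$ does not depend on $n$, I would take the pointwise supremum over the countable index set on the right. For each $\omega$ the real numbers satisfy $\E_t(\phi(X))(\omega)\geq a_n\E_t(X)(\omega)+b_n$ simultaneously for all $n$, so $\E_t(\phi(X))(\omega)\geq\sup_n\big(a_n\E_t(X)(\omega)+b_n\big)=\phi\big(\E_t(X)(\omega)\big)$, which is precisely $\E_t(\phi(X))\geq\phi(\E_t(X))$.

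The only delicate point is this last step: one must check that a single inequality between the fixed $\F_t$-measurable function $\E_t(\phi(X))$ and each affine expression suffices to conclude the inequality against their supremum. This is immediate here because the family is countable and each inequality holds everywhere on $\Omega$ (there are no exceptional null sets to union away), so the supremum can be taken $\omega$-by-$\omega$ with no measurability or limiting obstruction. I expect the only other thing requiring care to be the mild bookkeeping around property (vi) and the sign of $a_n$.
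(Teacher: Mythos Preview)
Your proof is correct and follows essentially the same route as the paper: represent $\phi$ as a supremum of affine functions $a_nx+b_n$, use monotonicity to pass $\E_t$ inside the inequality $\phi(X)\geq a_nX+b_n$, pull out the additive constant, handle the sign of $a_n$ via sublinearity to get $\E_t(a_nX)\geq a_n\E_t(X)$, and then take the supremum over $n$. The paper's version is slightly terser (it splits directly into the cases $a\geq 0$ and $a<0$ rather than writing $a_n^+,a_n^-$ via property (vi)), and it does not pause over countability, but the logic is the same.
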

\begin{proof}
 First note that, $\E_t(aX)=a\E_t(X)$ for $a\geq 0$, and, by
sublinearity,
\[\E_t(aX) = |a|\E_t(-X) \geq -|a|\E_t(X) = a\E_t(X)\] for
$a<0$. As
$\phi$ is convex,
it has a representation $\phi(x) = \sup_i\{a_i x + b_i\}$ for some coefficients
$a_i, b_i$. Hence, by monotonicity
\[\E_t(\phi(X)) = \E_t(\sup_i\{a_i X + b_i\}) \geq \sup_i \E_t(a_i X +
b_i)\]
and, for each $i$,
\[\E_t(a_i X + b_i) =\E_t(a_i X) + b_i \geq a_i \E_t(X)+b_i.\]
The result follows.
\end{proof}

\begin{theorem}[See {\cite[Theorem 2.1]{Peng2010}}]
A $\SL$-expectation has a representation
\begin{equation}\label{eq:supremumrepresentation}
\mathcal{E}(Y) = \sup_{\theta\in\Theta} E_\theta[Y]
\end{equation}
where $\Theta$ is a collection of ($\sigma$-additive) probability measures on
$\Omega$.
\end{theorem}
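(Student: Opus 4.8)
The plan is to combine a Hahn--Banach dual representation with the Daniell--Stone theorem, using the monotone continuity property (vii) precisely to upgrade finite additivity to countable additivity.

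First I would work with $\E := \E_0$, which by assumption is a real-valued sublinear functional on the linear space $\H$: it is subadditive, positively homogeneous (using (vi') from the preceding lemma), monotone, and normalised with $\E(c)=c$ for constants $c$. The starting point is the elementary consequence of Hahn--Banach that any sublinear functional is the upper envelope of the linear functionals it dominates; concretely, for each fixed $Y_0\in\H$ there is a linear functional $\ell$ on $\H$ with $\ell\le\E$ pointwise and $\ell(Y_0)=\E(Y_0)$, so that
\[\E(Y)=\sup\{\ell(Y):\ell\text{ linear on }\H,\ \ell\le\E\}.\]

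Next I would identify each dominated linear functional $\ell$ as the integral against a finitely additive probability. Monotonicity of $\E$ with $\E(0)=0$ gives, for $X\ge 0$, that $\ell(-X)\le\E(-X)\le 0$, hence $\ell(X)\ge 0$, so $\ell$ is positive; and $\ell(1)\le\E(1)=1$ together with $-\ell(1)=\ell(-1)\le\E(-1)=-1$ forces $\ell(1)=1$, so $\ell$ is normalised. Thus each $\ell$ is a positive, unit-mass linear functional on the Stone vector lattice $\H$, which is a lattice because $X\in\H\Rightarrow|X|\in\H$ and contains the constants, so that $X\wedge 1\in\H$.

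The decisive step is countable additivity, and this is where the monotone continuity property enters. For any sequence $X_i\downarrow 0$ pointwise we have $0\le\ell(X_i)\le\E(X_i)\to 0$, so every dominated functional $\ell$ inherits the Fatou property. A positive linear functional on a Stone lattice that is continuous along sequences decreasing to zero is, by the Daniell--Stone representation theorem, the integral against a $\sigma$-additive measure on the $\sigma$-algebra generated by $\H$; normalisation makes this a probability measure $\theta$, so that $\ell(Y)=E_\theta[Y]$. Taking $\Theta$ to be the family of all measures arising in this way and combining with the Hahn--Banach envelope yields $\E(Y)=\sup_{\theta\in\Theta}E_\theta[Y]$, with the supremum attained for each $Y$. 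The main obstacle is precisely this last step: the Hahn--Banach argument on its own delivers only finitely additive set functions, and without further input there is no reason for $\sigma$-additivity. Assumption (vii) is exactly the hypothesis of Daniell--Stone, so the remaining care is in verifying that $\H$ genuinely satisfies the Stone lattice conditions and that the resulting measure can be taken on $\F$ rather than merely on $\sigma(\H)$, extending it if necessary.
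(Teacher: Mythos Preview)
The paper does not supply its own proof of this theorem; it simply cites \cite[Theorem~2.1]{Peng2010} and moves on. Your proposal is correct and is precisely the standard argument behind that cited result: Hahn--Banach to write $\E$ as the upper envelope of dominated linear functionals, positivity and normalisation to identify each such functional as a finitely additive probability, and then the monotone continuity property (vii) to invoke Daniell--Stone and upgrade to $\sigma$-additivity. This is essentially Peng's proof.

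One small remark on your closing caveat about $\sigma(\H)$ versus $\F$: in this paper the worry evaporates, because Definition~\ref{defn:Hdefn} requires $I_A X\in\H$ for every $A\in\F$ and $\H$ contains the constants, so $I_A\in\H$ for all $A\in\F$ and hence $\sigma(\H)=\F$. No extension is needed.
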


Note that not all functionals with a representation of this form are $\SL$-expectations. This is
due to the requirements for recursivity in the definition of a $\F_t$-consistent
nonlinear expectation. Conditions such that the converse statement is true can
be found in Artzner et al. \cite{Artzner2007}.

\begin{definition} To use the language of capacity theory, we say that a
statement holds quasi-surely (q.s.) if it holds except on a set $N$ with
$\E(I_N)=0$, or, equivalently, if it holds $\theta$-a.s. for all
$\theta\in\Theta$. Such a set $N$ is called a polar set.
\end{definition}

The following results are natural generalisations of the classical ones.
\begin{lemma}[Semi-Strict Monotonicity] \label{lem:strictmonotone}
 Suppose $X\geq Y$ and $\E(X-Y)=0$. Then $X=Y$ q.s.
\end{lemma}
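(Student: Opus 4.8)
The plan is to reduce the statement to showing that the set $\{X\neq Y\}$ is polar. Writing $Z:=X-Y$, the hypotheses say $Z\in\H$ (since $\H$ is linear), $Z\geq 0$, and $\E(Z)=0$; because $X\geq Y$ we have $\{X\neq Y\}=\{Z>0\}$, so it suffices to prove $\E(I_{\{Z>0\}})=0$, which is exactly the assertion that $X=Y$ q.s.

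First I would slice $\{Z>0\}$ into the level sets $A_n:=\{Z>1/n\}$. Each $A_n\in\F$, so $I_{A_n}\in\H$ (apply the closure $X\mapsto I_A X$ to the constant $1$), and the pointwise bound $Z\geq n^{-1}I_{A_n}$ together with monotonicity (i) and positive homogeneity (vi$'$) gives $0=\E(Z)\geq n^{-1}\E(I_{A_n})$; since monotonicity against the constant $0$ also yields $\E(I_{A_n})\geq\E(0)=0$, each level set is polar, i.e.\ $\E(I_{A_n})=0$. Next, because $A_n\uparrow\{Z>0\}$, the functions $I_{\{Z>0\}}-I_{A_n}$ decrease pointwise to $0$, so the monotone continuity property (vii) gives $\E(I_{\{Z>0\}}-I_{A_n})\to 0$. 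Finally, subadditivity (v) yields
\[\E(I_{\{Z>0\}})\leq \E(I_{A_n})+\E(I_{\{Z>0\}}-I_{A_n})=\E(I_{\{Z>0\}}-I_{A_n}),\]
and letting $n\to\infty$ the right-hand side tends to $0$, so $\E(I_{\{Z>0\}})\leq 0$; combined with $\E(I_{\{Z>0\}})\geq 0$ this forces $\E(I_{\{Z>0\}})=0$, as required.

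The step I expect to be the main obstacle is the passage from ``each $A_n$ is polar'' to ``$\{Z>0\}$ is polar.'' Classically this is immediate from countable additivity, but here there is no dominating measure to add over, so countable additivity is unavailable; its role is taken over entirely by the monotone continuity (Fatou) property (vii), applied to the decreasing sequence $I_{\{Z>0\}}-I_{A_n}\downarrow 0$, with subadditivity used to control $\E(I_{\{Z>0\}})$ from above. The points needing care are that all the functions involved genuinely lie in $\H$ and that the convergence to $0$ is pointwise everywhere (not merely q.s.), as (vii) demands. As a shortcut one may instead invoke the representation \eqref{eq:supremumrepresentation}: from $\sup_{\theta\in\Theta}E_\theta[Z]=0$ and $Z\geq 0$ we get $E_\theta[Z]=0$, hence $Z=0$ $\theta$-a.s., for every $\theta\in\Theta$, which by the definition of quasi-sure is precisely $X=Y$ q.s.; this avoids the limiting argument at the cost of using the representation theorem.
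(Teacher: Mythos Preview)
Your argument is correct and matches the paper's proof almost exactly: the same level sets $A_n=\{X-Y>1/n\}$ are shown to be polar via $0=\E(X-Y)\geq n^{-1}\E(I_{A_n})\geq 0$. The only difference is in the passage to the union $\{X>Y\}=\bigcup_n A_n$. The paper takes precisely what you call the ``shortcut'': it invokes the representation $\E(\cdot)=\sup_{\theta\in\Theta}E_\theta[\cdot]$, so that each $A_n$ is $\theta$-null for every $\theta$, and then uses classical countable additivity under each fixed $\theta$ to conclude that the union is $\theta$-null for all $\theta$. Your primary route---applying the monotone continuity axiom (vii) to $I_{\{Z>0\}}-I_{A_n}\downarrow 0$ and then subadditivity---is a valid alternative that stays entirely within the axiomatic framework and avoids the representation theorem; the paper's route is slightly quicker but relies on that extra structural result.
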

\begin{proof}
 Let $A_n=\{\omega: X >Y+n^{-1}\}$ for $n\in \mathbb{N}$. Then
\[0 = \E(X-Y) \geq \E(I_{A_n}(X-Y)) \geq n^{-1} \E(I_{A_n})\geq
0\]
and therefore $\E(I_{A_n})=0$, that is, $A_n$ is a polar set.
We can see that $\{\omega: X>Y\} = \bigcup_n A_n$ is the countable union
of $\theta$-null sets, hence $\{\omega: X>Y\}$ is also $\theta$-null, for all
$\theta\in \Theta.$ Hence $X=Y$ q.s.
\end{proof}

\begin{lemma}
 For any $X, Y \in \H$,
\[\E(X)-\E(Y) \leq \E(X-Y) \leq \E(X)+\E(-Y)\]
and hence
\[\E(X+Y) =\E(X-(-Y)) \geq \E(X)-\E(-Y).\]
In particular, note that $\E(Y) \geq -\E(-Y)$ for all $Y$.
\end{lemma}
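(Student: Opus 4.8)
The plan is to prove the three inequalities by repeated use of sublinearity (property (v)) together with the positive-homogeneity relation $\E(-Y)=\E(-Y)$ and the basic fact that $\E$ applied to a sum splits as a subadditive bound. The key observation is that all three claims are formal consequences of subadditivity applied to cleverly chosen decompositions of the argument.

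\medskip

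For the left-hand inequality $\E(X)-\E(Y)\leq\E(X-Y)$, I would write $X = (X-Y)+Y$ and apply subadditivity to get $\E(X)\leq\E(X-Y)+\E(Y)$; rearranging gives the claim (this uses $\E(Y)\in\R$, i.e.\ finiteness, which is guaranteed since $X,Y\in\H$ are real-valued and $\E$ maps into $\H_0=\R$). For the right-hand inequality $\E(X-Y)\leq\E(X)+\E(-Y)$, I would write $X-Y = X+(-Y)$ and apply subadditivity directly, noting $\E(-Y)$ is exactly the term appearing on the right. The second displayed inequality, $\E(X+Y)\geq\E(X)-\E(-Y)$, follows by substituting $-Y$ for $Y$ in the already-established left inequality: $\E(X)-\E(-Y)\leq\E(X-(-Y))=\E(X+Y)$. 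Finally, the concluding remark $\E(Y)\geq-\E(-Y)$ is the special case $X=0$ (equivalently, take $X=Y$ and use $\E(0)=0$, which holds by positive homogeneity), giving $0=\E(0)\leq\E(Y)+\E(-Y)$.

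\medskip

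I do not expect any genuine obstacle here, as every step is a one-line application of axiom (v). The only point requiring a moment's care is ensuring that the subtraction and rearrangement are legitimate, i.e.\ that no term is $\pm\infty$; since $\H$ consists of $\R$-valued functions and $\E$ takes values in $\H_0\cong\R$, all quantities are finite real numbers and the arithmetic manipulations are unconditionally valid. One should also confirm that $\E(0)=0$, which is immediate from property (vi) with $\lambda=0$, or from (iv) applied to the constant $0\in\H_0$.
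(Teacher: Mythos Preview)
Your proof is correct and is precisely the intended argument; the paper itself simply writes ``Simple application of sublinearity'' without spelling out the decompositions, and your proposal fills in exactly those details.
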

\begin{proof}
 Simple application of sublinearity.
\end{proof}

\subsection{Extension of $\H$}

\begin{definition}
 For any pair $(\H, \E)$, where $\H$ satisfies Definition \ref{defn:Hdefn} and $\E$ is a $\SL$-expectation on $\H$, we can consistently extend our space to $(\H^{\ext}, \E^{\ext})$, where
\[\begin{split}
   \H^{\ext}&:=\left\{X\in m\F: \min\left\{E_\theta[X^+], E_\theta[X^-]\right\}<\infty \text{ for all }\theta\in\Theta\right\},\\
\E^{\ext}(X) &:= \sup_{\theta\in\Theta} E_\theta[X].
  \end{split}
\]

In particular, note that $\H\subseteq\H^{\ext}$ and $\E^{\ext}|_{\H} \equiv \E$, Jensen's inequality holds for $\E^{\ext}$, and $\E^{\ext}$ has the same representation as $\E$.
\end{definition}

\begin{remark}
The key purpose of this Definition is to allow us to consider the limits of sequences in our space $\H$, which will often lie in $\H^\ext$, but may not lie in $\H$. (For example, the limits of sequences of nonnegative random variables in $\H$ are guaranteed to lie in $\H^\ext$.) Using this extended definition, writing $\E^\ext(\lim X_n)$ has a meaning.

If $\H=\H^{\ext}$ \emph{ab initio}, then this poses no problem. However, in some applications it may be undesirable to directly define $\E$ over the larger space $\H^{\ext}$. For example, if $\H$ is the $L^2$ space under a probability measure and $\E$ is defined using the solutions of a classical BSDE. In these cases, this extension of $\E$ to $\E^{\ext}$ is convenient, and many of the properties of $\E$ carry through to $\E^{\ext}$ directly.

Note, however, that we do not require the use of dynamic expectation operators $\E^{\ext}_t$.
\end{remark}

The following two results depend on the extension of $\H$ to $\H^{\ext}$ only to guarantee that expectations can be taken of the required limits. When it can be assumed that all our sequences and their limits are in $\H$, the result can be taken with $\E^{\ext}$ replaced by $\E$. 

\begin{theorem}[Monotone Convergence Theorem]\label{thm:monconverge}
 Let $X_n$ be a nonnegative sequence in $\H^{\ext}$ increasing pointwise to $X$. Then
$\E^{\ext}(X_n)\uparrow \E^{\ext}(X)$.
\end{theorem}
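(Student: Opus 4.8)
The plan is to reduce everything to the classical Monotone Convergence Theorem applied one measure at a time, and then to interchange a supremum over $\theta$ with a supremum over $n$. First I would check that the statement is well-posed: since $0\leq X_n\uparrow X$ pointwise we have $X\geq 0$, so $X^-=0$ and $E_\theta[X^-]=0<\infty$ for every $\theta\in\Theta$, whence $X\in\H^{\ext}$ and $\E^{\ext}(X)=\sup_{\theta\in\Theta}E_\theta[X]$ is meaningful.

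For monotonicity and the easy inequality, observe that $X_n\leq X_{n+1}\leq X$, so monotonicity of each classical expectation gives $E_\theta[X_n]\leq E_\theta[X_{n+1}]\leq E_\theta[X]$ for every $\theta$. Taking suprema over $\theta\in\Theta$ shows that $\E^{\ext}(X_n)$ is nondecreasing and bounded above by $\E^{\ext}(X)$; hence the limit $\lim_n\E^{\ext}(X_n)=\sup_n\E^{\ext}(X_n)$ exists in $[0,\infty]$ and satisfies $\lim_n\E^{\ext}(X_n)\leq\E^{\ext}(X)$.

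The substance is the reverse inequality. Here I would fix $\theta\in\Theta$ and apply the \emph{classical} Monotone Convergence Theorem to the nonnegative increasing sequence $X_n$ under the $\sigma$-additive measure $\theta$, obtaining $E_\theta[X_n]\uparrow E_\theta[X]$. The result then follows from the elementary identity $\sup_n\sup_\theta=\sup_\theta\sup_n$ for a doubly indexed family: writing it out,
\[
\lim_n\E^{\ext}(X_n)=\sup_n\sup_{\theta\in\Theta}E_\theta[X_n]
=\sup_{\theta\in\Theta}\sup_n E_\theta[X_n]
=\sup_{\theta\in\Theta}E_\theta[X]=\E^{\ext}(X),
\]
where the third equality uses that $\sup_n E_\theta[X_n]=\lim_n E_\theta[X_n]=E_\theta[X]$ by monotonicity together with the classical theorem.

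I do not anticipate a genuine obstacle: the $\sigma$-additivity of each $\theta$ supplied by the representation Theorem is exactly what makes the classical Monotone Convergence Theorem available, and the interchange of the two suprema is valid without any hypothesis, since it merely reorders a supremum taken over the product index set. The only points worth stating carefully are the well-posedness check that $X\in\H^{\ext}$ and the observation that for a monotone sequence the limit coincides with the supremum; neither of these requires any estimate.
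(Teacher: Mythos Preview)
Your proof is correct and follows exactly the paper's approach: apply the classical Monotone Convergence Theorem under each $\theta\in\Theta$ and then interchange $\sup_n$ with $\sup_\theta$. The only difference is that you spell out the well-posedness check $X\in\H^{\ext}$ and the easy monotone inequality, which the paper leaves implicit.
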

\begin{proof}
 From the classical monotone convergence theorem, and the representation
(\ref{eq:supremumrepresentation}),
\[\E^{\ext}(X) = \sup_\theta E_\theta[X] = \sup_\theta \sup_n E_\theta[X_n] =
\sup_n \sup_\theta E_\theta[X_n] = \sup_n \E^{\ext}(X_n).\]
\end{proof}

\begin{lemma}[Fatou Lemma]\label{lem:Fatou}
 For any sequence  of nonnegative random variables $X_n\in\H^{\ext}$, we have $\E^{\ext}(\lim\inf X_n)\leq \lim\inf\E^{\ext}(X_n)$.
\end{lemma}
\begin{proof}
 Define $Y_n = \inf_{k\geq n} X_k$. Then $Y_n$ is a monotone sequence converging to $\lim\inf X_n$, and by the monotone convergence theorem
\[\E^{\ext}(\lim\inf_n X_n) = \lim_n \E^{\ext}(Y_n) = \lim_n \E^{\ext}(\inf_{k\geq n} X_k) \leq \lim_n \inf_{k\geq n} \E^{\ext}(X_k).\]
\end{proof}

\subsection{Conditioning on stopping times}
\begin{definition}
 For a stopping time $T$, we can define the expectation conditional on the
$\sigma$-algebra $\F_T$ by
\[\E_T(\cdot;\omega) := \E_{T(\omega)}(\cdot;\omega).\]
with the formal equivalence $\E_\infty(X) = X$.
\end{definition}

This nonstandard definition is necessary, as there is, in general, no way to
condition a nonlinear expectation on an arbitrary $\sigma$-algebra and retain
recursivity (except if the nonlinear expectation is of a very special type, see
\cite{Cohen2010a} for some related results in this area). However, as we are in
discrete time, the above definition does not introduce measurability problems, and we have the following
natural result.

\begin{lemma} \label{lem:boundedstoppingconsistent}
 For any bounded stopping time $T$, $\E(\E_T(\cdot))\equiv \E(\cdot)$.
\end{lemma}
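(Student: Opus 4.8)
The plan is to prove the identity by induction on the (deterministic) bound $N$, i.e.\ the smallest integer with $T\leq N$. The preliminary step, valid for any $N$, is to rewrite the pointwise definition of $\E_T$ as a finite sum: since $T$ takes only the values $0,1,\dots,N$ and $\{T=t\}\in\F_t$, property (iii) gives
\[
\E_T(X)=\sum_{t=0}^{N} I_{\{T=t\}}\E_t(X).
\]
This lies in $\H$, because each $\E_t(X)\in\H_t\subseteq\H$ and $\H$ is closed under multiplication by indicators and under finite sums, so the outer expectation $\E(\E_T(X))$ is well defined. The base case $N=0$ forces $T\equiv 0$, whence $\E_T(X)=\E(X)$ is constant and $\E(\E_T(X))=\E(X)$ by property (iv).

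For the inductive step I would compare $T$ with the truncated stopping time $S:=T\wedge(N-1)$, which is bounded by $N-1$. The crucial observation---and essentially the only place boundedness is used---is that, since $T\leq N$, the set $B:=\{T=N\}=\{T\leq N-1\}^c$ already belongs to $\F_{N-1}$. On $B^c$ the variable $\E_T(X)$ equals the $\F_{N-1}$-measurable quantity $A:=\sum_{t\leq N-1}I_{\{T=t\}}\E_t(X)$, while on $B$ it equals $\E_N(X)$; thus $\E_T(X)=I_{B^c}A+I_B\E_N(X)$. Applying property (iii$'$) across the $\F_{N-1}$-partition $\{B,B^c\}$, then the tower property (ii) in the form $\E_{N-1}(\E_N(X))=\E_{N-1}(X)$ and property (iv) to fix the $\F_{N-1}$-measurable part $A$, I would obtain
\[
\E_{N-1}\big(\E_T(X)\big)=I_B\,\E_{N-1}(X)+I_{B^c}A=\E_S(X),
\]
the final equality following because $S=N-1$ on $B$ and $S=T$ on $B^c$.

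To finish, I would apply the tower property (ii) with $s=0$ and $t=N-1$ to get $\E(\E_T(X))=\E\big(\E_{N-1}(\E_T(X))\big)=\E(\E_S(X))$, and then invoke the induction hypothesis for $S$ to conclude $\E(\E_S(X))=\E(X)$.

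The main obstacle is the inductive step, and precisely the identity $\E_{N-1}(\E_T(X))=\E_S(X)$: because a sublinear expectation does not distribute over sums, one cannot expand $\E\big(\sum_t I_{\{T=t\}}\E_t(X)\big)$ termwise, so the argument must instead exploit that the top level set $\{T=N\}$ is already $\F_{N-1}$-measurable, which is what lets property (iii$'$) and the tower property be applied cleanly. I would also take care to verify at each stage that the quantities ($\E_T(X)$, $A$, $\E_S(X)$) remain in $\H$, so that every application of $\E_t$ is legitimate.
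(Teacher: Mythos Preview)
Your proposal is correct and follows essentially the same route as the paper: both arguments hinge on the one-step identity $\E_{N-1}(\E_{T\wedge N}(X))=\E_{T\wedge(N-1)}(X)$, established by noting that $\{T=N\}=\{T\leq N-1\}^c\in\F_{N-1}$ and applying property (iii$'$) together with the tower property. The only cosmetic difference is that the paper presents this as an explicit downward iteration $\E_0(\E_{T\wedge n}(X))=\E_0(\E_{T\wedge(n-1)}(X))=\cdots$, whereas you package it as a formal induction on the bound $N$.
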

\begin{proof}
Partition $\Omega$ into the
$\F$-measurable sets $A_n:=\{T=n\}$, for
$n=0,1,...$. Define $B_n = \bigcup_{k\geq n} A_k$. Then, for any $n$, any $X\in
H$,
\[\E_{T\wedge n}(X) = \sum_{k=0}^{n-1} I_{A_k} \E_k(X) + I_{B_n} \E_n(X).\]
Hence, as $B_n$ is $\F_{n-1}$-measurable,
\[\begin{split}
   \E_{n-1}(\E_{T\wedge n}(X)) &= \sum_{k=0}^{n-1} I_{A_k} \E_k(X) +
I_{B_n} \E_{n-1}(\E_n(X))\\
&=\sum_{k=0}^{n-2} I_{A_k} \E_k(X) +
I_{B_{n-1}} \E_{n-1}(X)\\
&= \E_{T\wedge(n-1)}(X)
  \end{split}
\]
Beginning with an upper bound for $T$ and iterating, we see that
\[\E_0(X) = \E_{T\wedge 0} (X) = \E_0(\E_{T\wedge1}(X)) =
\E_0(\E_1(\E_{T\wedge2}(X)))=... = \E_0(\E_T(X)).\]
\end{proof}

For $X$ satisfying a particular integrability property, we shall extend this result to unbounded stopping times, using properties of martingale convergence. See Theorem \ref{thm:recursivityatstoppingtimes}.

\section{Function spaces under $\E$}

\subsection{$L^p$ spaces}
As we no longer have a linear integral, we need to define the appropriate
analogue of the classical $L^p$ spaces.

\begin{definition}\label{defn:Lpdefn}
For $p\in[1,\infty[$, The map
\[\|\cdot\|_p:X\mapsto (\E(|X|^p))^{1/p}\]
forms a seminorm on $\H$. Similarly for $p=\infty$, where
\[\|\cdot\|_{\infty}:X\mapsto \inf_{x\in\mathbb{R}}\{x:\E(I_{\{X>x\}})=0\}.\]

We define the space $\mathcal{L}^p(\F)$ as the completion under $\|\cdot\|_p$ of the set
\[\{X\in \H:\|X\|_p<\infty\}\]
and then $L^p(\F)$ as the equivalence classes of $\mathcal{L}^p$ modulo equality
in $\|\cdot\|_p$.

Similarly, we can define $L^p(\F_t) = L^p(\F)\cap m\F_t$.
\end{definition}

\begin{remark}
 Note that, as we have defined $\mathcal{L}^p$ as the \emph{completion} of $\H$ under $\|\cdot\|_p$, we are permitted to take limits in $L^p$. Note also that $\mathcal{L}^p\subset\H^\ext$. The following lemma proves that $\E_t(X)$ is well defined for $X\in L^p$.
\end{remark}


\begin{lemma}\label{lem:convergenceofconditionalsinL1}
 Let $X_n$ be a sequence converging to $X$ in $L^p$, for $p\in [1,\infty]$. Then, for any $t$,
$\E_t(X_n)$ is convergent in $L^p$, and we can uniquely define $\E_t(X) := \lim \E_t(X_n)$. (If necessary, we can denote this extension $\E^{L^p}$, however we shall typically write $\E$ for simplicity.)
\end{lemma}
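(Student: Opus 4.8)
The plan is to show that each $\E_t$ is \emph{nonexpansive} with respect to the seminorm $\|\cdot\|_p$, i.e. that $\|\E_t(X)-\E_t(Y)\|_p \leq \|X-Y\|_p$ for $X,Y\in\H$ of finite $p$-seminorm; once this contraction property is in hand, the lemma follows from the completeness of $L^p$ together with a standard density argument. First I would take the approximating sequence inside $\H$: since $\mathcal{L}^p$ is by definition the completion of $\{X\in\H:\|X\|_p<\infty\}$, it suffices to establish the claim for $X_n$ in this set, and the assertion for an arbitrary $L^p$-convergent sequence then follows by continuity of the resulting extension.

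The key pointwise estimate is $|\E_t(X)-\E_t(Y)| \leq \E_t(|X-Y|)$. To obtain it, write $X=(X-Y)+Y$ and use sublinearity (property (v)) to get $\E_t(X)\leq\E_t(X-Y)+\E_t(Y)$, hence $\E_t(X)-\E_t(Y)\leq\E_t(X-Y)\leq\E_t(|X-Y|)$ by monotonicity (property (i)); interchanging the roles of $X$ and $Y$ gives the reverse bound, and the two together yield the claim.

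For $p\in[1,\infty[$ I would then raise this inequality to the $p$-th power (legitimate as both sides are nonnegative and $x\mapsto x^p$ is increasing on $[0,\infty[$), apply Jensen's inequality (Lemma \ref{lem:jensen}) with the convex function $\phi(x)=|x|^p$ to get $(\E_t(|X-Y|))^p\leq\E_t(|X-Y|^p)$, and finally apply $\E$ and the tower property (property (ii), which gives $\E(\E_t(\cdot))=\E(\cdot)$):
\[
\|\E_t(X)-\E_t(Y)\|_p^p = \E\bigl(|\E_t(X)-\E_t(Y)|^p\bigr) \leq \E\bigl(\E_t(|X-Y|^p)\bigr) = \E(|X-Y|^p) = \|X-Y\|_p^p.
\]
The case $p=\infty$ is treated separately: since $|X-Y|\leq\|X-Y\|_\infty$ q.s. and constants are fixed by $\E_t$ (property (iv)), monotonicity gives $\E_t(|X-Y|)\leq\|X-Y\|_\infty$ q.s., whence $|\E_t(X)-\E_t(Y)|\leq\|X-Y\|_\infty$ q.s. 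Taking $Y=0$ in all cases shows that $\E_t$ maps $\{X\in\H:\|X\|_p<\infty\}$ into itself, so the images indeed define elements of $L^p(\F_t)$.

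With the contraction property established, the conclusion is routine. If $X_n\to X$ in $L^p$ then $\{X_n\}$ is Cauchy, so $\{\E_t(X_n)\}$ is Cauchy by nonexpansiveness and therefore converges in the complete space $L^p(\F_t)$. If $\{X_n'\}$ is a second approximating sequence with the same limit, then $\|\E_t(X_n)-\E_t(X_n')\|_p\leq\|X_n-X_n'\|_p\to 0$, so the two limits coincide and $\E_t(X):=\lim\E_t(X_n)$ is well defined. I expect the only genuine work to be the contraction estimate, and within it the clean combination of Jensen's inequality with the tower property; the $p=\infty$ case and the finiteness/measurability bookkeeping (checking that $\E_t(X_n)$ really lands in $L^p(\F_t)$ and that approximants may be chosen inside $\H$) are the points to handle with a little care.
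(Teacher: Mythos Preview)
Your proposal is correct and follows essentially the same route as the paper: both establish the contraction estimate $\|\E_t(X)-\E_t(Y)\|_p\leq\|X-Y\|_p$ via the sublinearity bound $|\E_t(X)-\E_t(Y)|\leq\E_t(|X-Y|)$ combined with Jensen's inequality and the tower property (with the $p=\infty$ case handled separately), and then conclude by completeness and a uniqueness-of-limit argument. The paper's version is simply terser, compressing your two-step estimate into the single line $|\E_t(X_n)-\E_t(X_m)|^p\leq\E_t(|X_n-X_m|^p)$.
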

\begin{proof}
For $p<\infty$, note that by sublinearity and Jensen's inequality,
\[|\E_t(X_n)-\E_t(X_m)|^p\leq\E_t(|X_n-X_m|^p).\]
For $p=\infty$, note that
\[X_n \leq X_m + \|X_n-X_m\|_\infty \quad q.s.\]
Hence for any $p\in[1,\infty]$, we have $\|\E_t(X_n)-\E_t(X_m)\|_p \leq \|X_n-X_m\|_p$. In particular, $\E_t(X_n)$ is a Cauchy sequence in $L^p$. Hence it has a unique limit, which we call $\E^{L^p}_t(X)$. Comparing any two Cauchy sequences convergent to the same point, we see that $\E^{L^p}_t(X)$ is independent of the sequence chosen.
\end{proof}

\begin{lemma}\label{lem:consistentLpfromext}
 If $X\in L^p$ then $\E^\ext(X) = \E^{L^p}(X)$.
 \end{lemma}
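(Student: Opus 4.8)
The plan is to fix $X\in L^p$, choose a sequence $X_n\in\H$ with $\|X_n-X\|_p\to 0$, and recall from Lemma \ref{lem:convergenceofconditionalsinL1} (with $t=0$) that $\E^{L^p}(X)=\lim_n\E(X_n)=\lim_n\E^{\ext}(X_n)$, using $\E^{\ext}|_\H\equiv\E$. Since the representation $\E^{\ext}(\cdot)=\sup_{\theta\in\Theta}E_\theta[\cdot]$ makes the difference inequality $\E^{\ext}(X_n)-\E^{\ext}(X)\leq\E^{\ext}(X_n-X)$ and its mirror valid on $\H^{\ext}$, we have $|\E^{\ext}(X_n)-\E^{\ext}(X)|\leq\E^{\ext}(|X_n-X|)$. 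Hence it suffices to prove $\E^{\ext}(|X_n-X|)\to 0$: combining this with $\E^{\ext}(X_n)\to\E^{L^p}(X)$ forces $\E^{\ext}(X)=\E^{L^p}(X)$. The case $p=\infty$ is immediate, since $|X_n-X|\leq\|X_n-X\|_\infty$ q.s.\ gives $\E^{\ext}(|X_n-X|)\leq\|X_n-X\|_\infty\to 0$.

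For $p<\infty$ I would first reduce to a $p$-th power estimate via Jensen's inequality (Lemma \ref{lem:jensen} for $\E^{\ext}$) applied to $\phi(y)=y^p$ on $y\geq 0$, obtaining
\[\E^{\ext}(|X_n-X|)\leq\big(\E^{\ext}(|X_n-X|^p)\big)^{1/p}=\Big(\sup_{\theta\in\Theta}E_\theta[|X_n-X|^p]\Big)^{1/p}.\]
To control the right-hand side I want classical Fatou under each $\theta$, which requires $\theta$-a.s.\ convergence. I would therefore extract a subsequence $\{X_{n_k}\}$ with $\|X_{n_{k+1}}-X_{n_k}\|_p\leq 2^{-k}$ and set $S=\sum_k|X_{n_{k+1}}-X_{n_k}|$. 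By Minkowski's inequality in each $L^p(\theta)$ together with the Monotone Convergence Theorem (Theorem \ref{thm:monconverge}),
\[\big(\E^{\ext}(S^p)\big)^{1/p}=\sup_{\theta\in\Theta}\big(E_\theta[S^p]\big)^{1/p}\leq\sum_k\|X_{n_{k+1}}-X_{n_k}\|_p\leq 1<\infty,\]
so $S<\infty$ q.s.; thus the telescoping series converges q.s.\ and $X_{n_k}\to X$ q.s., identifying the completion limit with its $\H^{\ext}$-representative.

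With q.s.\ (hence $\theta$-a.s.) convergence in hand, classical Fatou gives, for each fixed $k$ and $\theta$, $E_\theta[|X_{n_k}-X|^p]\leq\liminf_j E_\theta[|X_{n_k}-X_{n_j}|^p]\leq\liminf_j\|X_{n_k}-X_{n_j}\|_p^p$; taking the supremum over $\theta$ and using $\sup_\theta\liminf_j\leq\liminf_j\sup_\theta$ yields $\E^{\ext}(|X_{n_k}-X|^p)\leq\liminf_j\|X_{n_k}-X_{n_j}\|_p^p$, which tends to $0$ as $k\to\infty$ since $\{X_n\}$ is Cauchy in $\|\cdot\|_p$. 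Feeding this back through Jensen shows $\E^{\ext}(|X_{n_k}-X|)\to 0$, so $\E^{\ext}(X_{n_k})\to\E^{\ext}(X)$; as $\E^{\ext}(X_{n_k})=\E(X_{n_k})\to\E^{L^p}(X)$, the two limits coincide and the result follows for the full sequence.

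The main obstacle is the subsequence extraction and, with it, the passage from $\|\cdot\|_p$-convergence (an abstract completion notion) to genuine q.s.\ pointwise convergence of a representative; everything else is bookkeeping. The one delicate interchange, pushing the supremum over $\Theta$ through the $\liminf$, works in the needed direction precisely because $\sup_\theta\liminf_j\leq\liminf_j\sup_\theta$ always holds, so no uniformity over $\Theta$ beyond the finiteness $\E^{\ext}(S^p)<\infty$ is required.
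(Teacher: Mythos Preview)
Your argument is correct and follows the same underlying idea as the paper's: both work measure-by-measure through the representation $\E^{\ext}(\cdot)=\sup_{\theta\in\Theta}E_\theta[\cdot]$ and pass to the limit. The paper, however, compresses the whole proof into a single sentence (``By the classical dominated convergence theorem, we can see that $\E^{L^p}(X)=\sup_{\theta\in\Theta}E_\theta[X]$''), implicitly relying on the remark that $\mathcal{L}^p\subset\H^{\ext}$ for the identification of the abstract completion limit with an actual measurable function.

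What you do differently is to make that identification explicit via the fast-Cauchy subsequence and the Borel--Cantelli-type bound on $S=\sum_k|X_{n_{k+1}}-X_{n_k}|$, and then to close the estimate with Fatou under each $\theta$ rather than dominated convergence. This is arguably the more honest route: a genuine $\theta$-uniform dominating function is not readily available here, whereas your Fatou step together with the inequality $\sup_\theta\liminf_j\leq\liminf_j\sup_\theta$ gives exactly the uniform-in-$\theta$ control needed. The paper's one-liner buys brevity; your version buys rigor, at the cost of a few extra lines.
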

 \begin{proof}
 By the classical dominated convergence theorem, we can see that $\E^{L^p}(X) = \sup_{\theta\in\Theta}E_\theta[X]$. The result follows.
\end{proof}

\begin{lemma}
 For any $1\leq p \leq q \leq \infty$, $L^q\subset L^p$.
\end{lemma}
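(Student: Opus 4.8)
The plan is to reduce everything to the single seminorm inequality $\|X\|_p\le\|X\|_q$, valid for every measurable $X$, and then transfer this inequality through the completion procedure that defines the $L^p$ spaces. Once the inequality is in hand, a $\|\cdot\|_q$-Cauchy sequence is automatically $\|\cdot\|_p$-Cauchy and its limits for the two seminorms coincide, which is exactly what is needed for the inclusion.

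For the inequality itself I would argue from Jensen, or equivalently from the sup-representation. Consider first the case $q<\infty$. Writing $r=q/p\ge 1$ and applying Jensen's inequality (Lemma~\ref{lem:jensen}, in the form valid for $\E^\ext$) to the convex map $x\mapsto|x|^{r}$ and the nonnegative variable $|X|^{p}$ gives
\[\big(\E^\ext(|X|^{p})\big)^{q/p}=\big(\E^\ext(|X|^{p})\big)^{r}\le \E^\ext\big((|X|^{p})^{r}\big)=\E^\ext(|X|^{q}),\]
and raising to the power $1/q$ yields $\|X\|_p\le\|X\|_q$. When $\E^\ext(|X|^q)=\infty$ the inequality is trivial, so there is no loss in assuming finiteness before invoking Jensen. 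For $q=\infty$ I would instead use that $|X|\le\|X\|_\infty$ quasi-surely, so that by monotonicity and the normalisation of the constants $\E^\ext(|X|^{p})\le\|X\|_\infty^{p}$, again giving $\|X\|_p\le\|X\|_\infty$. Since $|X|^p$ and $|X|^q$ are nonnegative and hence always have well-defined expectations in $[0,\infty]$, this holds for all measurable $X$, not merely for those in $\H$.

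With the inequality established, I would conclude as follows. If $X\in\mathcal{L}^q$, there is a sequence $(X_n)\subset\{Y\in\H:\|Y\|_q<\infty\}$ with $\|X_n-X\|_q\to 0$; by the inequality each $X_n$ also satisfies $\|X_n\|_p\le\|X_n\|_q<\infty$ and $\|X_n-X\|_p\le\|X_n-X\|_q\to 0$, so $X$ is a $\|\cdot\|_p$-limit of the same admissible sequence and therefore lies in $\mathcal{L}^p$. Passing to the quotient spaces is legitimate because the null classes agree: $\|X\|_p=0$ means $\sup_{\theta\in\Theta} E_\theta[|X|^p]=0$, i.e. $X=0$ quasi-surely, which is precisely the condition $\|X\|_q=0$ as well. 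Hence the containment $\mathcal{L}^q\subseteq\mathcal{L}^p$ descends to $L^q\subset L^p$.

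The step requiring the most care is this last one: because $L^p$ and $L^q$ are defined as \emph{completions}, the inclusion is only meaningful once both are realised concretely as spaces of quasi-sure equivalence classes of functions inside $\H^\ext$, as recorded in the remark that $\mathcal{L}^p\subset\H^\ext$. The inequality $\|\cdot\|_p\le\|\cdot\|_q$ guarantees that the identity on $\{Y\in\H:\|Y\|_q<\infty\}$ extends to a contraction between the completions, but to upgrade this contraction to a genuine set inclusion one must check that it does not collapse distinct classes, which is exactly the coincidence of the two null spaces with quasi-sure equality noted above. I would therefore treat the verification that both completions live in $\H^\ext$ modulo the common quasi-sure equivalence as the substantive point, the seminorm inequality being essentially a one-line consequence of Jensen and the sup-representation.
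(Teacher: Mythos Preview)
Your proposal is correct and takes essentially the same approach as the paper: the paper's proof is a one-liner invoking Jensen's inequality with the convex map $x\mapsto|x|^{q/p}$ (and declaring the case $q=\infty$ clear), which is exactly the core of your argument. The additional care you take with the completion and quotient steps is not addressed in the paper, which leaves those points implicit.
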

\begin{proof}
 For $q=\infty$ the result is clear. For $q<\infty$, simply note that $x\mapsto|x|^{q/p}$ is convex, and the result follows from Jensen's inequality.
\end{proof}

\subsection{Uniform integrability}
To prove various convergence results, we require a notion of uniform
integrability under a nonlinear expectation. This notion is very similar to the
classical one, and hence, much of this section is based on \cite{Elliott1982}.

\begin{definition}
 Consider $K\subset L^1$. $K$ is said to be uniformly integrable (u.i.) if
$\E(I_{\{|X|\geq c\}} |X|)$ converges to $0$ uniformly in $X\in K$ as $c\to
\infty$.
\end{definition}

As is clear from Denis et al. \cite{Peng2010a}, in this context there is no guarantee that,
for a single random variable $X\in L^1$, the set $\{X\}$ is uniformly
integrable. For this reason, we define the following space.

\begin{definition}
 Let $L^p_b$ be the completion of the set of bounded functions $X\in \H$,
under the norm $\|\cdot\|_p$. Note that $L^p_b\subset L^p$.
\end{definition}

This set also has the following characterisation (see \cite[Prop. 18]{Peng2010a})
\begin{lemma}
For each $p\geq1$,
\[L^p_b = \{ X\in L^p : \lim_{n\to \infty} \E(|X|^p I_{\{|X|>n\}})=0\}.\]
\end{lemma}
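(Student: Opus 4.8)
Denote by $S$ the set on the right-hand side. The plan is to prove the two inclusions $S\subseteq L^p_b$ and $L^p_b\subseteq S$ separately: the first by a truncation argument, and the second by showing that $S$ is closed in $L^p$. Throughout I work with $\E=\E^\ext$ on $\H^\ext\supseteq L^p$, which is legitimate by Lemma \ref{lem:consistentLpfromext}, and I use that $\|\cdot\|_p$ satisfies Minkowski's inequality under $\E^\ext$ (since $\E^\ext=\sup_\theta E_\theta$ and each $E_\theta$ does).

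For $S\subseteq L^p_b$, I would first record that the truncation map $T_n:X\mapsto (X\wedge n)\vee(-n)$ sends $L^p$ into $L^p_b$. For $X\in\H$ one has $X\wedge n=\tfrac12(X+n-|X-n|)\in\H$ and similarly $X\vee(-n)\in\H$, so $T_n$ carries bounded... more precisely carries $\H$ into the bounded functions of $\H$; moreover $|T_na-T_nb|\leq|a-b|$ pointwise, so by monotonicity $\|T_nX-T_nY\|_p\leq\|X-Y\|_p$, i.e.\ $T_n$ is a contraction. Approximating any $X\in L^p$ by functions of $\H$ and passing to the limit then shows $T_nX\in L^p_b$. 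Now for $X\in S$, the pointwise bound $|X-T_nX|^p=((|X|-n)^+)^p\leq|X|^p I_{\{|X|>n\}}$ gives $\|X-T_nX\|_p^p\leq\E(|X|^p I_{\{|X|>n\}})\to0$, so $X$ is an $L^p$-limit of elements of the closed subspace $L^p_b$, whence $X\in L^p_b$.

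For $L^p_b\subseteq S$, note that any bounded $X\in\H$ lies in $S$, since $\E(|X|^p I_{\{|X|>n\}})=0$ as soon as $n$ exceeds a bound for $|X|$; as $L^p_b$ is the closure of such functions, it suffices to show $S$ is closed in $L^p$. So let $X_k\in S$ with $X_k\to X$ in $L^p$, fix $\epsilon>0$, and choose $k$ with $\|X-X_k\|_p<\epsilon$. Using Minkowski and $I_{\{|X|>n\}}\leq1$,
\[\|XI_{\{|X|>n\}}\|_p\leq\|X_kI_{\{|X|>n\}}\|_p+\|(X-X_k)I_{\{|X|>n\}}\|_p\leq\|X_kI_{\{|X|>n\}}\|_p+\epsilon.\]
To control the first term I would split off $\{|X_k|>m\}$: pick $m$ with $\E(|X_k|^p I_{\{|X_k|>m\}})<\epsilon^p$ (possible since $X_k\in S$), so that
\[\E(|X_k|^p I_{\{|X|>n\}})\leq\E(|X_k|^p I_{\{|X_k|>m\}})+m^p\E(I_{\{|X|>n\}})\leq\epsilon^p+m^p\E(I_{\{|X|>n\}}).\]
The Chebyshev-type bound $\E(I_{\{|X|>n\}})\leq n^{-p}\E(|X|^p)=n^{-p}\|X\|_p^p\to0$ (valid as $X\in L^p$) then gives $\limsup_n\|X_kI_{\{|X|>n\}}\|_p\leq\epsilon$, hence $\limsup_n\|XI_{\{|X|>n\}}\|_p\leq2\epsilon$. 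Since $\epsilon$ is arbitrary, $X\in S$.

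The routine parts are the pointwise inequalities and the contraction/Minkowski properties of $\|\cdot\|_p$ under $\E^\ext$. The main obstacle is the closedness of $S$: the indicator $I_{\{|X|>n\}}$ is tied to the \emph{limit} $X$ rather than to $X_k$, so the three-way estimate — bounding $\E(|X_k|^p I_{\{|X|>n\}})$ by separating the large values of $X_k$ and then invoking Chebyshev to kill $\E(I_{\{|X|>n\}})$ — is the step that needs the most care.
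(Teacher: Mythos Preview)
Your proof is correct. For the inclusion $S\subseteq L^p_b$ you use the same truncation idea as the paper (and you are in fact more careful than the paper in justifying that $T_nX\in L^p_b$ when $X\in L^p$ need not lie in $\H$, via the contraction property of $T_n$).

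For the reverse inclusion $L^p_b\subseteq S$, your route is genuinely different. The paper argues directly from an approximating sequence of bounded $Y_n$: it replaces $Y_n$ by the truncations $T_{\|Y_n\|_\infty}X$ to deduce first that $\E((|X|-n)^pI_{\{|X|>n\}})\to0$, and then bootstraps to $\E(|X|^pI_{\{|X|>n\}})\to0$ via the pointwise bound $|X|^p\leq (1\vee 2^{p-1})\big((|X|-n)^p+n^p\big)$ on $\{|X|>n\}$ together with the trick $(n/2)^p\E(I_{\{|X|>n\}})\leq\E((|X|-n/2)^pI_{\{|X|>n/2\}})$. Your argument instead shows that $S$ is $L^p$-closed: for approximants $X_k\in S$ with $\|X-X_k\|_p<\epsilon$, you bound $\|XI_{\{|X|>n\}}\|_p$ by Minkowski, then control $\E(|X_k|^pI_{\{|X|>n\}})$ by splitting off $\{|X_k|>m\}$ and using Chebyshev on $\E(I_{\{|X|>n\}})$. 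Both approaches are short; yours is arguably cleaner in that it avoids the two-stage bootstrap, while the paper's argument stays entirely within manipulations of a single element $X$ and never needs the Minkowski inequality for $\|\cdot\|_p$.
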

\begin{proof}
For $p=\infty$, the result is trivial. For $p<\infty$, let $J_p = \{ X\in L^p : \lim_{n\to \infty} \E(|X|^p I_{\{|X|>n\}})=0\}$. For each $X\in J_p$, let $X_n = (X\wedge n)\vee(-n)$, which is bounded and in $\H$. We have
\[\E(|X-X_n|^p) \leq \E(|X|^p I_{\{|X|>n\}}) \to 0\]
and hence $X\in L^p_b$.

Conversely, for each $X\in L^p_b$, we can find a sequence of bounded random variables $Y_n$ in $\H$ such that $\E(|X-Y_n|^p)\to 0$. Let $y_n = \|Y_n\|_\infty$, and $X_n = (X\wedge y_n)\vee (-y_n)$. Since $|X-X_n|\leq |X-Y_n|$, we have $\E(|X-X_n|^p)\to 0$. This implies $\lim_n \E(|X-(X\wedge n)\vee(-n)|^p)=0$. We then have
\[\E(|X|^pI_{\{|X|>n\}}) \leq (1\vee 2^{p-1}) (\E((|X|-n)^pI_{\{|X|>n\}})+n^p\E(I_{\{|X|>n\}})),\]
but we know
\[\E((|X|-n)^pI_{\{|X|>n\}}) = \E(|X-(X\wedge n)\vee(-n)|^p) \to 0\]
and
\[(n/2)^p\E(I_{\{|X|>n\}}) \leq \E\left(\left(|X|-\frac{n}{2}\right)^pI_{\{|X|>{n/2}\}}\right) \to 0\]
from which we see $\E(|X|^pI_{\{|X|>n\}})\to 0$, that is, $X\in J_p$.
\end{proof}

From this characterisation, we can clearly see that any uniformly integrable set
 must lie in $L^1_b$. Unlike in \cite{Peng2010a}, we do not need to consider those $X\in L^p_b$ with a quasi-continuous version. For this reason, we do not need to assume that $\Omega$ posesses any topological structure.

\begin{lemma}
 For any $X\in L^1_b$, any $t$, $\E_t(X)\in L^1_b$.
\end{lemma}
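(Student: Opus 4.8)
The plan is to use the definition of $L^1_b$ as the completion of the bounded functions in $\H$ under $\|\cdot\|_1$, and to exhibit $\E_t(X)$ directly as an $L^1$-limit of such functions. Since $X\in L^1_b$, there is by definition a sequence of bounded $Y_n\in\H$ with $\|X-Y_n\|_1\to 0$; these $Y_n$ will serve as the approximating sequence throughout.

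First I would check that each $\E_t(Y_n)$ is again a bounded function lying in $\H$. Boundedness of $Y_n$ means $-c_n\leq Y_n\leq c_n$ q.s.\ for the constant $c_n=\|Y_n\|_\infty$; monotonicity (property (i)) together with $\E_t(\pm c_n)=\pm c_n$ (property (iv), as constants lie in $\H_t$) gives $|\E_t(Y_n)|\leq c_n$. Moreover $\E_t(Y_n)\in\H_t\subseteq\H$ by the codomain of $\E_t$. Thus $\{\E_t(Y_n)\}$ is a sequence of bounded functions in $\H$.

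Next I would invoke the contraction estimate established in the proof of Lemma \ref{lem:convergenceofconditionalsinL1}, namely $\|\E_t(U)-\E_t(V)\|_1\leq\|U-V\|_1$, which extends by continuity to the $L^1$ extension of $\E_t$. Applied with $U=X$ and $V=Y_n$, this yields $\|\E_t(X)-\E_t(Y_n)\|_1\leq\|X-Y_n\|_1\to 0$, so $\E_t(X)$ is the $L^1$-limit of the bounded $\H$-functions $\E_t(Y_n)$. By the definition of $L^1_b$ as a completion, $\E_t(X)\in L^1_b$ (in fact, since each $\E_t(Y_n)\in\H_t$, the limit lies in $L^1_b(\F_t)$).

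I do not expect a serious obstacle here; the argument is essentially bookkeeping once the right sequence is chosen. The only point requiring care is that the approximants must be bounded \emph{and} genuinely lie in $\H$, so that their conditional expectations are again bounded members of $\H$. Passing through the characterisation of $L^1_b$ via $\lim_n\E(|X|I_{\{|X|>n\}})=0$ and the truncations $(X\wedge n)\vee(-n)$ would also work, but using an explicit bounded approximating sequence in $\H$ together with the contraction property is the cleanest route and avoids re-deriving any uniform-integrability characterisation of the conditional expectation.
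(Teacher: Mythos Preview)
Your proposal is correct and follows exactly the paper's approach: the paper's proof notes that $\E_t$ maps bounded functions to bounded functions, invokes the definition of $L^1_b$ as the $\|\cdot\|_1$-completion of bounded functions, and cites Lemma~\ref{lem:convergenceofconditionalsinL1} for the contraction estimate that lets one pass to the limit. Your write-up simply spells out these steps in more detail.
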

\begin{proof}
 For any $X$ bounded, $\E_t(X)$ is also bounded. As $L^1_b$ is the
completion of the set of bounded functions under $L^1$-norm, from Lemma
\ref{lem:convergenceofconditionalsinL1}
the result is clear.
\end{proof}

The following lemma is easy to verify.
\begin{lemma}
 Any finite collection $K\subset L^1_b$ is uniformly
integrable.

The union of a finite number of uniformly integrable sets is also uniformly
integrable.
\end{lemma}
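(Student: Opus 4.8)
The plan is to reduce both statements to the single observation that a single element of $L^1_b$ is uniformly integrable, and then combine finitely many such facts by taking a maximum over a finite index set. The one genuine input is the characterisation of $L^1_b$ proved just above; everything else is a routine $\epsilon$-argument.

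For the first claim, I would first show that each singleton $\{X\}$ with $X\in L^1_b$ is uniformly integrable. By the characterisation of $L^1_b$, $\E(|X|I_{\{|X|>n\}})\to 0$ as $n\to\infty$. The definition of uniform integrability instead refers to $\E(I_{\{|X|\geq c\}}|X|)$ with $c$ ranging over the reals, but since $\{|X|>c\}\subseteq\{|X|\geq c\}\subseteq\{|X|>c'\}$ for any $c'<c$, monotonicity of $\E$ squeezes these quantities together, so the two limits agree and $\E(I_{\{|X|\geq c\}}|X|)\to 0$. Writing $K=\{X_1,\dots,X_k\}$, I then fix $\epsilon>0$, choose for each $i$ a threshold $c_i$ beyond which $\E(I_{\{|X_i|\geq c\}}|X_i|)<\epsilon$, and set $c_0=\max_i c_i$; for $c\geq c_0$ the bound holds simultaneously for all $i$, which is exactly uniform convergence over the finite set $K$.

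The second claim follows by the same device applied one level up. Given uniformly integrable sets $K_1,\dots,K_m$ and $\epsilon>0$, uniform integrability of each $K_j$ provides a threshold $c_j$ with $\sup_{X\in K_j}\E(I_{\{|X|\geq c\}}|X|)<\epsilon$ for $c\geq c_j$. Taking $c_0=\max_j c_j$, for $c\geq c_0$ we have $\sup_{X\in\bigcup_j K_j}\E(I_{\{|X|\geq c\}}|X|)=\max_j\sup_{X\in K_j}\E(I_{\{|X|\geq c\}}|X|)<\epsilon$, so the union is uniformly integrable.

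There is no substantial obstacle here: finiteness of the collection is precisely what makes the maximum of the thresholds finite, and the argument would break for infinite collections. The only point worth flagging is the reliance on the characterisation lemma to guarantee that singletons in $L^1_b$ are uniformly integrable, since this is exactly the property that can fail for a general $X\in L^1$ and is the reason $L^1_b$ was introduced in the first place.
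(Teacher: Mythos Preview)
Your argument is correct and is exactly the routine verification the paper has in mind: the paper does not prove this lemma at all, stating only that it is ``easy to verify''. Your use of the characterisation of $L^1_b$ to handle singletons, followed by taking the maximum of finitely many thresholds, is the natural way to fill in the details.
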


The following results are very closely based on the classical case.

\begin{theorem}\label{thm:unifintcharacter}
 Suppose $K$ is a subset of $L^1$. Then $K$ is uniformly integrable if and only
if both
\begin{enumerate}[(i)]
 \item $\{\E(|X|)\}_{X\in K}$ is bounded.
 \item For any $\epsilon>0$ there is a $\delta>0$ such that for all $A\in \F$
with $\E(I_A)\leq \delta$ we have $\E(I_A |X|)<\epsilon$ for all $X\in K$.
\end{enumerate}
\end{theorem}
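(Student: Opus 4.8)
The plan is to mirror the classical argument (as in \cite{Elliott1982}), checking at each step that only monotonicity, subadditivity (property (v)) and positive homogeneity in scalar multiples survive, all of which $\E$ possesses. Since these three structural properties transfer to the $L^1$-extension of $\E$ by continuity (via Lemma \ref{lem:convergenceofconditionalsinL1}), the estimates below apply verbatim to $K\subset L^1$; the only preliminary technical point is to confirm that $I_{\{|X|\geq c\}}|X|\in L^1$ and that the inequalities below are stable under the $L^1$ limits defining $\E$ on $L^1$.

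First I would prove the forward implication. Assume $K$ is uniformly integrable. For (i), fix $c$ with $\E(I_{\{|X|\geq c\}}|X|)\leq 1$ for all $X\in K$ and split $|X| = I_{\{|X|<c\}}|X| + I_{\{|X|\geq c\}}|X|$; subadditivity together with the bound $I_{\{|X|<c\}}|X|\leq c$ and monotonicity gives $\E(|X|)\leq c+1$. For (ii), given $\epsilon>0$ choose $c$ so that the tail satisfies $\E(I_{\{|X|\geq c\}}|X|)<\epsilon/2$ uniformly in $X\in K$, set $\delta = \epsilon/(2c)$, and for $A\in\F$ with $\E(I_A)\leq\delta$ split $I_A|X|$ across $\{|X|<c\}$ and $\{|X|\geq c\}$. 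On the first piece $I_A I_{\{|X|<c\}}|X|\leq c\,I_A$, so positive homogeneity yields $\E(I_A I_{\{|X|<c\}}|X|)\leq c\,\E(I_A)\leq\epsilon/2$; the second piece is dominated by the tail, giving $\E(I_A|X|)<\epsilon$.

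For the converse I would first establish a Markov-type inequality: since $I_{\{|X|\geq c\}}\leq |X|/c$, monotonicity and homogeneity give $\E(I_{\{|X|\geq c\}})\leq \E(|X|)/c\leq M/c$, where $M$ bounds $\{\E(|X|)\}_{X\in K}$ by (i). Given $\epsilon>0$, take $\delta$ from (ii) and choose $c>M/\delta$; then $\E(I_{\{|X|\geq c\}})<\delta$, so applying (ii) to the $\F$-measurable set $A=\{|X|\geq c\}$ gives $\E(I_{\{|X|\geq c\}}|X|)<\epsilon$ for every $X\in K$. Finally, since $I_{\{|X|\geq c'\}}\leq I_{\{|X|\geq c\}}$ for $c'\geq c$, monotonicity shows $\E(I_{\{|X|\geq c'\}})<\delta$ and hence $\E(I_{\{|X|\geq c'\}}|X|)<\epsilon$ for all $c'\geq c$, which is exactly uniform integrability.

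The point to watch, rather than a genuine obstacle, is that because $\E$ is only sublinear every decomposition must be arranged so that the unavoidable inequality produced by subadditivity points in the useful direction. In the forward direction we seek \emph{upper} bounds on $\E(|X|)$ and $\E(I_A|X|)$, so subadditivity is precisely what is needed; in the converse the sole place where linearity is classically invoked is the Markov inequality, and there monotonicity together with positive homogeneity already suffice. Consequently no step requires the full additivity of a linear expectation, and the classical argument goes through with only the structural properties available under $\E$.
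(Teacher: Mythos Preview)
Your proof is correct and follows essentially the same route as the paper's: the same truncation-and-tail splitting for necessity (with the same choice $\delta=\epsilon/(2c)$), and the same Markov-inequality-then-apply-(ii) argument for sufficiency. The only cosmetic difference is that the paper derives the Markov bound $\E(I_{\{|X|\geq c\}})\leq c^{-1}\E(|X|)$ via the representation $\E(\cdot)=\sup_{\theta\in\Theta}E_\theta[\cdot]$, whereas you obtain it more directly from monotonicity and positive homogeneity.
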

\begin{proof}
 \emph{Necessity:} Define
\[ X^c(\omega) := \begin{cases} X(\omega) & |X(\omega)|\leq c\\ 0&
|X(\omega)|>c\end{cases}, \qquad\qquad X_c = X-X^c.\]
For any $A\in \F$, any $c>0$, we know $\E(I_A |X|) \leq \E(I_A (|X|\vee c)) \leq
c\E(I_A) + \E[|X_c|]$.

For any fixed $\epsilon>0$, if $K$ is uniformly integrable we can find a $c>0$
such that $\E[|X_c|]<\epsilon/2$ for all $X\in K$. Then $\E[|X|]\leq c +
\epsilon/2$ for all $X \in K$, establishing (i). For the same $c$, if $P(A)\leq
\delta=\epsilon/(2c)$ we have $\E(I_A |X|)<\epsilon$, proving (ii).

\emph{Sufficiency:} Fix $\epsilon>0$ and suppose (i) and (ii) hold. Then there
is a $\delta>0$ such that $\E[|X|]<\epsilon$ for all $A\in \F$ with $P(A)\leq
\delta$. Take
\[c = \delta^{-1} \sup_{X\in K} \E(|X|)<\infty.\]
For each $X\in K$, let $A_X=\{|X|\geq c\}$ so that, by Markov's inequality,
\[\E(I_{A_X}) = \sup_{\theta\in \Theta}\theta(\{|X|\geq c\})\leq c^{-1}
\sup_{\theta\in\Theta} E_\theta[|X|] = c^{-1}\E(|X|) \leq \delta.\]
Then
\[\E(I_{\{|X|\geq c\}} |X|) = \E(I_{A_X} |X|) < \epsilon\]
for all $X\in K$, so $K$ is uniformly integrable.
\end{proof}
\begin{corollary}
 Let $K$ be a subset of $L^1$. Suppose there is a positive function $\phi$
defined on $[0,\infty[$ such that $\lim_{t\to\infty} t^{-1} \phi(t) = \infty$
and $\sup_{X\in K} \E(\phi\circ |X|)<\infty$. Then $K$ is uniformly integrable.
\end{corollary}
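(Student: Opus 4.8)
The plan is to run the classical de la Vallée-Poussin argument, checking at each step that only monotonicity and positive homogeneity of the sublinear expectation are invoked, so that nothing beyond the properties already established for $\E$ (and, where limits or nonnegative variables outside $\H$ appear, its extension $\E^{\ext}$) is needed. The characterisation in Theorem \ref{thm:unifintcharacter} could be used, but it is cleaner here to verify the defining condition $\E(I_{\{|X|\geq c\}}|X|)\to 0$ uniformly in $X\in K$ directly.

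First I would set $C:=\sup_{X\in K}\E(\phi\circ|X|)<\infty$ and fix an arbitrary $\epsilon>0$. The crucial elementary observation is that the hypothesis $\lim_{t\to\infty}t^{-1}\phi(t)=\infty$ lets me extract, for the constant $M:=C/\epsilon$, a threshold $c=c(\epsilon)$ (depending only on $\epsilon$ and the fixed constant $C$, not on the individual $X$) such that $\phi(t)\geq M t$ for every $t\geq c$. Equivalently, on the event $\{|X|\geq c\}$ one has the pointwise bound $|X|\leq M^{-1}\phi(|X|)$, uniformly over all $X\in K$. Combining this with $\phi\geq 0$ gives the pointwise chain $I_{\{|X|\geq c\}}|X|\leq M^{-1}I_{\{|X|\geq c\}}\phi(|X|)\leq M^{-1}\phi(|X|)$, and then monotonicity and positive homogeneity yield
\[
\E\big(I_{\{|X|\geq c\}}|X|\big)\leq M^{-1}\E\big(\phi(|X|)\big)\leq \frac{C}{M}=\epsilon
\]
for every $X\in K$. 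Since $\epsilon>0$ was arbitrary and $c$ is independent of $X$, this is exactly the assertion that $\E(I_{\{|X|\geq c\}}|X|)\to 0$ uniformly in $X\in K$, so $K$ is uniformly integrable.

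I do not expect any genuine obstacle here; the points requiring care are purely bookkeeping. One should note that every random variable appearing is nonnegative, so all the expectations are unambiguously defined (if necessary via $\E^{\ext}$ and the representation (\ref{eq:supremumrepresentation}), which shares the order and scaling properties used above). One should also observe that $\phi$ need not be assumed monotone or continuous: only the asymptotic growth $t^{-1}\phi(t)\to\infty$ is used, and solely to produce the uniform threshold $c$. The remainder of the argument is then identical to the classical one.
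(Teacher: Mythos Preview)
Your argument is correct and is essentially identical to the paper's own proof: both fix $\epsilon>0$, set the ratio $a=\lambda/\epsilon$ (your $M=C/\epsilon$), choose $c$ so that $\phi(t)\geq at$ for $t\geq c$, and then bound $\E(I_{\{|X|\geq c\}}|X|)\leq a^{-1}\E(\phi\circ|X|)\leq\epsilon$ using monotonicity and positive homogeneity of $\E$. There is no substantive difference in approach.
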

\begin{proof}
 Write $\lambda = \sup_{X\in K} \E(\phi\circ |X|)$ and fix $\epsilon>0$. Put
$a=\epsilon^{-1}\lambda$ and choose $c$ sufficiently large that $t^{-1}\phi(t)
\geq a$ if $t\geq c$. Then on the set $\{|X|>c\}$ we have $|X|\leq a^{-1}
(\phi\circ |X|)$, so
\[\E(I_{\{|X|\geq c\}}|X|)\leq a^{-1} \E(I_{\{|X|\geq c\}}(\phi\circ|X|)) \leq
a^{-1} \E(\phi\circ |X|) \leq \epsilon,\]
and we see $K$ is uniformly integrable.
\end{proof}
\begin{corollary}
 For any $\epsilon>0$, $L^{1+\epsilon} \subset L^1_b$.
\end{corollary}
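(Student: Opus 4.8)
The plan is to reduce the claim to two facts proved just above: the lemma giving $L^q\subset L^p$ for $p\leq q$, and the characterisation $L^1_b=\{X\in L^1:\lim_n\E(|X|I_{\{|X|>n\}})=0\}$ (the $p=1$ case). First I would note that $L^{1+\epsilon}\subset L^1$ is immediate from the former, so any $X\in L^{1+\epsilon}$ is already a legitimate element of $L^1$ with $\E(|X|)<\infty$. By the characterisation it then suffices to verify the single tail condition $\E(|X|I_{\{|X|>n\}})\to 0$ as $n\to\infty$, for each fixed $X\in L^{1+\epsilon}$.

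The cleanest route is to invoke the preceding Corollary with the test function $\phi(t)=t^{1+\epsilon}$, applied to the singleton $K=\{X\}$. This $\phi$ satisfies $t^{-1}\phi(t)=t^{\epsilon}\to\infty$, and the hypothesis $\sup_{Y\in K}\E(\phi\circ|Y|)=\E(|X|^{1+\epsilon})=\|X\|_{1+\epsilon}^{1+\epsilon}$ is finite precisely because $X\in L^{1+\epsilon}$. The Corollary then yields that $\{X\}$ is uniformly integrable, which for a singleton is exactly the tail condition characterising $L^1_b$; since $X$ was arbitrary, $L^{1+\epsilon}\subset L^1_b$. Alternatively one can bypass the Corollary and make the estimate explicit, essentially re-running its proof for a single point: on $\{|X|>n\}$ we have $|X|\leq n^{-\epsilon}|X|^{1+\epsilon}$, so by monotonicity $\E(|X|I_{\{|X|>n\}})\leq n^{-\epsilon}\E(|X|^{1+\epsilon})=n^{-\epsilon}\|X\|_{1+\epsilon}^{1+\epsilon}\to 0$.

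I expect the only genuine subtlety --- rather than an obstacle --- to be bookkeeping about the completion: one must confirm that for $X$ in the \emph{completed} space $L^{1+\epsilon}$ the quantity $\E(|X|^{1+\epsilon})$ is actually finite and consistently computed, so that $\phi\circ|X|=|X|^{1+\epsilon}$ has a well-defined, finite sublinear expectation. This is guaranteed by the remark that $\mathcal{L}^{1+\epsilon}\subset\H^{\ext}$ together with Lemma \ref{lem:consistentLpfromext}, which identify $\E(|X|^{1+\epsilon})$ with $\E^{\ext}(|X|^{1+\epsilon})=\|X\|_{1+\epsilon}^{1+\epsilon}<\infty$. Once this is in place, the whole argument collapses to the one-line monotonicity estimate above.
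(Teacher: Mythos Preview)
Your proof is correct and follows essentially the same approach as the paper: apply the preceding corollary with $\phi(t)=t^{1+\epsilon}$ to the singleton $K=\{X\}$ for $X\in L^{1+\epsilon}$. Your additional remarks on the explicit tail estimate and the completion bookkeeping are sound but go beyond what the paper records.
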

\begin{proof}
 Simply apply the previous corollary with $\phi(x) = x^{1+\epsilon}$ for any
$K=\{X\}$, $X\in L^{1+\epsilon}$.
\end{proof}

\begin{definition}
 A sequence $X_n\in \H^\ext$ will be said to converge in capacity to some
$X_\infty\in \H^\ext$ if, for any $\epsilon, \delta>0$, there exists an $N\in
\mathbb{N}$ such that
\[\E^\ext(I_{\{|X_m-X_\infty|>\epsilon\}})<\delta\]
for all $m\geq N$.
\end{definition}

\begin{lemma}
 A sequence $X_n$ which converges quasi-surely to $X_\infty$ also converges in
capacity. Conversely, a sequence $X_n$ which converges to $X_\infty$ in
capacity posesses a subsequence $X_{n_k}$ which converges quasi-surely to
$X_\infty$.
\end{lemma}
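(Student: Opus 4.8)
\section*{Proof proposal}

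The plan is to transplant the classical equivalence between almost-sure convergence and convergence in measure, replacing each probability measure by the capacity $A\mapsto\E^\ext(I_A)=\sup_{\theta\in\Theta}\theta(A)$ (noting that $I_A\in\H$ for $A\in\F$, since $\H$ contains the constants and is stable under multiplication by indicators, so $\E^\ext(I_A)=\E(I_A)$). The one place where the supremum over $\Theta$ is not harmless is a continuity-from-above step, which I would discharge using the monotone continuity property built into the definition of a $\SL$-expectation.

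For the forward implication, I would fix $\epsilon>0$ and set $A_m^\epsilon=\{|X_m-X_\infty|>\epsilon\}$ and $B_n=\bigcup_{m\geq n}A_m^\epsilon$, so that $B_n$ decreases to $B:=\limsup_m A_m^\epsilon=\bigcap_n B_n$. If $N$ is a polar set off which $X_n\to X_\infty$ pointwise, then each $\omega\in N^c$ lies in only finitely many $A_m^\epsilon$, whence $B\subseteq N$ and $\E(I_B)=0$. Since $I_{B_n}-I_B=I_{B_n\setminus B}\downarrow 0$ pointwise with each term in $\H$, the monotone continuity property yields $\E(I_{B_n\setminus B})\to 0$, and sublinearity gives $\E^\ext(I_{B_n})=\E(I_{B_n})\leq\E(I_{B_n\setminus B})+\E(I_B)\to 0$. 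As $A_m^\epsilon\subseteq B_n$ for $m\geq n$, monotonicity gives $\E^\ext(I_{A_m^\epsilon})\leq\E^\ext(I_{B_n})$, so choosing $n$ with $\E^\ext(I_{B_n})<\delta$ delivers convergence in capacity.

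For the converse I would run a Borel--Cantelli argument. Using the definition of convergence in capacity with $\epsilon=\delta=2^{-k}$, choose $n_k$ strictly increasing so that $\E^\ext(I_{A_k})<2^{-k}$ for $A_k:=\{|X_{n_k}-X_\infty|>2^{-k}\}$. Countable subadditivity of the capacity, immediate from $\theta(\bigcup_k A_k)\leq\sum_k\theta(A_k)$ on taking the supremum over $\theta\in\Theta$, gives $\E^\ext(I_{\bigcup_{k\geq j}A_k})\leq\sum_{k\geq j}\E^\ext(I_{A_k})\leq 2^{1-j}$. Thus $\limsup_k A_k=\bigcap_j\bigcup_{k\geq j}A_k$ has capacity at most $2^{1-j}$ for every $j$, hence is polar; off it each $\omega$ lies in only finitely many $A_k$, so $|X_{n_k}(\omega)-X_\infty(\omega)|\leq 2^{-k}$ for all large $k$ and $X_{n_k}\to X_\infty$ quasi-surely.

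The only genuinely non-classical point is showing $\E^\ext(I_{B_n})\to 0$ in the forward direction: continuity from above fails for a bare supremum of measures, and it is precisely here that the monotone continuity property is indispensable. The converse, by contrast, needs only countable subadditivity, which every capacity of the form $\sup_\theta\theta$ enjoys, so no further regularity is required there.
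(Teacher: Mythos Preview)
Your proof is correct and follows essentially the same route as the paper's: for the forward implication you use the same decreasing sets $B_n=\bigcup_{m\ge n}\{|X_m-X_\infty|>\epsilon\}$ and invoke the monotone continuity property to pass to the limit, and for the converse you run the same Borel--Cantelli argument via countable subadditivity of the capacity (the paper uses tolerance $k^{-1}$ where you use $2^{-k}$, which is immaterial). Your explicit remark that monotone continuity is the one genuinely non-classical ingredient is a nice addition.
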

\begin{proof}
Fix $\epsilon>0$. Consider the sequence of measurable sets $A_n = \bigcup_{m\geq
n}\{|X_m-X|>\epsilon\}$. Let $A_\infty :=\bigcap_n A_n$. Then $I_{A_n}-I_{A_\infty}$ is a sequence decreasing to zero in $\H$, hence, by the monotone continuity property and sublinearity,
\[0\leq \E^\ext(I_{A_n})-\E^\ext(I_{A_\infty})\leq \E^\ext(I_{A_n}-I_{A_\infty})\downarrow 0,\]
and so $\lim_n \E^\ext(I_{A_n}) = \E^\ext(I_{A_\infty})$.

As $X_m$ converges except on a polar set $N$, on $N^c$ we know
$|X_m-X|>\epsilon$ for only finitely many $m$. Hence $A_\infty\cap N^c =
\emptyset$. Therefore, $\lim_n \E(I_{A_n}) = 0$, and so we can find the
required constant $K$ such that $\E(I_{A_n})< \delta$ for all $n\geq K$. Hence the first statment is
proven.

To show the converse, simply find a subsequence $n_k$ such
that $\E^\ext(I_{\{|X_{n_k}-X_\infty|\}})\leq 2^{-k}.$
Then, in the same way as for the Borel-Cantelli lemma,
\[\begin{split}\E^\ext(I_{\{X_{n_k} \not\to X_\infty\}})
&= \E^\ext(I_{\bigcap_N\bigcup_{k>N}\{|X_{n_k} - X_\infty|\geq k^{-1}\}})\\
&\leq \liminf_N \sum_{k>N} \E^\ext(I_{\{|X_{n_k}-X_\infty|\}}) =0\end{split}
\]
and so $X_{n_k}\to X$ q.s.
\end{proof}

We now prove the key convergence result for uniformly integrable sequences. A
similar result, in one direction (sufficiency) only can be found in Couso, Montes and Gil \cite{Couso2002}.

\begin{theorem}\label{thm:uiconvergence}
Suppose $X_n$ is a sequence in $L^1_b$, and $X\in \H^\ext$. Then the $X_n$ converge
in $L^1$ norm to $X$ if and only if the collection $\{X_n\}_{n\in\mathbb{N}}$
is uniformly integrable and the $X_n$ converge in capacity to $X$.

Furthermore, in this case, the collection $\{X_n\}_{n\in\mathbb{N}}
\cup \{X\}$ is also uniformly integrable and $X\in L^1_b$.
\end{theorem}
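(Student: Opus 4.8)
The plan is to mirror the classical Vitali convergence theorem, systematically replacing the linear integral with $\E^\ext$ and leaning on subadditivity together with the characterisation of uniform integrability in Theorem \ref{thm:unifintcharacter}.

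\emph{Necessity.} Assuming $\|X_n-X\|_1\to 0$, convergence in capacity is immediate from Markov's inequality (as already used in Theorem \ref{thm:unifintcharacter}): $\E^\ext(I_{\{|X_n-X|>\epsilon\}})\leq\epsilon^{-1}\E^\ext(|X_n-X|)\to 0$. For uniform integrability I would verify conditions (i) and (ii) of Theorem \ref{thm:unifintcharacter}. Boundedness of $\{\E(|X_n|)\}$ holds because a convergent, hence Cauchy, sequence in $L^1$ has bounded norm. For (ii), fix $\epsilon>0$, use the Cauchy property to choose $N$ with $\|X_n-X_N\|_1<\epsilon/2$ for $n\geq N$, and exploit that the finite collection $\{X_1,\dots,X_N\}$ is uniformly integrable. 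From $|X_n|\leq|X_N|+|X_n-X_N|$ and subadditivity, $\E(I_A|X_n|)\leq\E(I_A|X_N|)+\|X_n-X_N\|_1$; the second term is below $\epsilon/2$, while the first is controlled by condition (ii) for the uniformly integrable finite collection once $\E(I_A)$ is small.

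\emph{Sufficiency.} Assume $\{X_n\}$ is uniformly integrable and $X_n\to X$ in capacity. The crucial step is to show $\{X_n\}$ is Cauchy in $L^1$. Given $\eta>0$, set $\epsilon=\eta/3$ and $A=\{|X_n-X_m|>\epsilon\}$; from $|X_n-X_m|\leq\epsilon+I_A|X_n-X_m|$ and $I_A|X_n-X_m|\leq I_A|X_n|+I_A|X_m|$, subadditivity yields $\E(|X_n-X_m|)\leq\epsilon+\E(I_A|X_n|)+\E(I_A|X_m|)$. Since $\{|X_n-X_m|>\epsilon\}\subseteq\{|X_n-X|>\epsilon/2\}\cup\{|X_m-X|>\epsilon/2\}$, convergence in capacity makes $\E(I_A)$ arbitrarily small for large $n,m$, so condition (ii) of Theorem \ref{thm:unifintcharacter} bounds the remaining two terms by $\eta/3$ each, giving the Cauchy property. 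By completeness of $L^1$ the sequence converges to some $X'\in L^1$, and since $L^1_b$ is closed (being itself a completion) and each $X_n\in L^1_b$, in fact $X'\in L^1_b$.

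It then remains to identify $X'$ with $X$. By the necessity direction $X_n\to X'$ in capacity, and as $X_n\to X$ in capacity as well, the inclusion $\{|X-X'|>\epsilon\}\subseteq\{|X_n-X|>\epsilon/2\}\cup\{|X_n-X'|>\epsilon/2\}$ forces $\E^\ext(I_{\{|X-X'|>\epsilon\}})=0$ for every $\epsilon$, whence $X=X'$ q.s.; thus $\|X_n-X\|_1=\|X_n-X'\|_1\to 0$ and $X\in L^1_b$. The \emph{furthermore} clause follows at once, since $\{X\}$ is uniformly integrable as a single element of $L^1_b$ and a finite union of uniformly integrable sets is uniformly integrable. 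I expect the main obstacle to be the sufficiency direction, where convergence in capacity (to shrink the exceptional set $A$) and uniform integrability (to shrink the contribution of $|X_n|$ over $A$) must be combined simultaneously and uniformly in $n$; the subsequent identification of the limit rests on uniqueness of limits in capacity, which, absent a dominating measure, is ultimately underwritten by the monotone continuity property invoked earlier in the capacity/quasi-sure correspondence.
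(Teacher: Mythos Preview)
Your proof is correct. The necessity direction is essentially the paper's argument with a cosmetic variation: you pivot around $X_N$ and invoke uniform integrability of the finite set $\{X_1,\dots,X_N\}$, whereas the paper first notes $X\in L^1_b$ by completeness and pivots around $X$, invoking uniform integrability of the finite collection $\{X\}\cup\{X_n-X_N\}_{n<N}$.

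The sufficiency direction, however, takes a genuinely different route. The paper extracts a q.s.\ convergent subsequence, applies Fatou to obtain $\E(I_A|X|)\leq\liminf_k\E(I_A|X_{n_k}|)$ and hence $X\in L^1_b$ directly, and then proves $\|X_n-X\|_1\to 0$ via the truncation decomposition $|X-X_n|\leq|X_n^c-X^c|+|(X_n)_c|+|X_c|$, controlling the bounded part through capacity convergence and the tails through uniform integrability of $\{X_n\}$ and membership of $X$ in $L^1_b$. You instead show the sequence is $L^1$-Cauchy (splitting over $\{|X_n-X_m|>\epsilon\}$ and combining capacity convergence with condition (ii) of Theorem \ref{thm:unifintcharacter}), invoke completeness of $L^1_b$ to produce a limit $X'$, and then identify $X'=X$ by uniqueness of limits in capacity. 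Your approach is arguably cleaner in that it avoids both the subsequence/Fatou step and the explicit truncation estimate, at the cost of an extra limit-identification step; the paper's approach works directly with the given limit $X$ from the outset and never needs to compare two candidate limits.
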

\begin{proof}
 \emph{Necessity:} Suppose the sequence $X_n$ converges in $L^1$ norm. That
$X_n$ converges in capacity follows from Markov's inequality. As $L^1_b$ is
complete under the $L^1$ norm, $X\in L^1_b$. For any $n$, we
have $\E(|X_n|)\leq \E(|X_n-X|)+\E(|X|)$, so $\E(|X_n|)$ is uniformly bounded.

For any $\epsilon>0$, let $N\in\mathbb{N}$ be such that $\E(|X_n-X|)< \epsilon/3$ for all
$n\geq N$. Then for any set $A\in \F$, this implies, for all $n\geq N$,
\[\E(I_A |X_n|) \leq \E(I_A|X|) +\E(|X_n-X|) <\E(I_A|X|)+\epsilon/3.\]
For any $n<N$, any $A\in \F$, we have
\[\E(I_A|X_n|) < \E(I_A|X|) + \E(I_A|X_n-X_N|) +\epsilon/3.\]
As $\{X\}\cup \{X_n-X_N\}_{\{n<N\}}$ is a finite collection in $L^1_b$, it is
u.i. Hence we can find a $\delta>0$ such that
$\E(I_A|X|)<\epsilon/3$ and $\E(I_A|X_n-X_N|)<\epsilon/3$ whenever
$\E(I_A)<\delta$. This implies that $\E(I_A |X_n|)<\epsilon$ whenever
$\E(I_A)<\delta$, that is, $\{X_n\}$ is u.i. As
$\{X_n\}_{n\in\mathbb{N}}\cup \{X\}$ is the union of two uniformly integrable
sets, it is also u.i., completing the proof.

\emph{Sufficiency:}
Suppose $\{X_n\}_{n\in \mathbb{N}}$ is uniformly integrable. As $X_n$ converges
in capacity, we can select a subsequence $n_k$ which converges quasi surely to
$X$. By Fatou's lemma, for any $A\in \F$,
\[\E(I_A|X|) = \E(\lim_k I_A|X_{n_k}|) \leq \lim\inf_k
\E(I_A|X_{n_k}|).\]
Hence, as $\{X_{n_k}\}$ is u.i.,  we see that $X\in L^1_b$.

Using the notation of Theorem \ref{thm:unifintcharacter},
\[\E(|X-X_n|) \leq \E(|X_n^c-X^c|) + \E(|(X_n)_c|) +
\E(|X_c|).\]
For any fixed $\epsilon>0$, as $\{X_n\}$ is u.i., $X\in L^1_b$, we can find
a $c>0$ such that $\E(|(X_n)_c|)<\epsilon/4$ and $\E(|X_c|)<\epsilon/4$.
Now note that, as $X_n^c \to X^c$ in
capacity, we can always find $N$ such that, for all
$n\geq N$, $\E(I_{\{|X_n-X|>\epsilon/4\}})<\epsilon/8c$. Hence, as
$|X_n^c-X^c|\leq 2c$, for all $n\geq N$ we have
\[\E(|X_n^c-X^c|)\leq \E(|X_n^c-X^c|\vee \frac{\epsilon}{4}) \leq
\frac{\epsilon}{4} + 2c \E(I_{\{|X_n-X|>\epsilon/4\}}) \leq
\frac{\epsilon}{2}.\]
Therefore $\E(|X-X_n|)<\epsilon$ for all $n\geq N$, as desired.
\end{proof}

\section{Symmetric processes and $\SL$-martingales.}
We now seek to use this framework to construct at theory of $\SL$-martingales (where $\SL$ refers to `sub-linear').
This is in many ways fundamentally simpler than in a continuous-time framework.

We begin by defining the key processes of interest.

\begin{definition}
A process $X$ is called a $\SL$-martingale if it satisfies
\[X_s = \mathcal{E}_s(X_t)\]
for all $s\leq t$, and $X_t \in L^1\cap m\F_t$ for all $t$. Similarly we define $\SL$-super-
and $\SL$-submartingales.
\end{definition}

\subsection {Optional stopping and Up/Downcrossing inequalities}
We now seek to reproduce the classical Optional stopping theorem, and the Up
and Down-crossing inequalities of the linear theory. The following definition
is standard.

\begin{definition}
 Let $X$ be an adapted, real-valued process. We define $M(\omega, X;[\alpha,
\beta])$ to be the number of upcrossings of the interval $[\alpha, \beta]$ made
by $X$, and similarly $D(\omega, X;[\alpha, \beta])$ for the number of
downcrossings.

For a stopping time $S$, we define $X^S$ to be the process $X$ stopped at $S$,
that is, $X^S_t(\omega) = X_{t\wedge S(\omega)}(\omega)$. As time is discrete, we know this is measurable, and for all finite $t$, $X^S_t\in\H$. If $S$ is bounded, then $X^S_\infty\in\H$ also.
\end{definition}
\begin{remark}
If $S$ is unbounded, we can only guarantee that $X^S_\infty \in \H^\ext$. This is because we have not assumed that for any countable partition $\{A_i\}\subset\F$ of $\Omega$, any $\{X_i\}\subset\H$ we have $\sum_i I_{A_i} X_i \in \H$. On the other hand, if we know that $X^S_t$ converges in $L^p$, then we can guarantee that $X^S_\infty\in L^p$.
\end{remark}

The Optional stopping and up/downcrossing results are
complicated by the fact that we cannot simply exchange $\SL$-super- and
$\SL$-sub-martingales by multiplication by -1. We do, however, have the following
Lemma.

\begin{lemma} \label{lem:negsuperissub}
 Let $(X_t)$ be a $\SL$-supermartingale. Then $(-X_t)$ is a $\SL$-submartingale.
\end{lemma}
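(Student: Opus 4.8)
The plan is to unwind the two definitions and exploit the one-sided inequality for sublinear expectations that replaces the classical identity $\E_s(-Y)=-\E_s(Y)$. By definition, a $\SL$-supermartingale satisfies $X_s\geq\E_s(X_t)$ for all $s\leq t$, with each $X_t\in L^1\cap m\F_t$, while a $\SL$-submartingale $Y$ satisfies $Y_s\leq\E_s(Y_t)$. Setting $Y=-X$, the goal reduces to verifying the integrability condition $-X_t\in L^1\cap m\F_t$ together with the inequality $-X_s\leq\E_s(-X_t)$, equivalently $X_s\geq-\E_s(-X_t)$, for all $s\leq t$.

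The integrability is immediate: $L^1\cap m\F_t$ is a linear space and $|-X_t|=|X_t|$, so $-X_t$ inherits membership from $X_t$. For the inequality, the key observation is that although sublinearity forbids passing the minus sign through $\E_s$, we do retain the one-sided estimate $\E_s(Z)\geq-\E_s(-Z)$ for every $Z\in\H$ (the final unnumbered Lemma of the Basic Properties subsection). Applying this with $Z=X_t$ and chaining with the supermartingale inequality gives
\[X_s\geq\E_s(X_t)\geq-\E_s(-X_t),\]
which is exactly $-X_s\leq\E_s(-X_t)$, as required.

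I expect no substantive obstacle here: the entire content is the recognition that the asymmetry of a sublinear expectation---the very feature that blocks the trivial classical argument of multiplying by $-1$---resolves in our favour, since the inequality $\E_s(Z)\geq-\E_s(-Z)$ points precisely toward the submartingale direction. The only point deserving care is confirming that this inequality, stated for the unconditional operator $\E=\E_0$, transfers verbatim to each conditional operator $\E_s$; this is automatic, since its proof invokes only the sublinearity properties (v) and (vi), which hold at every level $s$.
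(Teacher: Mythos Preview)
Your proof is correct and follows exactly the paper's approach: the paper writes the chain $-\E_s(-X_t)\leq \E_s(X_t)\leq X_s$ (the first inequality being subadditivity, the second the supermartingale property) and multiplies through by $-1$, which is precisely your argument. Your additional remarks on integrability and on the conditional version of the inequality $\E_s(Z)\geq -\E_s(-Z)$ are welcome clarifications that the paper leaves implicit.
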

\begin{proof}
By subadditivity, for any $s<t$ we have
\[-\E_s(-X_t)\leq \E_s(X_t)\leq X_s\]
and multiplication by $-1$ yields the result.
\end{proof}
(Note, however, that the converse result does not hold.) 

%
%
%

\begin{theorem}[Bounded optional stopping for submartingales]
 Let $X$ be a $\SL$-submartingale, and $S\leq T$ be bounded stopping times. Then
$X_S\leq \E_S(X_T)$.
\end{theorem}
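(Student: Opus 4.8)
The plan is to establish the bounded optional stopping theorem by reducing to the already-proved consistency result for bounded stopping times (Lemma \ref{lem:boundedstoppingconsistent}) and the recursive structure of the submartingale property. The key observation is that for a bounded stopping time $S$, the expectation $\E_S$ is well-defined pathwise via $\E_S(\cdot;\omega) = \E_{S(\omega)}(\cdot;\omega)$, and on the event $\{S=n\}$ (which is $\F_n$-measurable) we have $X_S = X_n = \E_n(X_n)$.

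First I would partition $\Omega$ into the sets $A_n = \{S=n\}$ for $n = 0,1,\dots,N$, where $N$ is an upper bound for $T$ (and hence for $S$). The submartingale property gives $X_s \leq \E_s(X_t)$ for $s \leq t$, and I would want to show that on each $A_n$, $X_n \leq \E_n(X_T)$, since $X_S = \sum_n I_{A_n} X_n$ and $\E_S(X_T) = \sum_n I_{A_n}\E_n(X_T)$ by the pathwise definition together with property (iii) (or (iii')) applied on the partition. So the claim reduces to verifying $I_{A_n} X_n \leq I_{A_n}\E_n(X_T)$ for each $n$.

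The natural approach for this reduction is backward induction on the stopping time $T$, closely mirroring the iteration in Lemma \ref{lem:boundedstoppingconsistent}. On the event $\{S = n\}$, I would want to peel off one step at a time: using the submartingale inequality $X_n \leq \E_n(X_{n+1})$, then applying $\E_n(X_{n+1}) \leq \E_n(\E_{n+1}(X_{n+2}))$ via monotonicity (property (i)) combined with the submartingale inequality at the next level, and using the tower property (ii) to collapse $\E_n(\E_{n+1}(\cdot)) = \E_n(\cdot)$. Iterating from level $n$ up to level $N$ yields $X_n \leq \E_n(X_N)$ on $\{S=n\}$. More carefully, I would track the stopped process: defining $X^T_{t}$ and showing that $t \mapsto \E_{S \wedge t}(X^T_t)$ is suitably monotone, so that starting from an upper bound for $T$ and iterating downward recovers $X_S \leq \E_S(X_T)$. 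The crucial inputs are monotonicity to push the submartingale inequality through $\E_s$, and the tower property to recombine the conditional expectations, exactly as in the bounded-stopping consistency lemma but with inequalities replacing equalities.

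The main obstacle I anticipate is handling the interplay between the two stopping times $S \leq T$ simultaneously rather than one at a time, and ensuring that the local property (iii') correctly localizes each inequality to the event $\{S=n\}$ without losing the ordering. Specifically, one must be careful that on $\{S=n\}$ the relevant comparison runs from time $n$ up to the value of $T$ along each path, and that combining the pathwise pieces via $I_{A_n}$-multiplication preserves monotonicity under $\E_n$ (this is exactly property (iii), $\E_n(I_{A_n} Y) = I_{A_n}\E_n(Y)$, together with monotonicity). I expect this bookkeeping to be the delicate part, whereas the underlying recursion is a straightforward adaptation of Lemma \ref{lem:boundedstoppingconsistent}.
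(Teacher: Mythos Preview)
Your overall plan---partition on $\{S=n\}$, work with the stopped process $X^T$, and iterate the one-step submartingale inequality---is a valid route, but it is \emph{not} the one the paper takes. The paper instead invokes semi-strict monotonicity (Lemma~\ref{lem:strictmonotone}) to reduce the claim $X_S\le\E_S(X_T)$ to the unconditional inequality $\E(I_A(X_S-X_T))\le 0$ with $A=\{X_S>\E_S(X_T)\}$; it then treats the single-step case $S\le T\le S+1$ by peeling off one term at a time, and finally telescopes through $R_n=T\wedge(S+n)$. Your argument stays conditional throughout and is closer to the classical textbook proof; the paper's detour through $\E(I_A(\cdot))$ avoids having to manipulate $\E_S(X_T)$ directly.

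Two points where your write-up needs tightening. First, the chain ``$X_n\le\E_n(X_{n+1})\le\cdots\le\E_n(X_N)$'' gives $\E_n(X_N)$, not $\E_n(X_T)$; you acknowledge this and pass to $X^T$, but you never state the lemma that makes this work: $X^T$ is itself a $\SL$-submartingale. This is the genuine content of your ``more careful'' step, and it needs the translation property $\E_t(\alpha+Y)=\alpha+\E_t(Y)$ together with property~(iii) to compute
\[
\E_t(X^T_{t+1}) \;=\; X^T_t + I_{\{T>t\}}\bigl(\E_t(X_{t+1})-X_t\bigr)\;\ge\; X^T_t.
\]
Second, the sentence ``$t\mapsto\E_{S\wedge t}(X^T_t)$ is suitably monotone'' is not the right formulation: that map is not pointwise monotone in $t$ in any useful sense. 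What you actually want is the simpler statement that, once $X^T$ is a submartingale, iterating gives $X^T_n\le\E_n(X^T_N)=\E_n(X_T)$ for every $n$, and then on $\{S=n\}$ you read off $X_S=X^T_n\le\E_n(X_T)=\E_S(X_T)$. With those two fixes your argument goes through cleanly.
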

\begin{proof}
 By the strict montonicity of $\E$ (Lemma \ref{lem:strictmonotone}), we
only need to show that, for $A=\{X_S>\E_S(X_T)\}\in \F_S$,
\[\E(I_A(X_S-\E_S(X_T)))=\E(I_A(X_S-X_T))\leq 0.\]

First assume that $S\leq T\leq S+1$, and let $K$ be an upper bound for $T$. Then
let $B_n = A\cap \{S=n\}\cap \{T>S\}$ and $C=A\cap \{S=T\}$. By construction,
$B_n, C\in \F_S$ and $A=(\cup_n B_n) \cup C$. We have $X_S-X_T = X_n-X_{n+1}$ on
$B_n$, therefore,
\[\E(I_A(X_S-X_T)) = \E(\sum_{n=0}^K I_{B_n} (X_n-X_{n+1})).\]
Hence, if $k=K-1$,
\[\E(I_A(X_S-X_T)) = \E(\E_k(I_{B_k} (X_k-X_{k+1}))+\sum_{n<k} I_{B_n}
(X_n-X_{n+1}))\]
As $\E_k(I_{B_k} (X_k-X_{k+1}))\leq 0$, we have
\[\E(I_A(X_S-X_T)) \leq \E(\sum_{n<k} I_{B_n}(X_n-X_{n+1})).\]

Iterating this argument with decreasing $k=K-2, K-3,...$, we see that
\[\E(I_A(X_S-X_T))\leq 0.\]

Hence the result is proven when $T\leq S+1$. In the general case, write $R_n =
T\wedge(S+n)$, for $n=0,1,..., K$, and as the $R_n$ are also stopping times,
$S=R_0$, $T=R_K$ and $R_n\leq R_{n+1}\leq R_n+1$, we see that
\[X_{R_k}\leq \E_{R_k}(X_{R_{k+1}})\leq \E_{R_k}(\E_{R_{k+1}}(X_{R_{k+2}}))\]
and a clear inductive argument gives the desired result $X_S\leq \E_S(X_T)$.
\end{proof}
\begin{theorem}
Let $X$ be a $\SL$-supermartingale, and $S\leq T$ be bounded stopping times.
Then $X_S\geq \E_S(X_T)$.
\end{theorem}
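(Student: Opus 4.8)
The plan is to mirror the proof of the preceding theorem (bounded optional stopping for submartingales), with every inequality reversed and the roles of $X_S$ and $X_T$ interchanged. One cannot simply apply that result to $-X$: by Lemma \ref{lem:negsuperissub}, $-X$ is a $\SL$-submartingale, so the previous theorem yields $-X_S \le \E_S(-X_T)$, that is, $X_S \ge -\E_S(-X_T)$. Since sublinearity guarantees only $\E_S(X_T) \ge -\E_S(-X_T)$, this falls short of the claim $X_S \ge \E_S(X_T)$, and so a direct argument is required.

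First I would reduce to a polar-set statement via semi-strict monotonicity (Lemma \ref{lem:strictmonotone}). Setting $A = \{\E_S(X_T) > X_S\} \in \F_S$, the random variable $I_A(\E_S(X_T) - X_S)$ is nonnegative, so it suffices to show that its expectation is nonpositive; monotonicity then forces that expectation to vanish, and Lemma \ref{lem:strictmonotone} makes $A$ polar, giving $X_S \ge \E_S(X_T)$ q.s. Using the tower property for bounded stopping times (Lemma \ref{lem:boundedstoppingconsistent}), together with the identity $\E_S(\,\cdot + \alpha) = \E_S(\cdot) + \alpha$ for $\F_S$-measurable $\alpha$ and the localisation $I_A \E_S(X_T) = \E_S(I_A X_T)$ (property (iii)), I would rewrite
\[\E\big(I_A(\E_S(X_T) - X_S)\big) = \E\big(\E_S(I_A X_T) - I_A X_S\big) = \E\big(I_A(X_T - X_S)\big),\]
the crucial point being that $X_S$ enters additively as an $\F_S$-measurable term, so no sublinearity loss occurs here; this is precisely where the supermartingale orientation helps.

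Next I would treat the one-step case $S \le T \le S+1$ as before. With $K$ a bound for $T$, and $B_n = A \cap \{S = n\} \cap \{T > S\}$, $C = A \cap \{S = T\}$, one has $I_A(X_T - X_S) = \sum_{n=0}^{K-1} I_{B_n}(X_{n+1} - X_n)$. Peeling off the top index $k = K-1$ via the tower property and the additivity identity gives
\[\E_k\big(I_{B_k}(X_{k+1} - X_k)\big) = I_{B_k}\big(\E_k(X_{k+1}) - X_k\big) \le 0,\]
the final inequality now being immediate from the supermartingale property $\E_k(X_{k+1}) \le X_k$ — in contrast to the submartingale case, no appeal to the definition of $A$ is needed to control this term. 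Dropping the nonpositive term by monotonicity and iterating through $k = K-2, \dots, 0$ yields $\E\big(I_A(X_T - X_S)\big) \le 0$.

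For the general case I would set $R_n = T \wedge (S+n)$, so that $R_0 = S$, $R_K = T$ and $R_n \le R_{n+1} \le R_n + 1$; the one-step result gives $X_{R_n} \ge \E_{R_n}(X_{R_{n+1}})$, and chaining these using the monotonicity and consistency of $\E$ delivers $X_S \ge \E_S(X_T)$ exactly as in the submartingale argument. I expect the main obstacle to be bookkeeping in the reduction step rather than any genuine difficulty: one must peel the indices in decreasing order so that the term being removed always sits under a conditioning $\F_k$ at which $X_k$ is measurable, which is what guarantees that the additivity identity (rather than mere subadditivity) applies and that the supermartingale inequality can be invoked without any sublinear correction term.
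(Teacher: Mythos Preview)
Your proposal is correct and follows exactly the route the paper indicates: the paper's own proof is the single sentence ``can be shown in precisely the same way as the previous result, however reversing the direction of the inequality used,'' and you have simply spelled this out in full. Your remark that the supermartingale orientation makes the reduction step exact---because the $\F_S$-measurable term $X_S$ is \emph{subtracted} and hence slides inside $\E_S$ via translation invariance without any sublinearity loss---is a correct and useful observation that the paper leaves implicit.
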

\begin{proof}
The inequality $X_S\geq \E_S(X_T)$ can be shown in precisely the same way as the
previous result, however reversing the direction of the inequality used.
\end{proof}
\begin{corollary}
 Let $X$ be a $\SL$-martingale. Then $X_S = \E_S(X_T)$.
\end{corollary}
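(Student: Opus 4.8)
The plan is to recognise that a $\SL$-martingale is, by definition, simultaneously a $\SL$-submartingale and a $\SL$-supermartingale, and then simply to invoke the two preceding optional stopping theorems in tandem. This turns the corollary into an immediate consequence of the work already done.

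First I would observe that the defining property $X_s = \E_s(X_t)$ for all $s\leq t$ trivially entails both of the one-sided inequalities $X_s \leq \E_s(X_t)$ and $X_s \geq \E_s(X_t)$. Hence $X$ meets the hypotheses of both the bounded optional stopping theorem for submartingales and the corresponding theorem for supermartingales. Before applying them, I would note that the stopped values $X_S$ and $X_T$ cause no difficulty: since $S$ and $T$ are bounded and $X_t\in L^1\cap m\F_t$ for each $t$, the stopped random variables lie in $\H$ (as recorded in the earlier remark on stopped processes at bounded stopping times), so $\E_S(X_T)$ is well defined.

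Next I would apply the bounded optional stopping theorem for submartingales to $X$ with the bounded stopping times $S\leq T$, obtaining $X_S \leq \E_S(X_T)$, and then apply the supermartingale version to obtain the reverse inequality $X_S \geq \E_S(X_T)$. Combining the two yields $X_S = \E_S(X_T)$, as required.

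There is no genuine obstacle here: the result is a formal corollary of the two optional stopping theorems, and the only point needing a moment's attention is the bookkeeping that a martingale qualifies as both a sub- and a supermartingale. The asymmetry warned about in Lemma \ref{lem:negsuperissub} — that negation does not interchange the two classes — is irrelevant, since we never pass from one to the other by multiplication by $-1$; we instead use that the equality $X_s=\E_s(X_t)$ directly supplies membership in \emph{both} classes at once.
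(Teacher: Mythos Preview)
Your proposal is correct and matches the paper's approach: the corollary is stated without proof precisely because a $\SL$-martingale is both a $\SL$-submartingale and a $\SL$-supermartingale, so the equality follows immediately by combining the two preceding bounded optional stopping theorems.
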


We now derive the up/downcrossing inequalities for $\SL$-submartingales.

\begin{theorem}[Up/Downcrossing inequalities for $\SL$-submartingales]
Let $X$ be a $\SL$-submartingale, then for any bounded stopping time $S$,
\[\begin{split}
   \E(M(\omega, X^S;[\alpha, \beta])) &\leq (\beta-\alpha)^{-1}
(\E((X_S-\alpha)^+)-(X_0-\alpha)^+)\\
\E(D(\omega, X^S;[\alpha, \beta])) &\leq -(\beta-\alpha)^{-1}
\E(-(X_S-\beta)^+) \\
&\leq (\beta-\alpha)^{-1} \E((X_S-\beta)^+)
  \end{split}
\]
\end{theorem}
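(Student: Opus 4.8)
My plan is to mimic Doob's classical argument through a predictable ``gambling'' transform, after first reducing both inequalities to crossings of the interval $[0,\beta-\alpha]$ by a single transformed process. Write $\gamma=\beta-\alpha$. Since $S$ is bounded, the stopped process $X^S$ is again a $\SL$-submartingale, and for the upcrossing bound I set $Y_t=(X^S_t-\alpha)^+$. As $x\mapsto(x-\alpha)^+$ is convex and nondecreasing, Jensen's inequality (Lemma~\ref{lem:jensen}) together with monotonicity gives $\E_s(Y_t)\ge(\E_s(X^S_t)-\alpha)^+\ge(X^S_s-\alpha)^+=Y_s$, so $Y$ is a nonnegative $\SL$-submartingale; it lies in $\H$ since $(X_t-\alpha)^+=\tfrac12(|X_t-\alpha|+(X_t-\alpha))$. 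The upcrossings of $[\alpha,\beta]$ by $X^S$ are exactly the upcrossings of $[0,\gamma]$ by $Y$, and $Y_0=(X_0-\alpha)^+$ is deterministic because $\F_0$ is trivial. For the downcrossing bound I would instead use $V_t=(X^S_t-\beta)^+$; here the absence of a convenient monotonicity (the converse of Lemma~\ref{lem:negsuperissub} fails, so $V$ is not a $\SL$-submartingale) is exactly what will push an inner expectation $-\E(-\,\cdot\,)$ into the final estimate.

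Next I would introduce the standard predictable strategy. Let $C_n\in\{0,1\}$ be the $\F_{n-1}$-measurable indicator of ``currently inside an upcrossing'': $C_n=1$ from the first time $Y$ returns to $0$ until the next time it reaches $\gamma$, and $C_n=0$ otherwise. Writing $C_n=I_{D_n}$ with $D_n\in\F_{n-1}$, each increment $C_n(Y_n-Y_{n-1})=I_{D_n}(Y_n-Y_{n-1})$ lies in $\H$, so the discrete integrals $(C\cdot Y)_n=\sum_{k\le n}C_k(Y_k-Y_{k-1})$ and $((1-C)\cdot Y)_n$ are well defined. Because $C_n$ and $1-C_n$ are nonnegative and $\F_{n-1}$-measurable, property~(vi$'$) and the translation identity $\E_t(Z+\lambda)=\E_t(Z)+\lambda$ for $\lambda\in\H_t$ yield
\[\E_{n-1}((C\cdot Y)_n)=(C\cdot Y)_{n-1}+C_n\big(\E_{n-1}(Y_n)-Y_{n-1}\big)\ge(C\cdot Y)_{n-1},\]
so both $(C\cdot Y)$ and $((1-C)\cdot Y)$ are $\SL$-submartingales started at $0$. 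The pathwise bookkeeping of Doob then gives, at the bounded horizon $N$ of $S$, that each completed upcrossing contributes at least $\gamma$ to the betting integral, i.e. $\gamma\,M(\omega,X^S;[\alpha,\beta])\le(C\cdot Y)_N$ pointwise, whence $\gamma\E(M)\le\E((C\cdot Y)_N)$ by monotonicity.

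The crux, and the step I expect to be the main obstacle, is the reverse estimate $\E((C\cdot Y)_N)\le\E(Y_N)-Y_0$. Classically one writes $(C\cdot Y)_N=(Y_N-Y_0)-((1-C)\cdot Y)_N$ and uses \emph{linearity} of the expectation to turn the submartingale bound $\E(((1-C)\cdot Y)_N)\ge0$ into an upper bound on the betting integral. Under a genuinely sublinear $\E$ this is unavailable: subadditivity only delivers $\E((C\cdot Y)_N)\le\E(Y_N)-Y_0+\E(-((1-C)\cdot Y)_N)$, and the remainder $\E(-((1-C)\cdot Y)_N)=-\underline{\E}(((1-C)\cdot Y)_N)$ need not vanish, since a $\SL$-submartingale need not be a submartingale under each representing measure. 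One therefore cannot argue by subadditivity, nor measure by measure, nor by claiming $(C\cdot Y)-Y$ is a $\SL$-supermartingale (it is not). The genuine input must be the recursivity/consistency of $\E$ (axiom~(ii)): I would peel off the last step using the tower property,
\[\E((C\cdot Y)_N)=\E\big(\E_{N-1}((C\cdot Y)_N)\big)=\E\big((C\cdot Y)_{N-1}+C_N(\E_{N-1}(Y_N)-Y_{N-1})\big),\]
use $C_N\le1$ and $\E_{N-1}(Y_N)\ge Y_{N-1}$ to replace $C_N$ by $1$, and iterate together with Lemma~\ref{lem:boundedstoppingconsistent} and the optional stopping theorems just proved, so that the submartingale inequality is invoked only in the favourable (outer-expectation) direction. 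Managing this telescoping without ever appealing to an inner submartingale inequality $-\E_{n-1}(-Y_n)\ge Y_{n-1}$ (which does not hold) is the delicate point.

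Finally, for the downcrossing bound the same machinery applied to $V=(X^S-\beta)^+$, where the submartingale property is genuinely lost, produces the honest estimate $\gamma\E(D)\le-\E(-(X_S-\beta)^+)$ with the inner expectation built in; the last displayed inequality is then merely $-\E(-Z)\le\E(Z)$ applied to $Z=(X_S-\beta)^+\ge0$, an instance of the basic inequality $\E(Z)\ge-\E(-Z)$. Throughout, the routine verifications are that all stopped integrands remain in $\H$ (and their terminal values in $L^1$) and that the crossing times are bounded, so that the optional stopping results apply.
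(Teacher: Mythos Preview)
Your approach diverges from the paper's: you use the predictable ``betting'' transform $(C\cdot Y)$, whereas the paper works with the explicit crossing stopping times $T_1\le T_1'\le T_2\le\cdots$ and the telescoping identity $\sum_k(Y_{T_{k+1}}-Y_{T'_k})+\sum_k(Y_{T'_k}-Y_{T_k})=Y_S-Y_0$. After the pathwise bound $\gamma M'\le Y_S-Y_0+\sum_k(Y_{T_k}-Y_{T'_k})$ the paper applies subadditivity (now in the favourable direction) and then handles each piece by the optional stopping theorem $Y_{T_k}\le\E_{T_k}(Y_{T'_k})$ together with the recursivity at bounded stopping times (Lemma~\ref{lem:boundedstoppingconsistent}). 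The downcrossing bound is obtained from the \emph{same} process $Y=(X^S-\alpha)^+$, not from $V=(X^S-\beta)^+$; the term $-\E(-(X_S-\beta)^+)$ appears because the possibly incomplete last leg in $\sum_k(Y_{R'_k}-Y_{R_k})$ is only bounded below by $-(X_S-\beta)^+$, and subadditivity then separates it as $\E(-(X_S-\beta)^+)$.

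There is a genuine gap in your proposed peeling. The first step is valid and, after undoing the conditional expectation, yields
\[
\E\big((C\cdot Y)_N\big)\le \E\big((C\cdot Y)_{N-1}-Y_{N-1}+Y_N\big).
\]
But the next step stalls. Taking $\E_{N-2}$ and using that $C_{N-1}\in\{0,1\}$ is $\F_{N-2}$-measurable, regularity (iii$'$) gives, on $\{C_{N-1}=0\}$,
\[
\E_{N-2}\big((C\cdot Y)_{N-1}-Y_{N-1}+Y_N\big)=(C\cdot Y)_{N-2}+\E_{N-2}\big(Y_N-Y_{N-1}\big),
\]
and to continue the telescoping you would need $\E_{N-2}(Y_N-Y_{N-1})\le \E_{N-2}(Y_N)-Y_{N-2}$. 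Subadditivity only yields $\E_{N-2}(Y_N-Y_{N-1})\le \E_{N-2}(Y_N)+\E_{N-2}(-Y_{N-1})$, so you are forced back onto the inner inequality $\E_{N-2}(-Y_{N-1})\le -Y_{N-2}$, i.e.\ exactly the lower-expectation submartingale bound $-\E_{N-2}(-Y_{N-1})\ge Y_{N-2}$ that you correctly said is unavailable. In other words, ``replacing $C_N$ by $1$'' works once, but at every subsequent step with $C_n=0$ the argument reintroduces the forbidden inequality; the iteration cannot be closed this way. This is why the paper abandons the increment-by-increment transform and instead packages the off-periods as differences $Y_{T_k}-Y_{T'_k}$ between stopping times, to which optional stopping and Lemma~\ref{lem:boundedstoppingconsistent} can be applied directly.

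Two smaller points. First, your claim that $V_t=(X^S_t-\beta)^+$ fails to be a $\SL$-submartingale is incorrect: the same Jensen/monotonicity argument you used for $Y$ shows it is one, since $x\mapsto(x-\beta)^+$ is convex and nondecreasing. Second, the failure of the converse of Lemma~\ref{lem:negsuperissub} has no bearing on whether $V$ is a submartingale; it only says that a $\SL$-submartingale need not have a $\SL$-supermartingale negative.
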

\begin{proof}
Define $Y_t = (X_t^S-\alpha)^+$, note that by Jensen's inequality and the
optional stopping theorem, $Y$ is a $\SL$-submartingale. Let
\[ M:=M(\omega, X^S; [\alpha, \beta]) \leq M(\omega, Y; ]0, \beta-\alpha[)=:M'\]
and similarly for $D$. As $M, M', D$ and $D'$ are all finite-integer-valued and $\F$-measurable, Definition \ref{defn:Hdefn} implies that they are in $\H$.

We first prove the upcrossing inequality. Define two sequences of stopping times $T$ and $T'$, where
\[\begin{split}
  T_1 &= 0\\
  T'_k(\omega) &= S\wedge \min\{n: n\geq T_k \text{ and }
Y_n(\omega)=0\}\qquad\qquad k=1,2,...\\
  T_k(\omega) &= S\wedge \min\{n: n>T'_{k-1} \text{ and } Y_n(\omega)\geq
\beta-\alpha\} \qquad k=2,3,...\\
  \end{split}
\]
Eventually, we reach a $p$ such that $T_p=T'_p=S$, as $S$ is bounded and we are
in discrete time. Hence
\[\sum_{k=1}^p(Y_{T_{k+1}}-Y_{T'_k})+ \sum_{k=1}^p(Y_{T'_k} -
Y_{T_k})=Y_S-Y_0\]
The first collection of sums each correspond to an upcrossing of the
interval $]0,\beta-\alpha[$ by $Y$. We know that there are exactly $M'$ of
these. Hence
\[M'(\beta-\alpha)\leq
Y_S-Y_0+\sum_{k=1}^p(Y_{T_k}-Y_{T'_k})\]
and so
\begin{equation}\label{eq:upcrossingdecomp}
 (\beta-\alpha)\E(M')\leq \E(Y_S)-Y_0+
\sum_{k=1}^p(\E(Y_{T_k}-Y_{T'_k}))
\end{equation}

By the optional stopping theorem  we know that for each $k$,
$Y_{T_k}\leq \E_{T_k}(Y_{T'_k})$, and so, by the consistency of the
nonlinear expectation at bounded stopping times (Lemma
\ref{lem:boundedstoppingconsistent})
\[\E(Y_{T_k}-Y_{T'_k}) = \E(Y_{T_k}-\E_{T_k}(Y_{T'_k}))\leq 0\]
and the final term of (\ref{eq:upcrossingdecomp}) is nonpositive. Replacing
$Y$ with $(X-\alpha)^+$, the result follows as $M\leq M'$.

Similarly, for the downcrossing inequality,  we have a series of stopping times
\[\begin{split}
  R'_1 &= S\wedge \min\{n: Y_n(\omega)\geq \beta-\alpha\}\\
  R_k(\omega) &= S\wedge \min\{n: n>R'_{k} \text{ and } Y_n(\omega)=0\}
\qquad\qquad\qquad  k=1,2,...\\
  R'_k(\omega) &= S\wedge \min\{n: n>R_{k-1} \text{ and }
Y_n(\omega)\geq \beta-\alpha\}\qquad\qquad k=2,3,...\\
  \end{split}
\]
with $R_p=R'_p=S$ for some sufficiently large $p$.
by the optional stopping
theorem and Lemma \ref{lem:boundedstoppingconsistent}, we see that
$\E(Y_{R'_k}-Y_{R_k}) \leq 0$, and so
\[\E(\sum_{k=1}^p(Y_{R'_k}-Y_{R_k}))\leq \sum_{k=1}^p \E(Y_{R'_k}-Y_{R_k})
\leq 0 .\]
Each nonzero term in this sum, except possibly the last, corresponds to
a descent of $Y$ to zero from a value of at least $\beta-\alpha$. There are
precisely
$D'$ such terms. The last
nonzero term has a value of at least
$-(Y_S-(\beta-\alpha))^+ = -(X_S-\beta)^+$, and hence,
\[0\geq \E((\beta-\alpha)D' - (X_S-\beta)^+)\geq
(\beta-\alpha)\E(D')+\E(-(X_S-\beta)^+)\]
and a simple rearrangement yields the result.
\end{proof}

\begin{corollary}
 Let $X$ be a $\SL$-supermartingale, then for any bounded stopping time $S$,
\[\begin{split}
   \E(M(\omega, X^S; [\alpha, \beta]))
      &\leq(\beta-\alpha)^{-1}\E((X_S-\alpha)^-) \\
   \E(D(\omega, X^S; [\alpha, \beta]))
      &\leq (\beta-\alpha)^{-1}(\E((X_S-\beta)^-)-(X_0-\beta)^-)
  \end{split}\]
\end{corollary}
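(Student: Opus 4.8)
The plan is to reduce everything to the submartingale up/downcrossing theorem just established, applied to the reflected process $-X$. Since $X$ is a $\SL$-supermartingale, Lemma \ref{lem:negsuperissub} tells us that $Y := -X$ is a $\SL$-submartingale. I would then exploit the elementary pathwise fact that reflecting a path through the horizontal axis interchanges upcrossings and downcrossings: a path of $X$ upcrosses $[\alpha,\beta]$ exactly when the corresponding path of $Y=-X$ downcrosses $[-\beta,-\alpha]$, and conversely $X$ downcrosses $[\alpha,\beta]$ exactly when $Y$ upcrosses $[-\beta,-\alpha]$. Because stopping commutes with negation, $Y^S = -(X^S)$, so these correspondences hold for the stopped processes, yielding the pathwise identities
\[
M(\omega, X^S;[\alpha,\beta]) = D(\omega, Y^S;[-\beta,-\alpha]),\qquad
D(\omega, X^S;[\alpha,\beta]) = M(\omega, Y^S;[-\beta,-\alpha]).
\]

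Next I would apply the submartingale theorem to $Y$ on the interval $[-\beta,-\alpha]$, noting that its width $(-\alpha)-(-\beta)=\beta-\alpha$ is unchanged, so the factor $(\beta-\alpha)^{-1}$ is the correct normalising constant. For the first claimed inequality I would use the (weaker form of the) downcrossing bound for $Y$, namely $\E(D(\omega, Y^S;[-\beta,-\alpha]))\leq (\beta-\alpha)^{-1}\E((Y_S-(-\alpha))^+)$; for the second I would use the upcrossing bound for $Y$, namely $\E(M(\omega, Y^S;[-\beta,-\alpha]))\leq (\beta-\alpha)^{-1}(\E((Y_S-(-\beta))^+)-(Y_0-(-\beta))^+)$. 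The translation back to $X$ rests on the positive/negative-part identities $(Y_S+\alpha)^+=(\alpha-X_S)^+=(X_S-\alpha)^-$ and $(Y_S+\beta)^+=(X_S-\beta)^-$, together with $(Y_0+\beta)^+=(X_0-\beta)^-$. Substituting these gives exactly the two displayed bounds of the corollary.

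This argument is essentially bookkeeping once the structural input (that $-X$ is a submartingale, plus the crossing theorem) is in hand, so I do not expect a genuine obstacle. The one point requiring care is the sign accounting: one must fix the orientation of the reflected interval as $[-\beta,-\alpha]$ rather than $[-\alpha,-\beta]$, correctly pair up each crossing direction with its image under reflection, and verify that each positive part of $Y$ matches the intended negative part of $X$. I would therefore double-check the endpoint appearing in each bound (the $-\alpha$ endpoint for the downcrossing bound, the $-\beta$ endpoint for the upcrossing bound) to ensure the right $(X_S-\alpha)^-$ versus $(X_S-\beta)^-$ term emerges, and confirm that the strict monotonicity and bounded optional stopping inputs used in the submartingale proof transfer automatically since they were applied there to an arbitrary submartingale.
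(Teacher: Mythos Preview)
Your proposal is correct and is precisely the approach the paper indicates: it simply says the corollary follows by applying the submartingale up/downcrossing theorem to the $\SL$-submartingale $-X$, and you have carried out that reflection argument in full detail with the correct endpoint and sign bookkeeping.
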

This can be seen by applying the above results to the $\SL$-submartingale $(-X)$.

\begin{remark}
As in the classical case, we can see that these inequalitites will extend directly to the continuous time case, simply by applying the inequality on finite subsets of the rationals, and then using the monotone convergence theorem. Consequently, we can see that, in continuous time, any $\SL$-submartingale has a continuous version, using classical arguments. This result is also useful as an inequality on nonlinear expectations (for example, as given by BSDE solutions) in the classical case.
\end{remark}

\subsection{Martingale convergence}
We now give conditions under which $\SL$-martingales (and related processes)
converge. We shall focus our attention on $\SL$-submartingales, as, by Lemma
\ref{lem:negsuperissub}, this has direct implications for $\SL$-supermartingales
also.

\begin{theorem}
 Let $(X_n)_{n\in{\mathbb{N}}}$ be a $\SL$-submartingale with
$\sup_k\E(|X_k|)<\infty$. Then the
sequence $X_n$ converges quasi-surely to a random variable $X_\infty \in \H^\ext$ with $\E^\ext(|X_\infty|)<\infty$.
\end{theorem}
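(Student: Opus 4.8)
The plan is to mimic the classical martingale convergence proof, replacing the countable use of the upcrossing inequality with its $\SL$-analogue and then controlling the limit via the Fatou lemma. The key quantity is the total number of upcrossings $M(\omega, X; [\alpha, \beta])$ of a fixed rational interval $[\alpha,\beta]$; the strategy is to show that this is finite quasi-surely for every rational pair, which forces the sequence $X_n(\omega)$ to have a (possibly infinite) limit q.s.

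First I would fix rationals $\alpha<\beta$ and apply the upcrossing inequality to the stopped submartingale on a finite horizon: for each bounded stopping time $S=N$ (constant), the theorem gives
\[
\E(M(\omega, X^N; [\alpha, \beta])) \leq (\beta-\alpha)^{-1}\bigl(\E((X_N-\alpha)^+)-(X_0-\alpha)^+\bigr).
\]
Since $(X_N-\alpha)^+ \leq |X_N| + |\alpha|$, the hypothesis $\sup_k \E(|X_k|)<\infty$ bounds the right-hand side uniformly in $N$. As $N\to\infty$, the finite-horizon upcrossing counts $M(\omega, X^N;[\alpha,\beta])$ increase pointwise to the total count $M(\omega, X;[\alpha,\beta])$, so the monotone convergence theorem (Theorem \ref{thm:monconverge}) yields $\E^\ext(M(\omega, X;[\alpha,\beta])) < \infty$. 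Hence $M(\omega, X;[\alpha,\beta])<\infty$ quasi-surely. Taking the countable union over all rational pairs $\alpha<\beta$ (a countable union of polar sets is polar, as in Lemma \ref{lem:strictmonotone}), I conclude that quasi-surely $X_n(\omega)$ makes only finitely many upcrossings of every rational interval, so $\liminf_n X_n(\omega) = \limsup_n X_n(\omega)=:X_\infty(\omega)$ exists in $\R\cup\{\pm\infty\}$ q.s.

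It then remains to show $X_\infty\in\H^\ext$ with $\E^\ext(|X_\infty|)<\infty$, which rules out $\pm\infty$ on a non-polar set. For this I would apply the Fatou lemma (Lemma \ref{lem:Fatou}) to the nonnegative sequence $|X_n|$:
\[
\E^\ext(|X_\infty|) = \E^\ext(\liminf_n |X_n|) \leq \liminf_n \E^\ext(|X_n|) \leq \sup_k \E(|X_k|) < \infty.
\]
Finiteness of $\E^\ext(|X_\infty|)$ forces $|X_\infty|<\infty$ q.s., and it places $X_\infty$ in $\H^\ext$; moreover, q.s.\ convergence of $X_n$ to a finite limit is exactly what the argument above delivers.

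The main obstacle I anticipate is not the limsup/liminf coincidence but the bookkeeping in passing from the finite-horizon upcrossing bound to the total count: one must verify that $M(\omega, X^N;[\alpha,\beta])\uparrow M(\omega, X;[\alpha,\beta])$ pointwise and that these counts genuinely lie in $\H^\ext$ so that $\E^\ext$ may legitimately be applied and the monotone convergence theorem invoked. The earlier remark that upcrossing counts are finite-integer-valued and $\F$-measurable (hence in $\H$, by Definition \ref{defn:Hdefn}) should handle the membership issue, and the uniform bound from $\sup_k\E(|X_k|)<\infty$ handles the integrability. A minor subtlety is that we work with $\E^\ext$ rather than $\E$ throughout the limiting steps, since the total upcrossing count and the limit $X_\infty$ need only lie in $\H^\ext$, not $\H$.
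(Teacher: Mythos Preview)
Your proposal is correct and follows essentially the same approach as the paper: apply the upcrossing inequality on a finite horizon, pass to the limit via the monotone convergence theorem to get $\E^\ext(M(\omega,X;[\alpha,\beta]))<\infty$, take a countable union over rational pairs to force $\limsup X_n=\liminf X_n$ q.s., and then use Fatou's lemma to bound $\E^\ext(|X_\infty|)$. The paper's proof is organized exactly this way, with the same bound $(X_k-\alpha)^+\le |X_k|+|\alpha|$ and the same identification of the exceptional set $H_{\alpha,\beta}=\{\limsup X_k>\beta\}\cap\{\liminf X_k<\alpha\}$.
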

\begin{proof}
 Let $X^k$ denote the process $X$ stopped at the deterministic stopping time
$k$. Then, from the above downcrossing inequalities, for any real
$\alpha<\beta$ values we have
\[\begin{split}
\E(M(\omega, X^k; [\alpha, \beta])) &\leq (\beta-\alpha)^{-1}
[\E((X_k-\alpha)^+) - (X_0-\alpha)^+]\\
&\leq (\beta-\alpha)^{-1}[\E(X_k^+) +|\alpha|]\\
&\leq (\beta-\alpha)^{-1}[\E(|X_k|) +|\alpha|]\\
  \end{split}\]
As $M(\omega, X^k; [\alpha, \beta])$ is monotone increasing to $M(\omega,
X;[\alpha, \beta])$, by the Monotone convergence theorem,
\[\E(M(\omega,X;[\alpha, \beta])) = \sup_k\E(M(\omega,X^k;[\alpha, \beta]))
\leq (\beta-\alpha)^{-1}[\sup_k\E(|X_k|) +|\alpha|]<\infty\]
and hence $M(\omega,X;[\alpha, \beta])$ is quasi-surely finite. Hence, the
event
\[H_{\alpha, \beta} :=\{M(\omega,X;[\alpha, \beta])=\infty\} = \{\lim\sup
X_k>\beta\}\cap \{\lim\inf X_k < \alpha\}\]
is polar. Taking the countable union
\[H := \bigcup_{\alpha, \beta \in \mathbb{Q}, \alpha<\beta} H_{\alpha, \beta}
= \{\omega:\lim\sup X_k > \lim\inf X_k\}\]
we observe that this is again a polar set, and so that $X_k \to X_\infty$ q.s.
for some measurable function $X_\infty$.

As $|X_\infty|$ is nonnegative, measurable and there is a sequence in $\H$ converging to it q.s., $|X_\infty|\in\H^{\ext}$. By the Fatou property, as $\E^{\ext}(|X_k|) = \E(|X_k|)$ for all $k$,
\[\E^\ext(|X_\infty|) = \E^\ext(\lim|X_k|) \leq \lim\inf\E(|X_k|)\leq \sup_k
\E(|X_k|)<\infty.\]
It follows that $X_\infty\in\H^\ext$.
\end{proof}

\begin{remark}
 We would like to show that if $X_\infty\in \H^\ext$ and $\E^\ext(|X_\infty|)<\infty$, then $X_\infty \in L^1$. While this result is believable, it remains an open problem, due to the difficulty of constructing a sequence in $\H$ convergent to $X_\infty$ in $\|\cdot\|_1$. If $X$ is a uniformly integrable $\SL$-submartingale, we shall see that such a result is possible (Theorem \ref{thm:supermartconvergence}).
\end{remark}

\begin{corollary}
 Let $(X_n)_{n\in\mathbb{N}}$ be a $\SL$-supermartingale with
$\sup_k\E(X^-_k)<\infty$. Then the sequence $X_n$ converges quasi-surely to a
random variable $X_\infty \in \H^\ext$ with $\E^\ext(|X_\infty|)<\infty$.
\end{corollary}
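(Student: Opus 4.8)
The plan is to reduce to the preceding $\SL$-submartingale convergence theorem by way of Lemma \ref{lem:negsuperissub}. Since $(X_n)$ is a $\SL$-supermartingale, $(-X_n)$ is a $\SL$-submartingale, so the natural move is to apply the previous theorem to $(-X_n)$. The only gap is that the previous theorem requires $\sup_k \E(|{-X_k}|) = \sup_k \E(|X_k|) < \infty$, whereas our hypothesis controls only the one-sided quantity $\sup_k \E(X_k^-)$. The substantive step, therefore, is to promote this one-sided control to the two-sided bound $\sup_k \E(|X_k|) < \infty$.

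To do this I would use the supermartingale structure to bound the positive part. Writing $|X_k| = X_k + 2X_k^-$, sublinearity gives $\E(|X_k|) \leq \E(X_k) + 2\E(X_k^-)$. The supermartingale inequality with $s = 0$ yields $X_0 \geq \E_0(X_k) = \E(X_k)$, where we use that $\F_0$ is trivial so that $X_0$ is a deterministic constant. Combining these, $\sup_k \E(|X_k|) \leq X_0 + 2\sup_k \E(X_k^-) < \infty$, which is exactly the integrability condition needed.

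With this bound established, the previous theorem applies directly to the $\SL$-submartingale $(-X_n)$, producing a limit $Y_\infty \in \H^\ext$ with $\E^\ext(|Y_\infty|) < \infty$ to which $-X_n$ converges quasi-surely. Setting $X_\infty := -Y_\infty$, the sequence $X_n$ converges quasi-surely to $X_\infty$. To conclude that $X_\infty \in \H^\ext$, I would note that the defining condition of $\H^\ext$ is symmetric under negation, since $(-X)^+ = X^-$ and $(-X)^- = X^+$ interchange the two terms in $\min\{E_\theta[X^+], E_\theta[X^-]\}$; hence $\H^\ext$ is invariant under negation and $\E^\ext(|X_\infty|) = \E^\ext(|Y_\infty|) < \infty$.

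I do not expect any serious obstacle here: once the a priori bound on $\E(|X_k|)$ is in place, everything follows from the theorem just proved together with the symmetry of $\H^\ext$. The only point requiring care is recognising that the hypothesis $\sup_k \E(X_k^-) < \infty$, innocuous as it looks, already forces the full bound $\sup_k \E(|X_k|) < \infty$ by virtue of the supermartingale inequality pinning $\E(X_k)$ below the constant $X_0$.
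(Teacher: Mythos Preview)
Your proposal is correct and follows essentially the same approach as the paper: both use the identity $|X_k| = X_k + 2X_k^-$ together with sublinearity and the supermartingale inequality $\E(X_k) \leq X_0$ to obtain $\sup_k \E(|X_k|) < \infty$, and then invoke the submartingale convergence theorem applied to $(-X)$. Your version is in fact slightly more careful than the paper's, since you explicitly verify that $\H^\ext$ is closed under negation when passing from $Y_\infty$ back to $X_\infty$.
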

\begin{proof}
 As $\E(|X_k|) \leq \E(X_k) + 2\E(X_k^-) \leq \E(X_0) + 2(X_k^-)$, we see
that $\sup_k\E(|X_k|)<\infty$. Then apply the above theorem to the
$\SL$-submartingale $(-X)$.
\end{proof}

\begin{theorem} \label{thm:supermartconvergence}
 Let $\{X_n\}_{n\in\mathbb{N}}$ be a uniformly integrable $\SL$-submartingale.
Then taking $X_\infty = \lim_n X_n$, the process $\{X_n\}_{n\in \mathbb{N}\cup
\{\infty\}}$ is also a uniformly integrable $\SL$-submartingale. In particular, this implies that $X_\infty \in L^1_b$.
\end{theorem}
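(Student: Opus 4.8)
The plan is to establish quasi-sure convergence of the sequence first, upgrade it to convergence in $L^1$ using uniform integrability, and then pass to the limit in the submartingale inequality to put $X_\infty$ in place.

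First I would extract the hypothesis needed for the convergence theorem already proved. Since $\{X_n\}$ is uniformly integrable, part (i) of Theorem \ref{thm:unifintcharacter} gives $\sup_k\E(|X_k|)<\infty$. This is exactly the assumption of the $\SL$-submartingale convergence theorem proved above, so that theorem yields a quasi-sure limit $X_\infty\in\H^\ext$ with $\E^\ext(|X_\infty|)<\infty$. As $X_\infty=\lim_n X_n$ is the random variable named in the statement, this pins down $X_\infty$ and simultaneously places it in $\H^\ext$, which is what is needed to invoke the convergence criterion next.

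Next I would upgrade quasi-sure convergence to convergence in $L^1$. Because every member of a uniformly integrable set lies in $L^1_b$, we have $\{X_n\}\subset L^1_b$, and quasi-sure convergence implies convergence in capacity (by the lemma relating the two modes). Thus the hypotheses of the sufficiency half of Theorem \ref{thm:uiconvergence} are met, and that theorem delivers, in one stroke, that $X_n\to X_\infty$ in $L^1$ norm, that $\{X_n\}_{n\in\mathbb{N}}\cup\{X_\infty\}$ is uniformly integrable, and that $X_\infty\in L^1_b$. This already settles the uniform-integrability assertion and the final claim $X_\infty\in L^1_b$, and in particular guarantees $X_\infty\in L^1$ so that $\E_t(X_\infty)$ is well defined.

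It remains only to verify the submartingale relation at the terminal time, namely $X_t\leq\E_t(X_\infty)$ for every finite $t$; the inequalities among the finite times are given by hypothesis, and the case $s=\infty$ is trivial under the convention $\E_\infty(X_\infty)=X_\infty$. For fixed $t$ and every $n\geq t$ the submartingale property gives $X_t\leq\E_t(X_n)$ quasi-surely. Since $X_n\to X_\infty$ in $L^1$, Lemma \ref{lem:convergenceofconditionalsinL1} gives $\E_t(X_n)\to\E_t(X_\infty)$ in $L^1$; passing to a subsequence along which this last convergence is quasi-sure, I can let $n\to\infty$ in $X_t\leq\E_t(X_n)$ to obtain $X_t\leq\E_t(X_\infty)$ quasi-surely. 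Combined with the previous paragraph, this shows that $\{X_n\}_{n\in\mathbb{N}\cup\{\infty\}}$ is a uniformly integrable $\SL$-submartingale.

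The main obstacle is precisely this final limit passage: $\E_t$ is only known to be continuous in the $L^1$ norm, not pointwise, so one cannot simply take the limit inside the conditional expectation. The resolution is to convert the $L^1$ convergence of the conditionals into a quasi-surely convergent subsequence, along which the pointwise inequality $X_t\leq\E_t(X_{n_k})$ survives. One must also keep in mind that every (in)equality is read modulo quasi-sure equality, which is the sense in which the submartingale relation holds for elements of $L^1$.
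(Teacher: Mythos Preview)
Your proof is correct and follows the same architecture as the paper's: obtain q.s.\ convergence from the earlier submartingale convergence theorem, upgrade to $L^1$ via Theorem~\ref{thm:uiconvergence}, then pass to the limit in the submartingale inequality. The only variation is in that last step: you extract a q.s.\ convergent subsequence of $\E_t(X_n)$, whereas the paper observes that $n\mapsto\E_t(X_n)$ is q.s.\ nondecreasing for $n\geq t$ (by recursivity, $\E_t(X_n)=\E_t(\E_n(X_{n+1}))\leq\E_t(X_{n+1})$), so the full sequence already converges q.s.\ to $\sup_{n>t}\E_t(X_n)=\E_t(X_\infty)$, sidestepping the subsequence argument you flagged as the main obstacle.
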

\begin{proof}
 We know that $X_n\to X_\infty$ quasi-surely, and hence in capacity and, by
uniform integrability and Theorem \ref{thm:uiconvergence}, in $L^1$ norm . Uniform integrability of the extended process follows, and therefore $X_\infty \in L^1_b$.

We only need to verify that for all $t$, $X_t \leq \E_t(X_\infty)$. As $X_n\to X_\infty$ in $L^1$, we know $\E_t(X_n) \to \E_t(X_\infty)$ in $L^1$. By the submartingale property, $\E_t(X_n)$ is q.s. a
nondecreasing sequence as $n\to \infty$, with lower bound $X_t$. Hence, $X_t \leq \sup_{n>t}
\E_t(X_n) = \E_t(X_\infty)$ q.s. as desired.
\end{proof}

\begin{corollary}
 Let $\{X_n\}_{n\in\mathbb{N}}$ be a u.i. $\SL$-(super-)martingale. Then taking
$X_\infty = \lim_n X_n$, the process $\{X_n\}_{n\in \mathbb{N}\cup
\{\infty\}}$ is also a u.i. $\SL$-(super-)martingale.
\end{corollary}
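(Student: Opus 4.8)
The plan is to mirror the proof of Theorem \ref{thm:supermartconvergence} with all inequalities reversed, proving the supermartingale statement directly rather than by negating and invoking the submartingale theorem. First I would establish convergence and uniform integrability of the extended process. Since $\{X_n\}$ is uniformly integrable, Theorem \ref{thm:unifintcharacter}(i) gives $\sup_k\E(|X_k|)<\infty$, so in particular $\sup_k\E(X_k^-)<\infty$, and the supermartingale convergence corollary preceding Theorem \ref{thm:supermartconvergence} yields a quasi-sure limit $X_\infty$. Quasi-sure convergence implies convergence in capacity, so the sufficiency direction of Theorem \ref{thm:uiconvergence} gives that $X_n\to X_\infty$ in $L^1$, that the extended collection $\{X_n\}_{n\in\mathbb{N}\cup\{\infty\}}$ is uniformly integrable, and that $X_\infty\in L^1_b$. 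This disposes of every claim except the supermartingale property of the extended process.

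For the supermartingale property I would fix $t$ and show $X_t\geq\E_t(X_\infty)$ q.s. By Lemma \ref{lem:convergenceofconditionalsinL1}, $L^1$-convergence of $X_n$ to $X_\infty$ gives $\E_t(X_n)\to\E_t(X_\infty)$ in $L^1$. For $t\leq n\leq m$ the supermartingale property $X_n\geq\E_n(X_m)$, together with monotonicity and recursivity (property (ii)), gives $\E_t(X_n)\geq\E_t(\E_n(X_m))=\E_t(X_m)$, so $n\mapsto\E_t(X_n)$ is quasi-surely nonincreasing for $n\geq t$; moreover the supermartingale property at times $t\leq n$ gives the upper bound $X_t\geq\E_t(X_n)$ for every such $n$. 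Passing to a quasi-surely convergent subsequence of the $L^1$-convergent sequence $\E_t(X_n)$, its monotone limit is $\E_t(X_\infty)=\inf_{n>t}\E_t(X_n)$, whence $X_t\geq\E_t(X_\infty)$ q.s.

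The main obstacle, and the reason I avoid the one-line reduction via $-X$, is precisely the failure of the converse of Lemma \ref{lem:negsuperissub}. Applying Theorem \ref{thm:supermartconvergence} to the uniformly integrable submartingale $-X$ only yields $-X_t\leq\E_t(-X_\infty)$, i.e. $X_t\geq-\E_t(-X_\infty)$, which is strictly weaker than the required $X_t\geq\E_t(X_\infty)$, since in general $\E_t(X_\infty)\geq-\E_t(-X_\infty)$. Hence the supermartingale property genuinely has to be re-derived. What makes the direct argument succeed is that the monotonicity of $n\mapsto\E_t(X_n)$ reverses relative to the submartingale case, so that the controlling quantity is an infimum rather than a supremum, and $X_t$ dominates it from above exactly as a supermartingale requires.

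Finally, the martingale case follows formally: a uniformly integrable $\SL$-martingale is simultaneously a uniformly integrable $\SL$-submartingale and $\SL$-supermartingale, so Theorem \ref{thm:supermartconvergence} and the supermartingale case just established give both $X_t\leq\E_t(X_\infty)$ and $X_t\geq\E_t(X_\infty)$, hence $X_t=\E_t(X_\infty)$ q.s.; uniform integrability of the extended process is inherited from either case.
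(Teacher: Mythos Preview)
Your proposal is correct and follows essentially the same approach as the paper, which simply states that the supermartingale case ``follows exactly as for $\SL$-submartingales'' (i.e., redo the proof of Theorem \ref{thm:supermartconvergence} with the inequalities reversed) and handles the martingale case by combining the sub- and supermartingale statements. Your additional remark explaining why the naive reduction via $-X$ and Lemma \ref{lem:negsuperissub} fails to yield $X_t\geq\E_t(X_\infty)$ is a genuine clarification not spelled out in the paper.
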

\begin{proof}
 For $\SL$-supermartingales, this result follows exactly as for
$\SL$-submartingales. For $\SL$-martingales, note that a $\SL$-martingale is both
a $\SL$-submartingale and a $\SL$-supermartingale.
\end{proof}

\begin{theorem}[Optional stopping]\label{thm:optionalstoppingfull}
 Let $S,T$ be any stopping times $S\leq T$, and  $\{X_n\}_{n\in \mathbb{N}}$ be a
u.i. $\SL$-martingale. Defining $X_\infty = \lim_{n\to\infty} X_n$, we have $X_S=\E_S(X_T)$ q.s.
\end{theorem}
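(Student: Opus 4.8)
The plan is to reduce the unbounded case to the bounded optional stopping theorem by approximating $S$ and $T$ with the bounded stopping times $S\wedge k$ and $T\wedge k$, and then to pass to the limit using the $L^1$-convergence machinery of Theorem~\ref{thm:uiconvergence}. First I would record the consequences of the preceding corollary: since $\{X_n\}_{n\in\mathbb{N}\cup\{\infty\}}$ is a u.i. $\SL$-martingale, we have $X_t=\E_t(X_\infty)$ for every finite $t$ and $X_\infty\in L^1_b$. For any bounded stopping time $R$ with bound $m$, the bounded optional stopping corollary together with the tower property (ii) gives $X_R=\E_R(X_m)=\E_R(X_\infty)$.

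The heart of the argument is an auxiliary uniform integrability estimate: \emph{the family $\{\E_R(X_\infty):R\text{ a bounded stopping time}\}$ is uniformly integrable}. To see this I would use $|\E_R(X_\infty)|\le\E_R(|X_\infty|)$ (from sublinearity together with $-\E_R(Y)\le\E_R(-Y)$), so that on the set $A_c:=\{\E_R(|X_\infty|)\ge c\}\in\F_R$ one has, by locality and the bounded recursivity of Lemma~\ref{lem:boundedstoppingconsistent},
\[\E\big(I_{\{|\E_R(X_\infty)|\ge c\}}|\E_R(X_\infty)|\big)\le\E\big(I_{A_c}\E_R(|X_\infty|)\big)=\E\big(\E_R(I_{A_c}|X_\infty|)\big)=\E(I_{A_c}|X_\infty|),\]
while $\E(I_{A_c})\le c^{-1}\E(|X_\infty|)$ is small uniformly in $R$. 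Since $X_\infty\in L^1_b$ makes $\{X_\infty\}$ uniformly integrable, Theorem~\ref{thm:unifintcharacter} forces $\E(I_{A_c}|X_\infty|)\to0$ uniformly in $R$, giving the claim. Consequently, for \emph{any} stopping time $R$, the approximations $X_{R\wedge k}=\E_{R\wedge k}(X_\infty)$ form a u.i. family converging q.s. (hence in capacity) to $X_R$, so Theorem~\ref{thm:uiconvergence} yields $X_{R\wedge k}\to X_R$ in $L^1$ and $X_R\in L^1_b$.

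With this in hand I would prove the deterministic-base case $\E_n(X_R)=X_n$ q.s. for every fixed $n$ and every stopping time $R\ge n$: for $k\ge n$ the times $n\le R\wedge k$ are bounded, so bounded optional stopping gives $X_n=\E_n(X_{R\wedge k})$, and since $\E_n$ acts continuously on $L^1$ (Lemma~\ref{lem:convergenceofconditionalsinL1}) we may let $k\to\infty$ to obtain $X_n=\E_n(X_R)$. Finally I would localise. Partitioning $\Omega$ by the value of $S$, on $\{S=n\}\in\F_n$ we have $S\le T$, hence $T\ge n$ and $X_T=X_{T\vee n}$; applying the base case to $R=T\vee n\ge n$ and using locality (iii$'$),
\[I_{\{S=n\}}\E_n(X_T)=\E_n(I_{\{S=n\}}X_{T\vee n})=I_{\{S=n\}}\E_n(X_{T\vee n})=I_{\{S=n\}}X_n=I_{\{S=n\}}X_S.\]
Summing over $n\in\mathbb{N}$ and noting that on $\{S=\infty\}$ both sides equal $X_\infty$, I conclude $\E_S(X_T)=X_S$ q.s.

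I expect the main obstacle to be the uniform integrability estimate for the sampled family $\{\E_R(X_\infty)\}$: it is exactly what upgrades the easy q.s. convergence $X_{R\wedge k}\to X_R$ to $L^1$ convergence, and it is the only place where the u.i. hypothesis on the martingale (rather than mere $L^1$-boundedness) is genuinely used. The delicate points there are verifying $A_c\in\F_R$ (i.e. the $\F_R$-measurability of $\E_R(|X_\infty|)$) and invoking locality and bounded recursivity at the random time $R$, all of which are legitimate in discrete time but must be checked.
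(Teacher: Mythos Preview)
Your argument is correct. The paper's proof is considerably shorter but rests on the same key ingredient---the uniform integrability of the sampled family---which it invokes implicitly. The paper simply asserts that the stopped process $X^T$ is again a u.i.\ $\SL$-martingale, applies the preceding corollary to conclude $X^T_t=\E_t(X^T_\infty)=\E_t(X_T)$ for every finite $t$, and then localises on the value of $S$ exactly as you do. Your auxiliary estimate for $\{\E_R(X_\infty):R\text{ bounded}\}$ is precisely what is needed to justify the paper's claim that $X^T$ is u.i., so you have made explicit a step the paper leaves to the reader. Where you then pass to the limit by hand via $X_{R\wedge k}\to X_R$ in $L^1$ and continuity of $\E_n$, the paper achieves the same conclusion in one line by citing the already-proved convergence corollary for the u.i.\ martingale $X^T$. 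Both routes are sound; yours is more self-contained, the paper's more economical.
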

\begin{proof}
 As $X$ is a u.i. $\SL$-martingale, we know $X^T$ is also a u.i. $\SL$-martingale, and $X_t^T=\E_t(X^T_\infty)$ q.s. As $\mathbb{N}$ is countable, we then have $X_{S(\omega)}=\E_{S(\omega)}(X^T_\infty) = \E_{S(\omega)}(X_{T(\omega)})$ q.s. and the result is clear.
\end{proof}

\begin{theorem}
Let $X\in L^1_b$. Then $Y_t:=\E_t(X)$ is a u.i. $\SL$-martingale.
\end{theorem}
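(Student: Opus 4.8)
The plan is to verify the three defining requirements in turn: that each $Y_t=\E_t(X)$ lies in $L^1\cap m\F_t$, that $Y$ satisfies the martingale identity $Y_s=\E_s(Y_t)$ for $s\leq t$, and that the family $\{Y_t\}_t$ is uniformly integrable. Throughout I would use that the elementary properties of the operators $\E_t$ (monotonicity, translation-invariance, positive homogeneity, subadditivity, and the recursivity property (ii)) extend from $\H$ to $L^1_b$ by $L^1$-continuity: given $Z\in L^1_b$ and bounded $Z_n\in\H$ with $Z_n\to Z$ in $\|\cdot\|_1$, Lemma \ref{lem:convergenceofconditionalsinL1} gives $\E_t(Z_n)\to\E_t(Z)$ in $L^1$, so any inequality or identity valid for the $Z_n$ in $\H$ passes to the limit (along a q.s.-convergent subsequence where a pointwise statement is needed, and after replacing $Z_n$ by a sign-adjusted approximant such as $Z_n^+$ when nonnegativity is used). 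Note also $|X|\in L^1_b$ whenever $X\in L^1_b$, since $\big||X_n|-|X|\big|\leq|X_n-X|$.

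Integrability and measurability are the quick parts. Membership $Y_t\in L^1_b\subset L^1$ is exactly the earlier lemma stating that $\E_t$ maps $L^1_b$ into $L^1_b$, and $\F_t$-measurability holds because $Y_t$ is an $L^1$-limit of the $\F_t$-measurable functions $\E_t(X_n)\in\H_t$, so $Y_t\in L^1(\F_t)=L^1\cap m\F_t$. The martingale property is then immediate from recursivity: for $s\leq t$ I would approximate $X$ by bounded $X_n\in\H$, use $\E_s(\E_t(X_n))=\E_s(X_n)$ on $\H$, and pass to the limit using that $\E_t(X_n)\to\E_t(X)$ in $L^1$ and that $\E_s$ is an $L^1$-contraction; this yields $\E_s(Y_t)=\E_s(\E_t(X))=\E_s(X)=Y_s$ q.s.

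The substantive step is uniform integrability, which I would establish through the characterisation in Theorem \ref{thm:unifintcharacter}. The basic estimate is $|Y_t|=|\E_t(X)|\leq\E_t(|X|)$, obtained by passing the $\H$-level inequality $|\E_t(X_n)|\leq\E_t(|X_n|)$ to the limit. Applying the tower property $\E(\E_t(\,\cdot\,))=\E(\,\cdot\,)$ to $|X|\in L^1_b$ then gives $\E(|Y_t|)\leq\E(\E_t(|X|))=\E(|X|)<\infty$ uniformly in $t$, which is condition (i). For condition (ii), I would truncate: writing $|X|\leq K+(|X|-K)^+$ with $(|X|-K)^+=|X|-|X|\wedge K\in L^1_b$, monotonicity, translation-invariance and subadditivity give, for any $A\in\F$,
\[
\E(I_A|Y_t|)\leq\E\big(I_A\,\E_t(|X|)\big)\leq K\,\E(I_A)+\E\big(I_A\,\E_t((|X|-K)^+)\big)\leq K\,\E(I_A)+\E\big((|X|-K)^+\big),
\]
where the last inequality uses $I_A\leq 1$, nonnegativity of $\E_t((|X|-K)^+)$, and the tower property. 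Since $X\in L^1_b$, the characterisation of $L^1_b$ gives $\E((|X|-K)^+)\leq\E(|X|\,I_{\{|X|>K\}})\to 0$ as $K\to\infty$; so given $\epsilon>0$ I fix $K$ with $\E((|X|-K)^+)<\epsilon/2$ and then set $\delta=\epsilon/(2K)$, making the bound smaller than $\epsilon$ whenever $\E(I_A)\leq\delta$, uniformly in $t$. By Theorem \ref{thm:unifintcharacter} the family $\{Y_t\}$ is uniformly integrable (and hence contained in $L^1_b$), completing the proof.

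I expect the main obstacle to be bookkeeping rather than depth: the only real care needed is in justifying that the tower property and the order/sublinearity inequalities for $\E_t$ remain valid for $L^1_b$-arguments such as $|X|$ and $(|X|-K)^+$, and in keeping the truncation estimate uniform in $t$ — which it is, since every bound above depends on $X$ and $K$ but never on $t$.
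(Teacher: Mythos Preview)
Your argument is correct, but the paper reaches uniform integrability more directly. After noting (as you do) that $|Y_t|\leq\E_t(|X|)$, the paper exploits that the level set $\{|Y_t|>c\}$ is $\F_t$-measurable: property (iii) and the tower property then give
\[
\E\big(I_{\{|Y_t|>c\}}\,|Y_t|\big)\leq \E\big(I_{\{|Y_t|>c\}}\,\E_t(|X|)\big)=\E\big(\E_t(I_{\{|Y_t|>c\}}\,|X|)\big)=\E\big(I_{\{|Y_t|>c\}}\,|X|\big),
\]
and this tends to $0$ uniformly in $t$ because $\{X\}\subset L^1_b$ is itself u.i.\ and $\E(I_{\{|Y_t|>c\}})\leq \E(|X|)/c$. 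Your route via the characterisation in Theorem~\ref{thm:unifintcharacter} with the truncation $|X|\leq K+(|X|-K)^+$ is perfectly valid, and has the minor advantage that your condition~(ii) estimate works for arbitrary $A\in\F$, not just $\F_t$-measurable ones; the price is a few more lines. Either way the essential ingredients are the same: the Jensen-type bound $|Y_t|\leq\E_t(|X|)$, the tower property, and the fact that a single element of $L^1_b$ is u.i.
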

\begin{proof}
 Clearly $Y$ is a $\SL$-martingale. To show it is uniformly integrable, note that
by Jensen's inequality,
\[\E(I_{\{|Y_t|>c\}} |Y_t|) \leq \E(I_{\{|Y_t|>c\}} |X|)\]
and as $X$ is in $L^1_b$, this converges uniformly to $0$ as $c\to \infty$.
\end{proof}

We can now extend the recursivity of $\E$ to all stopping times, when $\E$ is
restricted to $L^1_b$.

\begin{theorem}\label{thm:recursivityatstoppingtimes}
 Let $S,T$ be any stopping times, $S\leq T$, and $X\in L^1_b$. Then
$\E_S(X)=\E_S(\E_T(X))$.
\end{theorem}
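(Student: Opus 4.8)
The plan is to realise both sides of the identity as the values of a single closed martingale evaluated at $S$ and $T$, and then to invoke the optional stopping theorem for uniformly integrable $\SL$-martingales. Concretely, set $Y_t := \E_t(X)$. Since $X\in L^1_b$, the preceding theorem shows that $Y$ is a uniformly integrable $\SL$-martingale, so by the martingale-convergence results (Theorem \ref{thm:supermartconvergence} and the following corollary) the limit $Y_\infty := \lim_n Y_n$ exists q.s. and $\{Y_n\}_{n\in\mathbb{N}\cup\{\infty\}}$ is again a u.i. $\SL$-martingale. Applying Theorem \ref{thm:optionalstoppingfull} to $Y$ with $S\le T$ yields $Y_S=\E_S(Y_T)$ q.s. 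It therefore suffices to identify $Y_S$ with $\E_S(X)$ and $Y_T$ with $\E_T(X)$, and to know that $\E_S(\cdot)$ may be applied to q.s.-equal integrands without changing the result. The latter is a one-line consequence of Lemma \ref{lem:strictmonotone}: if $U=V$ q.s. then $\E(\E_n(|U-V|))=\E(|U-V|)=0$, so $\E_n(|U-V|)=0$ q.s. for every $n$, whence $\E_S(U)=\E_S(V)$ q.s.

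By the definition of conditioning at a stopping time, $\E_T(X)(\omega)=\E_{T(\omega)}(X)(\omega)=Y_{T(\omega)}(\omega)$ on $\{T<\infty\}$, and similarly for $S$; so on $\{T<\infty\}$ we have $\E_T(X)=Y_T$ and the identification is immediate, while on $\{S=\infty\}$ (which forces $T=\infty$) both sides of the claimed identity collapse to $X$ by the convention $\E_\infty(X)=X$. The only genuine gap is on $\{T=\infty\}$, where the definition gives $\E_T(X)=X$ but the stopped martingale gives $Y_T=Y_\infty$. Thus the whole proof reduces to the terminal identification
\[ Y_\infty=\lim_n \E_n(X)=X \qquad\text{q.s.}, \]
an upward (L\'evy-type) convergence theorem for $\SL$-conditional expectations on $L^1_b$. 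This is the main obstacle.

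To establish the terminal identification I would proceed as follows. When $X$ is bounded and $\F_m$-measurable for some finite $m$, axiom (iv) gives $\E_n(X)=X$ for all $n\ge m$, so $Y_\infty=X$ trivially. For general $X\in L^1_b$, the operators $\E_n$ are $\|\cdot\|_1$-contractions (Lemma \ref{lem:convergenceofconditionalsinL1}), so $\|\E_n(X)-\E_n(Z)\|_1\le\|X-Z\|_1$; combined with the triangle inequality this reduces the statement $\|\E_n(X)-X\|_1\to 0$ to the same statement for a $\|\cdot\|_1$-dense family of time-localised approximants $Z$. Once $\E_n(X)\to X$ in $L^1$ is known, uniqueness of limits (the sequence also converges q.s., hence in $L^1$, to $Y_\infty$) forces $Y_\infty=X$ in $L^1$, and therefore q.s.

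The delicate point, where I expect the real work to lie, is the $\|\cdot\|_1$-density of the bounded $\F_m$-measurable functions in $L^1_b$ under $\|\cdot\|_1=\sup_{\theta\in\Theta}E_\theta[|\cdot|]$: the approximation must be \emph{uniform} over the representing family $\Theta$, and $\H$ has deliberately not been assumed closed under countable pasting. I would attack this using the hypothesis $\F=\F_\infty=\F_{\infty-}$ together with the monotone continuity (Fatou) property, which controls $A\mapsto\E(I_A W)$ along monotone limits and should let a monotone-class argument promote the agreement of $\E(I_A Y_\infty)$ and $\E(I_A X)$ from the generating algebra $\bigcup_m\F_m$ to all of $\F$. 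An alternative route, probably closer to the mechanism actually at work, is to argue measure by measure: the $\F_t$-consistency axioms encode a stability of $\Theta$ under conditioning, which should give $\E_n(X)\ge E_\theta[X\,|\,\F_n]$ $\theta$-a.s. for each $\theta$, and classical L\'evy convergence then pins $Y_\infty$ between $X$ and itself $\theta$-a.s. for every $\theta$, that is, q.s.
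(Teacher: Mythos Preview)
Your core strategy---set $Y_t:=\E_t(X)$, observe it is a u.i.\ $\SL$-martingale by the preceding theorem, and invoke Theorem~\ref{thm:optionalstoppingfull} to obtain $Y_S=\E_S(Y_T)$---is exactly the paper's proof. The paper's argument is two sentences long and stops at ``$Y_S=\E_S(Y_T)$, as desired.''

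Where you go further is in isolating the terminal identification $Y_\infty=\lim_n\E_n(X)=X$ q.s.\ on $\{T=\infty\}$, needed because the paper's convention is $\E_\infty(X)=X$ while Theorem~\ref{thm:optionalstoppingfull} feeds in $Y_\infty:=\lim_n Y_n$. You are right that this is a genuine point, and the paper simply does not address it: the two-line proof silently treats $Y_T$ and $\E_T(X)$ as interchangeable. So you have not taken a different route; you have taken the same route and noticed a pothole the paper drives over.

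On your proposed fixes: both are plausible directions but neither closes the gap within the paper's stated framework. The density argument (bounded $\F_m$-measurable functions $\|\cdot\|_1$-dense in $L^1_b$) is essentially equivalent to the L\'evy-type statement you want, and nothing in Definition~\ref{defn:Hdefn} or the construction of $L^1_b$ supplies it. The measure-by-measure argument needs $\E_n(X)\ge E_\theta[X\mid\F_n]$ $\theta$-a.s., which is a stability/rectangularity property of $\Theta$; the paper only proves the representation (\ref{eq:supremumrepresentation}) for $\E_0$, not for the conditional operators, so this inequality is not available without further work. In short: your reduction to the terminal identification is correct and sharper than the paper, but the identification itself is left open both in the paper and in your proposal.
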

\begin{proof}
 Let $Y_t:=\E_t(X)$. Then $Y$ is a u.i. martingale and by
Theorem \ref{thm:optionalstoppingfull}, $Y_S=\E_S(Y_T)$, as desired.
\end{proof}

\subsection{Symmetry and Doob Decomposition}

\begin{definition}
A random variable $X\in\H$ (or $L^p$) will be called symmetric if
\[\mathcal{E}_t(X) = -\mathcal{E}_t(-X)\]
for all $t$.

A stochastic process $X$ will be called symmetric if each of its values $X_t$ is
a symmetric random variable.
\end{definition}

\begin{lemma}
A $\SL$-martingale $X$ is symmetric if and only if $-X$ is also a $\SL$-martingale,
that is, if
\begin{equation}\label{eq:symdefn1}
X_s=\mathcal{E}_s(X_t) = -\mathcal{E}_s(-X_t)
\end{equation}
for all $s<t$.
\end{lemma}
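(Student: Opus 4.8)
The plan is to establish the equivalence directly from the defining relations, unwinding the $\SL$-martingale property and the definition of symmetry; no convergence or completeness machinery beyond the elementary properties of $\E_s$ is required.

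First I would treat the forward implication. Assuming $X$ is a symmetric $\SL$-martingale, the martingale property gives $X_s = \E_s(X_t)$ for all $s\leq t$, while symmetry of the value $X_t$ gives $\E_s(X_t) = -\E_s(-X_t)$. Combining these yields $-X_s = \E_s(-X_t)$, i.e.\ $(-X)_s = \E_s((-X)_t)$, which is precisely the $\SL$-martingale property for $-X$. The only side condition is integrability, and this is immediate: since $X_t\in L^1\cap m\F_t$, closure of $m\F_t$ under negation together with $\|-X_t\|_1 = \E(|X_t|) = \|X_t\|_1 < \infty$ gives $-X_t\in L^1\cap m\F_t$. Hence $-X$ is a $\SL$-martingale.

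For the converse I would assume both $X$ and $-X$ are $\SL$-martingales, so that $X_s = \E_s(X_t)$ and $-X_s = \E_s(-X_t)$ for all $s\leq t$. Eliminating $X_s$ gives $\E_s(X_t) = -\E_s(-X_t)$, which together with the martingale property is exactly equation (\ref{eq:symdefn1}) and shows each $X_t$ is symmetric on the range $s<t$ that carries content. To conclude that $X$ is a symmetric process in the full sense of the definition I would then dispatch the remaining indices: for $s\geq t$ the variable $X_t$ is $\F_s$-measurable, so $\E_s(X_t) = X_t = -\E_s(-X_t)$ holds trivially by property (iv) (read on $L^1$ via Lemma \ref{lem:convergenceofconditionalsinL1}). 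Thus symmetry holds for every $s$ and $X$ is symmetric.

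Since the whole argument is a short manipulation of the defining identities, I do not expect a genuine obstacle. The only points needing care are keeping track of the index range in the definition of a symmetric random variable (symmetry is demanded for all times, whereas the martingale relations only supply it for $s\leq t$) and noting that $L^1$ is closed under negation so that $-X$ again takes values in $L^1\cap m\F_t$.
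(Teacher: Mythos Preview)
Your argument is correct and follows essentially the same approach as the paper, which simply says to multiply (\ref{eq:symdefn1}) through by $-1$ and observe that this is exactly the $\SL$-martingale property for $-X$. You have spelled out more of the details---the integrability of $-X_t$ and the trivial case $s\geq t$---but these are precisely the routine checks the paper leaves implicit.
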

\begin{proof}
Simply multiply (\ref{eq:symdefn1}) through by $-1$, and note that this holds if
and only if $-X$ is a $\SL$-martingale.
\end{proof}

\begin{lemma}
This is equivalent to stating
\begin{equation}\label{eq:symdefn2}
X_{t-1} = -\mathcal{E}_{t-1}(-X_t)
\end{equation}
for all $t$.
\end{lemma}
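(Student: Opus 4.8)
The plan is to observe that, since $X$ is assumed throughout to be a $\SL$-martingale, the relation $X_s=\E_s(X_t)$ for $s\le t$ holds automatically; hence the entire content of the symmetry condition (\ref{eq:symdefn1}) is the additional ``anti-martingale'' identity $X_s=-\E_s(-X_t)$ for all $s<t$, and (\ref{eq:symdefn2}) is precisely its one-step special case $s=t-1$. The equivalence therefore reduces to showing that the one-step identity propagates to all pairs $s<t$.

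One direction is immediate: if (\ref{eq:symdefn1}) holds for all $s<t$, then specialising to $s=t-1$ yields (\ref{eq:symdefn2}). For the converse I would argue by induction on the gap $n=t-s\ge 1$. The base case $n=1$ is exactly (\ref{eq:symdefn2}). For the inductive step I would peel off the final time step using the recursivity property (ii) of an $\F_t$-consistent expectation, writing $\E_s(-X_t)=\E_s(\E_{t-1}(-X_t))$, then substitute $\E_{t-1}(-X_t)=-X_{t-1}$ from (\ref{eq:symdefn2}) to obtain $\E_s(-X_t)=\E_s(-X_{t-1})$, and finally apply the inductive hypothesis $\E_s(-X_{t-1})=-X_s$. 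This gives $-\E_s(-X_t)=X_s$, which together with the standing martingale property $X_s=\E_s(X_t)$ recovers (\ref{eq:symdefn1}).

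The one subtlety to flag is that one cannot simply multiply (\ref{eq:symdefn2}) through by $-1$ and invoke linearity, since $\E_s$ is only sublinear; in particular $\E_s(X_t)\neq-\E_s(-X_t)$ in general. This is exactly why the martingale half $X_s=\E_s(X_t)$ must be carried along from the hypothesis that $X$ is a $\SL$-martingale, rather than being deduced from (\ref{eq:symdefn2}) alone. The genuine work is the propagation of the anti-martingale identity, and the tower property (ii) is the only tool needed; no appeal to sublinearity, monotonicity, or the monotone continuity property is required. I would not expect any real obstacle here beyond bookkeeping the induction and being careful not to conflate the two halves of (\ref{eq:symdefn1}).
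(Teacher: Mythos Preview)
Your proposal is correct and takes essentially the same approach as the paper: both arguments dispose of the forward direction trivially and establish the converse by induction on the gap $t-s$, using only the tower property (ii) together with the one-step identity (\ref{eq:symdefn2}) to peel off a single time step. The paper writes the inductive step as $X_{t-2}=-\E_{t-2}(-X_{t-1})=-\E_{t-2}(-\E_{t-1}(X_t))=-\E_{t-2}(\E_{t-1}(-X_t))=-\E_{t-2}(-X_t)$, which is just a rearrangement of your chain $\E_s(-X_t)=\E_s(\E_{t-1}(-X_t))=\E_s(-X_{t-1})=-X_s$; the ingredients and logic are identical.
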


\begin{proof}
Clearly (\ref{eq:symdefn1}) implies (\ref{eq:symdefn2}). To show the converse,
note that if $X$ is a $G$-martingale, then
$0=\mathcal{E}_{t-1}(X_t-X_{t-1})$. Hence, from (\ref{eq:symdefn2}),
\[\begin{split}
X_{t-2} &= -\mathcal{E}_{t-2}(-X_{t-1})=
-\mathcal{E}_{t-2}(-\mathcal{E}_{t-1}(X_{t}))\\&=
-\mathcal{E}_{t-2}(\mathcal{E}_{t-1}(-X_{t})) =
-\mathcal{E}_{t-2}(-X_{t}).\end{split}\]
The case for general $s<t$ follows by induction.
\end{proof}

We now give the analogue of the Doob decomposition theorem. Note that, in this
context, a process $Y$ is predictable if $Y_t\in m\F_{t-1}$ for all $t$.

\begin{theorem}\label{thm:compensatorexists}
For any adapted process $\{X_n\}_{n\in\mathbb{N}}\subset L^1$, there exists a unique
predictable process $\hat X$
such that $X-\hat X$ is a $\SL$-martingale and $\hat X_0=0$. $\hat X$ is called
the $\E$-\emph{compensator} of $X$.
\end{theorem}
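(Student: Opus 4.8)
The plan is to define $\hat X$ explicitly through its predictable increments, and to obtain existence and uniqueness simultaneously from the martingale requirement. The key observation is that if $\hat X$ is predictable, so $\hat X_t\in\H_{t-1}$, then by property (ii) (the tower property) the condition ``$X-\hat X$ is a $\SL$-martingale'' is equivalent to the family of one-step identities $(X-\hat X)_{t-1}=\E_{t-1}((X-\hat X)_t)$. Since $\hat X_t$ is $\F_{t-1}$-measurable, property (iv) gives $\E_{t-1}(\hat X_t)=\hat X_t=-\E_{t-1}(-\hat X_t)$, so the earlier additivity lemma (the analogue of \cite[Prop. 3.6]{Peng2010}, extended to $L^1$ via Lemma \ref{lem:convergenceofconditionalsinL1}) lets us extract the predictable term from the conditional expectation: $\E_{t-1}(X_t-\hat X_t)=\E_{t-1}(X_t)-\hat X_t$. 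The one-step condition therefore reads $X_{t-1}-\hat X_{t-1}=\E_{t-1}(X_t)-\hat X_t$, which is exactly the recursion
\[\hat X_t=\hat X_{t-1}+\big(\E_{t-1}(X_t)-X_{t-1}\big).\]

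First I would define $\hat X$ by this recursion with $\hat X_0=0$, that is $\hat X_t=\sum_{k=1}^t(\E_{k-1}(X_k)-X_{k-1})$, and check well-posedness: by Lemma \ref{lem:convergenceofconditionalsinL1} each $\E_{k-1}(X_k)$ lies in $L^1\cap m\F_{k-1}$, so every summand is in $L^1$ and $\F_{k-1}$-measurable, whence $\hat X_t\in L^1\cap m\F_{t-1}$. This establishes both predictability and integrability, with $\hat X_0=0$. Next I would verify that $X-\hat X$ is a $\SL$-martingale: by construction the one-step identity holds for every $t$, and iterating it via the tower property (ii) yields $(X-\hat X)_s=\E_s((X-\hat X)_t)$ for all $s\le t$, while $X_t-\hat X_t\in L^1\cap m\F_t$ for each $t$, as required.

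Uniqueness then follows from the same computation read in reverse. If $\hat X'$ is any predictable process with $\hat X'_0=0$ for which $X-\hat X'$ is a $\SL$-martingale, then the extraction step forces $\hat X'_t-\hat X'_{t-1}=\E_{t-1}(X_t)-X_{t-1}$ for every $t$, which is precisely the recursion defining $\hat X$; since $\hat X'_0=0=\hat X_0$, a trivial induction gives $\hat X'=\hat X$.

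I expect the main obstacle to be the extraction identity $\E_{t-1}(X_t-\hat X_t)=\E_{t-1}(X_t)-\hat X_t$, rather than the recursion itself. In the sublinear setting one cannot argue uniqueness by subtracting two martingales, since the difference of two $\SL$-martingales need not be a $\SL$-martingale; it is therefore essential that $\hat X_t$, being $\F_{t-1}$-measurable, behaves like a symmetric additive term under $\E_{t-1}$, which is exactly what makes the predictable part cancel cleanly and simultaneously delivers existence and uniqueness. The only care needed is to confirm that the additivity $\E_{t-1}(Z+\alpha)=\E_{t-1}(Z)+\alpha$ for $\alpha\in L^1(\F_{t-1})$ survives the passage from $\H$ to $L^1$; this follows by approximating in $\|\cdot\|_1$ and invoking Lemma \ref{lem:convergenceofconditionalsinL1}.
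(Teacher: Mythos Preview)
Your proof is correct and follows essentially the same route as the paper: both define $\hat X$ via the recursion $\hat X_t=\hat X_{t-1}+\E_{t-1}(X_t)-X_{t-1}$ with $\hat X_0=0$, verify the one-step martingale identity using the translation property $\E_{t-1}(X_t-\hat X_t)=\E_{t-1}(X_t)-\hat X_t$, and read off uniqueness from the fact that the recursion is forced. You are somewhat more explicit than the paper about integrability and about why the extraction identity survives the passage to $L^1$, but the argument is the same.
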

\begin{proof}
Let $\hat X_0=0$. Then define inductively
\[\hat X_t = \mathcal{E}_{t-1}(X_t)-(X_{t-1} - \hat X_{t-1}).\]
It follows that
\[\mathcal{E}_{t-1}(X_t-\hat X_t) =
\mathcal{E}_{t-1}(X_t)-\mathcal{E}_{t-1}(X_t)+(X_{t-1} - \hat X_{t-1})= X_{t-1}
- \hat X_{t-1}.\]
That $X_t-\hat X_t$ is a $\SL$-martingale follows by recursion. The uniqueness of
$\hat X_t$, given the uniqueness of $\hat X_{t-1}$ is easy to verify, as the
addition of any nonzero $\mathcal{F}_{t-1}$ measurable quantity would destroy
the desired equality.
\end{proof}

\begin{lemma}\label{lem:compensatorsymmetry}
A process $X$ is symmetric if and only if $-\hat X$ is the $\E$-compensator of
$-X$.
\end{lemma}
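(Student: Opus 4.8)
The plan is to route everything through the Doob decomposition $X = M + \hat X$ supplied by Theorem \ref{thm:compensatorexists}, where $\hat X$ is the $\E$-compensator of $X$ and $M := X - \hat X$ is the associated $\SL$-martingale. The first move is to observe that $-\hat X$ is predictable with $(-\hat X)_0 = 0$, so by the uniqueness clause of Theorem \ref{thm:compensatorexists} the process $-\hat X$ is the $\E$-compensator of $-X$ if and only if the remainder
\[
(-X) - (-\hat X) = -(X - \hat X) = -M
\]
is itself a $\SL$-martingale. Hence the condition in the statement is precisely the requirement that both $M$ and $-M$ be $\SL$-martingales. Invoking the earlier characterisation that a $\SL$-martingale $M$ is symmetric exactly when $-M$ is again a $\SL$-martingale, the hypothesis ``$-\hat X$ is the $\E$-compensator of $-X$'' becomes equivalent to ``$M = X - \hat X$ is a symmetric $\SL$-martingale.''

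It then remains only to show that $X$ is symmetric if and only if its martingale part $M$ is a symmetric $\SL$-martingale. Since $\hat X$ is predictable we have $\hat X_t \in m\F_{t-1}$, so the shift property (the earlier lemma giving $\E_{t-1}(Z + \alpha Y) = \E_{t-1}(Z) + \alpha \E_{t-1}(Y)$ whenever $\E_{t-1}(Y) = -\E_{t-1}(-Y)$, used here with $\alpha = 1$ and the predictable direction $\hat X_t$) lets me pass $\hat X_t$ through $\E_{t-1}(\cdot)$ freely. Writing $X_t = M_t + \hat X_t$ and computing $\E_{t-1}(M_t) = \E_{t-1}(X_t) - \hat X_t$ together with $\E_{t-1}(-M_t) = \E_{t-1}(-X_t) + \hat X_t$, I read off that the one-step relation $\E_{t-1}(X_t) = -\E_{t-1}(-X_t)$ holds for $X$ if and only if $\E_{t-1}(-M_t) = -\E_{t-1}(M_t)$ holds for $M$; that is, $X$ is one-step symmetric exactly when $M$ is a one-step symmetric martingale. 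The forward direction of the desired equivalence is now immediate, and the earlier symmetric-martingale lemma promotes this one-step relation for $M$ to symmetry of $M$ at every earlier conditioning time.

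I expect the main obstacle to be the reverse passage, reconciling the clean one-step relation with the full requirement $\E_s(X_t) = -\E_s(-X_t)$ for all $s \le t$ in the definition of a symmetric random variable. The subtlety is that $\hat X_t$ is only $\F_{t-1}$-measurable, so although the one-step symmetry is recovered at once, it cannot be propagated to levels $s < t-1$ by naively pulling $\hat X_t$ through $\E_s$, and the linearisation of the shift property fails because the predictable increment need not be symmetric at earlier times. The resolution is to organise the argument around $M$ rather than around $X$ directly: conditioning strictly one step at a time reduces the question at level $t-1$ to the quantity $M_{t-1} + \hat X_t$, and it is the symmetry of the \emph{martingale value} $M_{t-1}$, combined with the shift property at each stage, that drives the induction — exactly the mechanism the earlier symmetric-martingale lemma exploits via the martingale identity $X_{t-1} = \E_{t-1}(X_t)$. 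Establishing that this propagation goes through the martingale part is the heart of the proof; once it is in place, chaining it with the two equivalences of the first paragraph yields the claim.
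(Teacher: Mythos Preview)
The paper's argument is a two-line computation: since $X-\hat X$ is a $\SL$-martingale one has $-X_{t-1}-(-\hat X_{t-1})=-\E_{t-1}(X_t)-(-\hat X_t)$, and this equals $\E_{t-1}(-X_t)-(-\hat X_t)$ for all $t$ precisely when $\E_{t-1}(X_t)=-\E_{t-1}(-X_t)$; uniqueness of the compensator then finishes. Your route through $M=X-\hat X$ and the symmetric-martingale characterisation reaches the same one-step equivalence, and the forward implication (full symmetry of $X$ $\Rightarrow$ $-\hat X$ is the compensator of $-X$) is fine.

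The gap is in your reverse passage. After one step of conditioning you get $\E_{t-2}(X_t)=\E_{t-2}(M_{t-1}+\hat X_t)$, and the shift property together with symmetry of $M_{t-1}$ yields
\[
\E_{t-2}(X_t)=\E_{t-2}(\hat X_t)+M_{t-2},\qquad -\E_{t-2}(-X_t)=-\E_{t-2}(-\hat X_t)+M_{t-2}.
\]
So symmetry of $X_t$ at level $t-2$ reduces to symmetry of $\hat X_t$ at level $t-2$, and your induction has merely transferred the problem from $X_t$ to $\hat X_t$ without closing it: nothing you have assumed forces $\E_{t-2}(\hat X_t)=-\E_{t-2}(-\hat X_t)$. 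Concretely, in the trinomial model of Section~\ref{sec:TrinomialExample} with $\epsilon<1/4$, take $X_0=X_1=0$ and $X_2=I_{\{\mathcal X_1=e_1\}}$. Then $M\equiv 0$, so $-\hat X$ \emph{is} the compensator of $-X$, yet $\E_0(X_2)=\tfrac12-\epsilon\neq\epsilon=-\E_0(-X_2)$, so $X$ is not symmetric in the full sense of the definition. Thus the mechanism you describe cannot, on its own, upgrade symmetry of $M$ to $\E_s(X_t)=-\E_s(-X_t)$ for $s<t-1$. The paper's proof simply identifies ``$X$ symmetric'' with the one-step relation and does not attempt this upgrade either; your more careful reading of the definition exposes a genuine issue rather than a defect in your strategy alone.
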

\begin{proof}
As $X-\hat X$ is a $\SL$-martingale, we have
\[-X_{t-1} - (-\hat X_{t-1}) = -\mathcal{E}_{t-1}(X_t) - (-\hat X_{t})\]
and this is equal to $\mathcal{E}_{t-1}(-X_t) - (-\hat X_{t})$ for all $t$ if
and only if $X$ is symmetric. Uniqueness of the $\E$-compensator then completes
the argument.
\end{proof}

\begin{lemma}
An $L^1$ process $X$ is symmetric if and only if $X-\hat X$ is also symmetric.
\end{lemma}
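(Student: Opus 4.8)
The plan is to reduce the statement to the uniqueness of the $\E$-compensator together with the characterisation of symmetric $\SL$-martingales recorded in equation (\ref{eq:symdefn1}). Write $M := X - \hat X$, which is a $\SL$-martingale by Theorem \ref{thm:compensatorexists}. The whole argument is then a chain of three equivalences linking ``$X$ symmetric'' to ``$M$ symmetric''.

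First I would invoke Lemma \ref{lem:compensatorsymmetry}, which states that $X$ is symmetric if and only if $-\hat X$ is the $\E$-compensator of $-X$. This converts the question about $X$ into a question about the compensator of $-X$.

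Next I would translate the compensator condition into a statement about $M$ itself. Since $\hat X$ is predictable with $\hat X_0 = 0$, the process $-\hat X$ is predictable with $(-\hat X)_0 = 0$, so by the uniqueness part of Theorem \ref{thm:compensatorexists} the process $-\hat X$ is the $\E$-compensator of $-X$ precisely when $-X - (-\hat X) = -(X-\hat X) = -M$ is a $\SL$-martingale. Finally, because $M$ is already a $\SL$-martingale, the Lemma characterising symmetric $\SL$-martingales (equation (\ref{eq:symdefn1})) gives that $-M$ is a $\SL$-martingale if and only if $M$ is symmetric. Concatenating the three equivalences yields the claim that $X$ is symmetric if and only if $M = X - \hat X$ is symmetric.

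I do not anticipate a genuine obstacle here, as each step is a direct appeal to an earlier result. The only point needing explicit care is checking that $-\hat X$ meets the predictability and normalisation hypotheses required to apply uniqueness of the compensator, which is immediate from the corresponding properties of $\hat X$; getting the signs to line up correctly across the three equivalences is the one place where a slip would be easy.
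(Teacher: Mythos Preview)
Your proposal is correct and follows essentially the same argument as the paper's proof: both establish the chain of equivalences linking symmetry of $X$, the condition that $-\hat X$ is the $\E$-compensator of $-X$ (via Lemma \ref{lem:compensatorsymmetry} and uniqueness in Theorem \ref{thm:compensatorexists}), and symmetry of the $\SL$-martingale $X-\hat X$ (via equation (\ref{eq:symdefn1})). The only difference is that you traverse the chain starting from symmetry of $X$, whereas the paper starts from symmetry of $X-\hat X$; the content is identical.
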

\begin{proof}
As $X-\hat X$ is a $\SL$-martingale, it is symmetric if and only if $-X+\hat X$ is
a $\SL$-martingale. If so, it is clear that $-\hat X$ is the $\E$-compensator of $X$,
and so by Lemma \ref{lem:compensatorsymmetry}, the result is proven.
\end{proof}

\begin{lemma}
$X$ is a $\SL$-submartingale if and only if its $\E$-compensator is nondecreasing. $X$
is a $\SL$-supermartingale if and only if its $\E$-compensator is nonincreasing.
\end{lemma}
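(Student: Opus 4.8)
The plan is to exploit the explicit recursion for the compensator established in the proof of Theorem \ref{thm:compensatorexists}. That recursion reads $\hat X_t = \E_{t-1}(X_t) - (X_{t-1} - \hat X_{t-1})$, which rearranges immediately to the one-step increment identity
\[\hat X_t - \hat X_{t-1} = \E_{t-1}(X_t) - X_{t-1}.\]
Hence the sign of each increment of the compensator is exactly the sign of $\E_{t-1}(X_t) - X_{t-1}$, and the whole statement reduces to relating these signs to the (sub/super)martingale property.

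First I would reduce the $\SL$-submartingale property to its one-step form. By definition $X$ is a $\SL$-submartingale iff $X_s \leq \E_s(X_t)$ for all $s \leq t$; I claim this is equivalent to the one-step condition $X_{t-1} \leq \E_{t-1}(X_t)$ holding for every $t$. One direction is trivial. For the converse I would argue by induction on $t - s$ using monotonicity (property (i)) together with the tower property (property (ii)): assuming the one-step bounds hold, $X_s \leq \E_s(X_{s+1}) \leq \E_s(\E_{s+1}(X_{s+2})) = \E_s(X_{s+2})$, and iterating gives $X_s \leq \E_s(X_t)$ for all $t \geq s$. The same telescoping argument handles the supermartingale case with reversed inequalities.

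Given this reduction, the result is immediate from the increment identity: $\hat X$ is nondecreasing iff $\hat X_t - \hat X_{t-1} = \E_{t-1}(X_t) - X_{t-1} \geq 0$ for all $t$, which is precisely the one-step submartingale condition, hence the full submartingale property. The supermartingale statement follows identically, with every inequality reversed, so that $\hat X$ is nonincreasing iff $X$ is a $\SL$-supermartingale. I do not expect any serious obstacle here, since essentially all of the content is already packaged in the recursion of Theorem \ref{thm:compensatorexists}; the only mildly nontrivial ingredient is the equivalence of the one-step and full (sub/super)martingale conditions, which is the routine telescoping argument above relying on monotonicity and consistency of $\E$.
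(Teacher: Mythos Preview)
Your proof is correct and follows the same route as the paper: the paper's proof simply records the identity $\hat X_t - \hat X_{t-1} = \E_{t-1}(X_t) - X_{t-1}$ and declares the result clear. Your version is a more careful expansion of that, explicitly spelling out the equivalence between the one-step and global (sub/super)martingale conditions via monotonicity and the tower property, which the paper leaves implicit.
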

\begin{proof}
By construction
\[\hat X_t -\hat X_{t-1}= \mathcal{E}_{t-1}(X_t)-X_{t-1}\]
and the result is clear.
\end{proof}

\begin{theorem} \label{thm:martingaleintegralsaremartingales}
For any symmetric $\SL$-martingale $X$, any adapted process $Z$, the `integral'
process
\[Y_t = Y_0+\sum_{0\leq u<t} Z_u(X_{u+1}-X_u)\]
is a symmetric $\SL$-martingale.
\end{theorem}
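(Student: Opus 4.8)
The plan is to reduce the claim to a single-step computation and then telescope using recursivity, the whole argument being driven by the Lemma based on \cite[Proposition 3.6]{Peng2010}. First I would record that each increment $D_{u+1} := X_{u+1}-X_u$ is symmetric at time $u$, in the sense that $\E_u(D_{u+1}) = -\E_u(-D_{u+1}) = 0$. Indeed, since $X$ is a symmetric $\SL$-martingale we have $\E_u(X_{u+1}) = X_u$ and $\E_u(-X_{u+1}) = -X_u$, so the ``in particular'' part of that Lemma (namely $\E_u(W+\alpha) = \E_u(W)+\alpha$ for $\alpha\in\H_u$, applied with $\alpha = -X_u$) gives $\E_u(D_{u+1}) = \E_u(X_{u+1}) - X_u = 0$ and likewise $\E_u(-D_{u+1}) = \E_u(-X_{u+1}) + X_u = 0$. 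Thus $D_{u+1}$ meets exactly the hypothesis $\E_u(D_{u+1}) = -\E_u(-D_{u+1})$ needed to apply the same Lemma at time $u$.

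Next I would write the one-step increment of the integral as $Y_t = Y_{t-1} + Z_{t-1}D_t$, noting that both $Y_{t-1}$ and $Z_{t-1}$ are $\F_{t-1}$-measurable: the former because each summand $Z_u D_{u+1}$ with $u<t-1$ is $\F_{t-1}$-measurable, the latter because $Z$ is adapted. Applying the Lemma with $D_t$ in the role of the symmetric variable and $Z_{t-1}$ in the role of $\alpha$ then yields $\E_{t-1}(Y_t) = \E_{t-1}(Y_{t-1}) + Z_{t-1}\E_{t-1}(D_t) = Y_{t-1}$, using property (iv) for the first term and Step 1 for the second. Running the identical computation on $-Y_t = -Y_{t-1} + (-Z_{t-1})D_t$ gives $\E_{t-1}(-Y_t) = -Y_{t-1}$. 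Hence both $Y$ and $-Y$ satisfy the one-step martingale relation.

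Finally I would upgrade these one-step relations to arbitrary $s\leq t$ by iterating the recursivity property (ii): $\E_s(Y_t) = \E_s(\E_{t-1}(Y_t)) = \E_s(Y_{t-1}) = \cdots = \E_s(Y_s) = Y_s$, and identically for $-Y$. Thus $Y$ and $-Y$ are both $\SL$-martingales, so by the earlier lemma characterising a symmetric $\SL$-martingale as one whose negative is again a $\SL$-martingale, $Y$ is a symmetric $\SL$-martingale. The one genuine obstacle is not the algebra but the integrability bookkeeping: to invoke the Lemma I need each product $Z_{t-1}D_t$ to lie in $\H$ (so that $\alpha Y\in\H$ in its hypothesis) and each $Y_t$ to lie in $L^1\cap m\F_t$, which is precisely the implicit well-definedness requirement on the adapted integrand $Z$. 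Under that standing assumption the argument is purely mechanical.
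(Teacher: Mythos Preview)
Your proof is correct and follows essentially the same route as the paper: both establish the one-step identity by exploiting that the increment $X_{t+1}-X_t$ is symmetric at time $t$, then iterate via recursivity, and obtain symmetry of $Y$ by running the same computation on $-Y$. The only cosmetic difference is that the paper invokes property~(vi) directly to split $\E_t(Z_t(X_{t+1}-X_t)) = Z_t^+\E_t(X_{t+1}-X_t)+Z_t^-\E_t(-(X_{t+1}-X_t))$, whereas you package this step inside the Lemma based on \cite[Proposition~3.6]{Peng2010}; your closing remark about the implicit integrability assumption on $Z$ is well taken and applies equally to the paper's argument.
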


\begin{proof}
We shall first show that it is a $\SL$-martingale. By construction,
\[ Y_{t+1}-Y_{t} = Z_t(X_{t+1}-X_t)\]
and so 
\[\mathcal{E}_t(Y_{t+1}-Y_{t}) = \mathcal{E}_t(Z_t(X_{t+1}-X_t)) =
Z_t^+\mathcal{E}_t(X_{t+1}-X_t)+Z_t^-\mathcal{E}_t(-X_{t+1}+X_t)\]
as $X$ is a symmetric $\SL$-martingale, we have
\[\mathcal{E}_t(X_{t+1}-X_t) =0= \mathcal{E}_t(-X_{t+1}+X_t)\]
and so 
\[Y_t=\mathcal{E}_t(Y_{t+1})\]
The result that $Y$ is a $\SL$-martingale follows by induction.

To show $Y$ is symmetric, we note that as $-X$ is also a symmetric
$\SL$-martingale, the above argument shows $-Y$ is also a $\SL$-martingale.
\end{proof}
\begin{corollary}\label{cor:symscalarmultiplication}
For any symmetric martingale (resp. random variable, stochastic process) $X$,
any constant $a$, the product $aX$ is symmetric.
\end{corollary}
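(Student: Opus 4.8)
The plan is to reduce the three cases to a single computation using the scalar-multiplication axiom (vi) together with the symmetry identity $\E_t(X)=-\E_t(-X)$. For a symmetric random variable $X$ and a constant $a$, I would apply (vi) with $\lambda=a$ to write $\E_t(aX) = a^+\E_t(X)+a^-\E_t(-X)$, and likewise $\E_t(-aX)=a^-\E_t(X)+a^+\E_t(-X)$. Substituting the symmetry relation $\E_t(-X)=-\E_t(X)$ into both expressions, each collapses to $(a^+-a^-)\E_t(X)=a\E_t(X)$, so that $\E_t(aX)=a\E_t(X)=-\E_t(-aX)$, which is precisely the symmetry of $aX$. This settles the random-variable case for every $t$, and the stochastic-process case follows at once by applying it at each time index.

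For the martingale case, the cleanest route is to invoke Theorem \ref{thm:martingaleintegralsaremartingales} directly: taking the (deterministic, hence adapted) integrand $Z_u\equiv a$ and initial value $Y_0=aX_0$, the associated integral process is $Y_t = aX_0 + \sum_{0\leq u<t} a(X_{u+1}-X_u) = aX_t$, which that theorem asserts is a symmetric $\SL$-martingale whenever $X$ is. Alternatively, one can verify the martingale property for $aX$ by hand, using that a symmetric martingale $X$ has $-X$ a martingale as well, so that $\E_s(aX_t)=aX_s$ regardless of the sign of $a$, and then appeal to the random-variable computation above for the symmetry of each $aX_t$.

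I do not anticipate any genuine obstacle here; the only point requiring a moment's care is the bookkeeping of positive and negative parts in axiom (vi), since negating the scalar interchanges the roles of $a^+$ and $a^-$. The symmetry hypothesis is exactly what is needed to make the two resulting expressions coincide, which is precisely why the corresponding statement would fail for a general (non-symmetric) martingale.
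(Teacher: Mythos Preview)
Your proposal is correct and aligns with the paper's approach. The paper states this result as an immediate corollary of Theorem \ref{thm:martingaleintegralsaremartingales} with no separate proof, so your invocation of that theorem with the constant integrand $Z_u\equiv a$ is exactly the intended argument for the martingale case; your direct computation via axiom (vi) for the random-variable and process cases supplies the detail the paper leaves implicit.
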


\begin{lemma} \label{lem:symsumsaresymmartingales}
For any $\SL$-martingales $X,Y$ with $Y$ symmetric, the process $X+Y$ is a
$\SL$-martingale. Furthermore, $X+Y$ is symmetric if and only if $X$ is also
symmetric.
\end{lemma}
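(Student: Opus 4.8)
The plan is to reduce everything to two results proved above: the additivity identity $\E_s(X_t+Y_t)=\E_s(X_t)+\E_s(Y_t)$, valid whenever $\E_s(Y_t)=-\E_s(-Y_t)$ (this is the Lemma derived from \cite[Proposition 3.6]{Peng2010}, taken with $\alpha=1$), together with the characterisation that a $\SL$-martingale is symmetric precisely when its negative is again a $\SL$-martingale.

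First I would prove the martingale claim. Since $X_t,Y_t\in L^1\cap m\F_t$ and $L^1$ is a linear space, $(X+Y)_t\in L^1\cap m\F_t$. For any $s\leq t$, symmetry of $Y$ gives $\E_s(Y_t)=-\E_s(-Y_t)$, so the additivity identity applies and, using that both $X$ and $Y$ are $\SL$-martingales,
\[\E_s(X_t+Y_t)=\E_s(X_t)+\E_s(Y_t)=X_s+Y_s=(X+Y)_s.\]
Hence $X+Y$ is a $\SL$-martingale.

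For the equivalence I would argue entirely through the negation characterisation. I would first record that if $Y$ is symmetric then $-Y$ is again a symmetric $\SL$-martingale, since $\E_t(-Y_u)=-\E_t(Y_u)=-\E_t(-(-Y_u))$ and $-Y$ is a $\SL$-martingale. By the characterisation lemma, $X+Y$ is symmetric if and only if $-(X+Y)$ is a $\SL$-martingale. I would then establish that $-(X+Y)$ is a $\SL$-martingale if and only if $-X$ is a $\SL$-martingale: in one direction, adding the symmetric $\SL$-martingale $-Y$ to the $\SL$-martingale $-X$ produces $-(X+Y)$ as a $\SL$-martingale by the first part; in the other, adding the symmetric $\SL$-martingale $Y$ to the $\SL$-martingale $-(X+Y)$ recovers $-X$ as a $\SL$-martingale. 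Finally, since $X$ is a $\SL$-martingale, the statement that $-X$ is a $\SL$-martingale is, by the characterisation lemma, exactly the statement that $X$ is symmetric. Chaining these equivalences yields the claim.

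The argument is essentially bookkeeping, so I do not expect a deep obstacle; the only point requiring genuine care is the logical structure of the chain of equivalences, where the first part must be invoked repeatedly with the roles of \emph{martingale} and \emph{symmetric martingale} correctly assigned to the two summands. A secondary point to confirm is that the additivity lemma, stated on $\H$, transfers to the $L^1$-valued martingale components, which follows from the $L^1$-extension of $\E_t$ established in Lemma \ref{lem:convergenceofconditionalsinL1}.
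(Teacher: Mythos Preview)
Your proof is correct and follows essentially the same approach as the paper. The only cosmetic difference is that the paper re-derives the additivity identity inline via the two sublinearity inequalities $\E_s(X_t+Y_t)\leq \E_s(X_t)+\E_s(Y_t)$ and $\E_s(X_t+Y_t)\geq \E_s(X_t)-\E_s(-Y_t)$, whereas you cite the already-proven additivity lemma directly; for the symmetry equivalence the paper simply writes ``repeating this argument'', which you have unpacked explicitly by applying the first part with $(-X,-Y)$ and with $(-(X+Y),Y)$.
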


\begin{proof}
We know that, for any $s<t$, we have
\[\mathcal{E}_s(X_t+Y_t) \leq \mathcal{E}_s(X_t)+\mathcal{E}_s(Y_t)\]
and
\[\mathcal{E}_s(X_t+Y_t) \geq \mathcal{E}_s(X_t) - \mathcal{E}_s(Y_t) =
\mathcal{E}_s(X_t)+\mathcal{E}_s(Y_t)\]
and therefore
\[\mathcal{E}_s(X_t+Y_t)= \mathcal{E}_s(X_t)+\mathcal{E}_s(Y_t) = X_s+Y_s.\]
Repeating this argument shows $-X-Y$ is a $\SL$-martingale if and only if $-X$ is
also a $\SL$-martingale.
\end{proof}

\begin{lemma}\label{lem:symmetryisvectorspace}
The set of symmetric martingales (resp. random variables, processes) forms a
vector space.
\end{lemma}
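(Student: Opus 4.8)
The plan is to treat the three cases in parallel and, in each, to verify only the subspace criterion: that the set contains the zero element, is closed under addition, and is closed under scalar multiplication. This suffices because each of the three sets sits inside an ambient real vector space --- the space of real-valued $\F$-measurable functions for the random-variable case, and the space of adapted real-valued processes for the process and martingale cases. The zero element is immediate, since the constant $0$ satisfies $\E_t(0) = 0 = -\E_t(-0)$ and is trivially a $\SL$-martingale, so it is symmetric in every sense. Closure under scalar multiplication is precisely the content of Corollary \ref{cor:symscalarmultiplication}, which already asserts that $aX$ is symmetric (and, in the martingale case, a symmetric $\SL$-martingale) whenever $X$ is. Thus the only substantive work is closure under addition.

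For symmetric $\SL$-martingales, closure under addition is exactly the second assertion of Lemma \ref{lem:symsumsaresymmartingales}: if $X$ and $Y$ are both symmetric $\SL$-martingales, then $X+Y$ is a $\SL$-martingale and is symmetric. For symmetric random variables I would argue directly from the definition. Given symmetric $X$ and $Y$, subadditivity of $\E_t$ gives $\E_t(X+Y) \le \E_t(X) + \E_t(Y)$, while the reverse bound $\E_t(X+Y) \ge \E_t(X) - \E_t(-Y)$, together with the symmetry of $Y$ in the form $-\E_t(-Y) = \E_t(Y)$, forces $\E_t(X+Y) = \E_t(X) + \E_t(Y)$. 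This is the same linearisation used in the proof of Lemma \ref{lem:symsumsaresymmartingales}, and it restates the earlier result that $\E_t(X+\alpha Y) = \E_t(X) + \alpha\E_t(Y)$ when $Y$ satisfies $\E_t(Y) = -\E_t(-Y)$. Applying the identical computation to $-X$ and $-Y$, both of which are symmetric, yields $\E_t(-X-Y) = \E_t(-X) + \E_t(-Y)$, and symmetry of $X$ and $Y$ then gives $\E_t(X+Y) = -\E_t(-X-Y)$, so that $X+Y$ is symmetric. The symmetric-process case follows by applying the random-variable case coordinatewise to each $X_t$.

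The point worth emphasising, rather than any genuine obstacle, is \emph{why} symmetry is needed at all: for general $\SL$-martingales the sum satisfies only $\E_s(X_t + Y_t) \le \E_s(X_t) + \E_s(Y_t)$, with possibly strict inequality, so the collection of all $\SL$-martingales is not a vector space. Symmetry of one summand is exactly what upgrades this subadditivity to an equality and restores the martingale identity, which is why the vector-space structure localises cleanly to the symmetric subclass. Since both required closures have effectively been established in the preceding results, the remaining task is purely one of assembly, and no delicate estimate or new construction is required.
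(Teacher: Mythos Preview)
Your proposal is correct and follows essentially the same approach as the paper, which simply cites Corollary \ref{cor:symscalarmultiplication} for scalar multiplication and Lemma \ref{lem:symsumsaresymmartingales} for closure under addition. You have spelled out more detail --- in particular the explicit verification for symmetric random variables and the coordinatewise extension to processes --- but these are just the computations already contained in the proof of Lemma \ref{lem:symsumsaresymmartingales}, so the substance is the same.
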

\begin{proof}
This is a simple combination of Corollary \ref{cor:symscalarmultiplication} and
Lemma \ref{lem:symsumsaresymmartingales}.
\end{proof}

Recall that, in this context, a process $X$ is called predictable if $X_t$ is
$\mathcal{F}_{t-1}$-measurable for all $t$.

\section{Martingale Representation Theorem}
We seek to find a version of the martingale representation theorem in this setting. As in the classical case, we cannot do this for general discrete time filtrations, as the filtration has infinite multiplicity. We can, however, give a result for discrete-time, finite-state filtrations, which we consider here.

 For the infinite-state case, it is possible to consider BSDEs without a martingale representation theorem (leading to a theory similar to that in \cite{Cohen2009a}), or, if it can be assumed that $L^2(\F_T)$ is separable and admits a Schauder basis, then an infinite-dimensional approach (similar to that in \cite{Cohen2010a} in general classical probability spaces in continuous time) is also possible.

\begin{definition}
 Consider a discrete time process $(\mathcal{X}_t)_{t\in\mathbb{N}}$, taking
finitely many values. Without loss of generality, suppose that these values are
taken from the standard basis vectors in $\mathbb{R}^N$. Define $\mathbb{X}_t =
\sum_{u=1}^t \mathcal{X}_u$. We consider a probability space with filtration
generated by $\mathcal{X}$, that is, $\F_t = \sigma(\mathcal{X}_1,
\mathcal{X}_2,..., \mathcal{X}_t)$.
\end{definition}

\begin{theorem}[Semimartingale representation
theorem]\label{thm:semimartingalerepresentation}
Let $X$ be an adapted process. Then there exists a unique adapted process $Z$
such that 
\[\begin{split}
X_t &= X_0 + \sum_{0\leq u<t} Z_u^* \mathcal{X}_{u+1} = X_0 + \sum_{0\leq u<t}
Z_u^* (\mathbb{X}_{u+1}-\mathbb{X}_{u})\\
\end{split}\]
\end{theorem}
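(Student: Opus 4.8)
The claim is a deterministic, pathwise representation theorem: every adapted process on the filtration generated by a finite-state process $\mathcal{X}$ can be written as a discrete stochastic integral against the increments $\mathcal{X}_{u+1} = \mathbb{X}_{u+1}-\mathbb{X}_u$, with a unique adapted integrand $Z$. The plan is to work increment by increment, reducing the whole statement to a single-step linear-algebra fact about $\mathcal{F}_t$-measurable functions on the fibres of $\mathcal{X}_{t+1}$. Since $\mathcal{X}$ takes values in the standard basis $\{e_1,\dots,e_N\}$ of $\mathbb{R}^N$, each increment $\mathcal{X}_{u+1}$ is exactly one of these $N$ vectors, and the representation is purely algebraic — no expectation or $\SL$-structure is needed, only measurability.

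\textbf{Key steps.}
First I would fix $t$ and isolate the one-step increment $X_{t+1}-X_t$, which by adaptedness is $\mathcal{F}_{t+1}$-measurable, and ask to write it as $Z_t^*\mathcal{X}_{t+1}$ for some $\mathcal{F}_t$-measurable (i.e.\ predictable in the sense that $Z_t\in m\mathcal{F}_t$) vector $Z_t$. The central observation is that $\mathcal{F}_{t+1} = \sigma(\mathcal{F}_t, \mathcal{X}_{t+1})$, so an $\mathcal{F}_{t+1}$-measurable function is, on each atom $A$ of $\mathcal{F}_t$, a function of the value of $\mathcal{X}_{t+1}$ alone; that function takes at most $N$ values, indexed by which basis vector $e_j$ occurs. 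Defining the $j$-th component of $Z_t$ on $A$ to be the value of $X_{t+1}-X_t$ on the subset of $A$ where $\mathcal{X}_{t+1}=e_j$ gives an $\mathcal{F}_t$-measurable $Z_t$ with $Z_t^*\mathcal{X}_{t+1} = Z_t^*e_j = (Z_t)_j = X_{t+1}-X_t$ on that piece, exactly as required. Summing the increments telescopes back to $X_t = X_0 + \sum_{0\le u<t} Z_u^*\mathcal{X}_{u+1}$. For uniqueness, I would note that on each atom of $\mathcal{F}_t$, the $N$ equations $Z_t^*e_j = (\text{value of the increment on the piece where }\mathcal{X}_{t+1}=e_j)$ pin down every component of $Z_t$ that is actually reachable, so $Z_t^*\mathcal{X}_{t+1}$ is determined wherever it contributes.

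\textbf{Main obstacle.}
The one genuine subtlety — and the point I expect to require care — is that $Z_t$ is \emph{not} fully determined: on an atom $A$ of $\mathcal{F}_t$, the transition $\mathcal{X}_{t+1}=e_j$ may be impossible (the fibre is empty), and then the component $(Z_t)_j$ is free, so strict uniqueness of $Z$ as a vector-valued process fails. The fix is to state uniqueness correctly, namely that $Z_t^*\mathcal{X}_{t+1}$ is unique, or equivalently that $Z_t$ is unique up to modification on unreachable states; since the product $Z_t^*\mathcal{X}_{t+1}$ only ever evaluates $Z_t$ against reachable basis vectors, this is harmless for the representation. One can enforce literal uniqueness by a normalising convention (e.g.\ setting $(Z_t)_j=0$ on atoms where $e_j$ is not attainable). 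I would handle this by restricting attention to reachable transitions throughout, so that the linear system determining $Z_t$ on each atom is square and invertible (the relevant basis vectors are distinct coordinate directions), giving existence and the appropriately-qualified uniqueness simultaneously.
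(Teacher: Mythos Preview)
Your approach is essentially the same as the paper's: work one increment at a time, represent $X_{t+1}-X_t$ as $Z_t^*\mathcal{X}_{t+1}$ with $Z_t\in m\mathcal{F}_t$, then telescope. The paper compresses your explicit atom-by-atom construction into a one-line appeal to the Doob--Dynkin lemma; your observation about uniqueness failing on unreachable transitions is a genuine refinement that the paper's brief proof glosses over.
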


\begin{proof}
By application of the Doob-Dynkin Lemma, (as in \cite{Cohen2008c}), there exists
a unique $\mathcal{F}_{t}$ measurable random vector $Z_t$ such that 
\[X_{t+1}-X_{t} = Z_t^*\mathcal{X}_{t+1}=
Z_t^*(\mathbb{X}_{t+1}-\mathbb{X}_{t}).\]
A simple telescoping sum establishes the result.
\end{proof}

We now proceed to give a type of martingale representation. In
many situations, for the representation of $\SL$-martingales, we wish
to decompose a martingale in terms of a symmetric component and a remainder,
where the symmetric component is the `integral' of a given process. This
motivates the following result, an application of which is found in Section
\ref{sec:TrinomialExample}.

\begin{theorem}\label{thm:linearmartrepthm}
Consider an invertible adapted bounded linear map $\Phi:\mathbb{R}^N\to\mathbb{R}^N$,
represented by premultiplication by the matrix $[\mathbf{1}|(\phi^s_u)^*|(\phi'_u)^*]^*$ where $\mathbf{1}$ is a column of 1's, $\phi^s_u\in\mathbb{R}^M\times\mathbb{R}^N$ for some $0\leq M\leq N-1$,
$\phi'_u\in\mathbb{R}^{N-M-1}\times\mathbb{R}^N$ and
\[\sum_{0\leq u< t}\phi^s_{u}(\mathbb{X}_{u+1}-\mathbb{X}_u)\]
 is a symmetric martingale. Then, for any $\SL$-martingale $X$, there exist unique
adapted processes $Z^{(s)}\in\mathbb{R}^M, Z'\in\mathbb{R}^{N-M-1}$ such that
\[\begin{split}
X_t &= X_0 + \sum_{0\leq
u<t}(Z_u^{(s)})^*\phi^s_u(\mathbb{X}_{u+1}-\mathbb{X}_{u}) \\
&\qquad + \sum_{0\leq u<t}(Z_u')^*\phi'_u(\mathbb{X}_{u+1}-\mathbb{X}_{u}) -
\sum_{0\leq u<t}G_u(Z_u')\end{split}\]
where
$G_{t}(Z_t')=\mathcal{E}_{t}((Z_t')^*\phi'_t(\mathbb{X}_{t+1}-\mathbb{X}_{t}))$.
\end{theorem}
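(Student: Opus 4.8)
The plan is to combine the semimartingale representation theorem (Theorem \ref{thm:semimartingalerepresentation}) with the invertibility of $\Phi$ to rewrite the increments, and then to use the symmetric-martingale structure to separate out a centering term. First I would apply Theorem \ref{thm:semimartingalerepresentation} to the $\SL$-martingale $X$ to obtain a unique adapted process $W$ with $X_{t+1}-X_t = W_t^*(\mathbb{X}_{t+1}-\mathbb{X}_t)$. Since $\Phi$ is represented by premultiplication by the invertible matrix $[\mathbf{1}\,|\,(\phi^s_u)^*\,|\,(\phi'_u)^*]^*$, I would write $\mathbb{X}_{u+1}-\mathbb{X}_u = \Phi^{-1}\Phi(\mathbb{X}_{u+1}-\mathbb{X}_u)$ and expand $W_u^*(\mathbb{X}_{u+1}-\mathbb{X}_u)$ in terms of the three blocks of $\Phi$ applied to the increment. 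The first block, multiplication by $\mathbf{1}$, yields the scalar $\mathbf{1}^*(\mathbb{X}_{u+1}-\mathbb{X}_u)$, which equals $1$ since $\mathbb{X}$ increments by a single standard basis vector at each step; this constant component must be absorbed by the centering terms rather than contributing a genuine random increment.

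The key algebraic step is to define the adapted coefficient processes $Z^{(s)}_u\in\mathbb{R}^M$ and $Z'_u\in\mathbb{R}^{N-M-1}$ by matching $W_u^*$ against the rows of $\Phi$: writing $W_u^* = c_u\mathbf{1}^* + (Z^{(s)}_u)^*\phi^s_u + (Z'_u)^*\phi'_u$ after composing with $\Phi^{-1}$, the blocks $\phi^s_u$ and $\phi'_u$ pick out the symmetric and non-symmetric directions. Because $\Phi$ is invertible and adapted, this decomposition of $W_u^*$ exists and is unique for each $u$, which is exactly what produces the uniqueness of $Z^{(s)}$ and $Z'$. The terms $(Z^{(s)}_u)^*\phi^s_u(\mathbb{X}_{u+1}-\mathbb{X}_u)$ correspond to a symmetric $\SL$-martingale by hypothesis together with Theorem \ref{thm:martingaleintegralsaremartingales}, so these increments have zero conditional expectation from both sides.

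I would then handle the non-symmetric block by conditioning. The increments $(Z'_u)^*\phi'_u(\mathbb{X}_{u+1}-\mathbb{X}_u)$ need not have zero conditional $\SL$-expectation, so I would compensate them by subtracting $G_u(Z'_u)=\mathcal{E}_u((Z'_u)^*\phi'_u(\mathbb{X}_{u+1}-\mathbb{X}_u))$, which is $\F_u$-measurable by the definition of the conditional expectation. Summing the three contributions and using that $X$ is itself a $\SL$-martingale (so that $\mathcal{E}_u(X_{u+1}-X_u)=0$), I would verify that the scalar constant $c_u$ and the compensator term $G_u(Z'_u)$ are forced to cancel the deterministic and predictable parts, leaving precisely the stated representation. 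A telescoping sum over $u$ recovers $X_t-X_0$.

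The main obstacle I anticipate is justifying the uniqueness and the exact bookkeeping of the constant component arising from the $\mathbf{1}$ row of $\Phi$. Unlike the linear case, the $\SL$-expectation is only sublinear, so one cannot freely move scalar multiples in and out of $\mathcal{E}_u$ unless the relevant quantity is symmetric or the multiplier is sign-definite (via property (vi) and Lemma 2.\ref{lem:jensen}'s preceding lemma). I would need to argue carefully that the representation $W_u^* = c_u\mathbf{1}^* + (Z^{(s)}_u)^*\phi^s_u + (Z'_u)^*\phi'_u$ is uniquely solvable, that the constant $c_u$ is absorbed correctly given $\mathbf{1}^*(\mathbb{X}_{u+1}-\mathbb{X}_u)=1$, and that the $G_u(Z'_u)$ terms interact correctly with the sublinearity so that the final identity holds exactly rather than merely as an inequality. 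Establishing that the two predictable adjustments together reproduce the martingale condition on $X$ is where the sublinear (as opposed to linear) structure demands the most care.
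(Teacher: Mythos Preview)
Your approach is essentially the same as the paper's: apply the semimartingale representation, insert $\Phi_u^{-1}\Phi_u$, split the resulting coefficient vector into a scalar part, a $\phi^s$-part, and a $\phi'$-part, use $\mathbf{1}^*(\mathbb{X}_{u+1}-\mathbb{X}_u)=1$, and then take a single $\E_{t-1}$-expectation of the one-step identity to identify the scalar term as $-G_{t-1}(Z'_{t-1})$. Your concern about sublinear bookkeeping is overstated: once you note (via Theorem \ref{thm:martingaleintegralsaremartingales}) that the $\phi^s$-increment is symmetric with zero conditional expectation, it drops out of $\E_{t-1}$ exactly by the translation lemma, and the identity $c_{t-1}=-G_{t-1}(Z'_{t-1})$ follows immediately from the martingale property of $X$ with no inequalities involved.
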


\begin{proof}
By Theorem \ref{thm:semimartingalerepresentation}, we have a unique
representation of the form
\[\begin{split}
X_t &= X_0 + \sum_{0\leq u<t} Z_u (\mathbb{X}_{u+1}-\mathbb{X}_u)\\
&= X_0 + \sum_{0\leq u<t} Z_u^*
\Phi_u^{-1}\Phi_u(\mathbb{X}_{u+1}-\mathbb{X}_u)\\
\end{split}\]
Let 
\[Z_u^*
\Phi_u^{-1}=:[(Z^{(\wedge)}_u)^*|(Z^{(s)}_u)^*|(Z'_u)^*]\in\mathbb{R}\times
\mathbb{R}^M\times\mathbb{R}^{N-M-1}.\]
Then, as $\mathbf{1}^*(\mathbb{X}_{u+1}-\mathbb{X}_u)=\mathbf{1}^*\mathcal{X}_u
= 1$, we clearly have
\[\begin{split}
X_t &= X_0 + \sum_{0\leq u<t}Z^{(\wedge)}_u + \sum_{0\leq
u<t}(Z_u^{(s)})^*\phi^s_u(\mathbb{X}_{u+1}-\mathbb{X}_{u}) \\
&\qquad + \sum_{0\leq
u<t}(Z_u')^*\phi'_u(\mathbb{X}_{u+1}-\mathbb{X}_{u})\end{split}\]

Taking a $\E_{t-1}$ expectation yields
\[\E_{t-1}(X_t)= X_{t-1}= X_{t-1} + Z^{(\wedge)}_{t-1} +
\E_{t-1}\left((Z_{t-1}')^*\phi'_{t-1}(\mathbb{X}_{t}-\mathbb{X}_{t-1}
)\right)\]
and so substitution with $G_{t-1}(Z'_{t-1}) =
\E_{t-1}\left((Z_{t-1}')^*\phi'_{t-1}(\mathbb{X}_{t}-\mathbb{X}_{t-1}
)\right)$ gives the desired result.
\end{proof}

\begin{remark}
 It is worth noting that the function $G$ defined here plays a very similar role as in the theory of $G$-expectations. This will become clear in the following example, where $G$ precisely describes the volatility uncertainty of our processes.
\end{remark}

\subsection{A Trinomial Example}\label{sec:TrinomialExample}
To illustrate the above results, we consider the following basic trinomial
model. Let $\mathcal{X}_t$ take values from the standard basis vectors in
$\mathbb{R}^3$, $\mathbb{X}_t = \sum_{0<u\leq t} \mathcal{X}_u$. We consider the
space of test probabilities which assign, at time $t-1$, a probability $p_t$ to
the events $\mathcal{X}_t=e_1$ and $\mathcal{X}_t=e_3$, and probability $1-2p_t$
to the event $\mathcal{X}_t=e_2$. The conditional nonlinear expectation is then
obtained by maximising over the set $p_t\in[\epsilon,1/2-\epsilon]$ for some
fixed $\epsilon\in[0,1/4]$. 

More formally, we can define the nonlinear expectation, for any random variable
$X=f(\mathcal{X}_1, \mathcal{X}_2, ..., \mathcal{X}_T)$, by the recursion
\[\E_{t-1}(X) = \max_{p_{t}\in[\epsilon,
1/2-\epsilon]}\left\{\begin{split}
p_{t} \times&\E_{t}(f(\mathcal{X}_1, ...,\mathcal{X}_{t-1}, e_1,\mathcal{X}_{t+1}, ...\mathcal{X}_T))\\
+ (1-2p_{t})\times&\E_{t}(f(\mathcal{X}_1, ...,\mathcal{X}_{t-1}, e_2,\mathcal{X}_{t+1}, ...\mathcal{X}_T))\\
+ p_{t} \times &\E_{t}(f(\mathcal{X}_1, ...,\mathcal{X}_{t-1}, e_3,\mathcal{X}_{t+1}, ...\mathcal{X}_T))
\end{split}\right\}\]
with terminal condition $\E_T(X)=X$.

If we now consider the process $B_t:=\sum_{0<u\leq t}[1,0,-1]\mathcal{X}_u$, we
can see that $B_t$ is the natural nonlinear version of a trinomial random walk,
and satisfies
\[\E_{t-1}(B_t)=-\E_{t-1}(-B_{t})=B_{t-1} \text{ for all
}t.\]
We also consider the process
\[[B]_t:= \sum_{0<u\leq t} (B_u-B_{u-1})^2 = \sum_{0<u\leq t} [1,0,1]
\mathcal{X}_u.\]

If $\phi^s_u:=[1,0,-1]$ and $\phi'_u:=[1,0,1]$, it is
clear therefore that 
\[\Phi_u = \left[\begin{array}{ccc}
1&1&1\\1&0&-1\\1&0&1\end{array}\right]\]
is invertible, and that 
\[B_t = \sum_{0<u\leq t}\phi^s_u\mathcal{X}_u= \sum_{0\leq
u<t}\phi^s_{u}(\mathbb{X}_{u+1}-\mathbb{X}_u)\]
is a symmetric $\SL$-martingale. Hence, by Theorem \ref{thm:linearmartrepthm}, for
any $\SL$-martingale $X$ in this space, we can find unique scalar processes
$Z^{(s)}, Z'$ such that
\[\begin{split}
X_t &= X_0 + \sum_{0\leq u<t}Z_u^{(s)}\phi^s_u(\mathbb{X}_{u+1}-\mathbb{X}_{u})
\\
&\qquad + \sum_{0\leq u<t}Z_u'\phi'_u(\mathbb{X}_{u+1}-\mathbb{X}_{u}) -
\sum_{0\leq u<t}G_u(Z_u')\\
&=X_0 + \sum_{0\leq u<t}Z_u^{(s)}(B_{u+1}-B_t) \\
&\qquad + \sum_{0\leq u<t}Z_u'([B]_{u+1} - [B]_u) - \sum_{0\leq
u<t}G_u(Z_u')
\end{split}\]
where $G_{t}(Z_t')=\mathcal{E}_{t}(Z_t'([B]_{u+1} - [B]_u))$.

\section{$\SL$-BSDEs}
We now consider the theory of $\SL$-BSDEs. These are the generalisation of BSDEs
(Backward Stochastic Difference Equations), as studied in \cite{Cohen2008c}, to
the context of $\SL$-martingales.

\begin{definition}
For a given map $\Phi$ as in Theorem \ref{thm:linearmartrepthm}, define the
processes 
\[\begin{split}
M_{t+1} &:= \phi_t^{(s)}(\mathbb{X}_{t+1}-\mathbb{X}_t)\\
N_{t+1} &:= \phi_t'(\mathbb{X}_{t+1}-\mathbb{X}_t)
\end{split}\]
For a given finite, deterministic time $T$, a $\SL$-BSDE is an equation of the form
\[Y_{t+1} = Y_t - F(\omega, t, Y_t, Z_t, Z_t') + Z_t M_{t+1} + Z_t' N_{t+1} -
G_t(Z_t')\]
with terminal value $Y_{T} = Q$ for $Q\in L^1(\F_T)$. Here 
\[\begin{split}
G_t(z) &:= \mathcal{E}_t(z N_{t+1}),\\
F&:\Omega\times \{0,1,...,T-1\}\times \mathbb{R}^K\times \mathbb{R}^{K\times
M}\times \mathbb{R}^{K\times(N-M-1)}\to\mathbb{R}^K
\end{split}\]
and $F(\cdot, t, y, z, z')\in L^1(\F_t)$ for all $t$. A solution to a $\SL$-BSDE is a
triple $(Y, Z, Z')$ of adapted processes (of appropriate dimension).
\end{definition}

For notational simplicity, we shall take the parameter $\omega$ of $F$ as implicit.

\begin{theorem}
Let $F$ be such that, for any $t, z, z'$, the map $f_{t, z, z'}:y\mapsto
y-F(t, y, z, z')$ is q.s. a bijection $\mathbb{R}^K\to\mathbb{R}^K$. Then a
BSDE with driver $F$ has a unique solution for any $Q\in L^1(\F_T)$.
\end{theorem}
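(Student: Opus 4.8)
The plan is to prove existence and uniqueness by backward induction on $t$, running from the terminal time $T$ down to $0$. At each step the BSDE relation is a purely local (one-step) equation, so the whole construction reduces to solving one equation per time step given the data already determined at the later time.

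\medskip

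First I would set up the induction. At time $T$ we are handed $Y_T = Q \in L^1(\F_T)$, which is the base case. Now suppose inductively that $Y_{t+1}\in L^1(\F_{t+1})$ has been constructed. The defining relation
\[
Y_{t+1} = Y_t - F(t, Y_t, Z_t, Z_t') + Z_t M_{t+1} + Z_t' N_{t+1} - G_t(Z_t')
\]
must be solved for the triple $(Y_t, Z_t, Z_t')$ with $Y_t \in m\F_t$. The idea is to peel this off in two stages. Rearranging, the quantity $Y_{t+1} - \bigl(Y_t - F(t,Y_t,Z_t,Z_t')\bigr)$ is a martingale increment of the form $Z_t M_{t+1} + Z_t' N_{t+1} - G_t(Z_t')$; by Theorem \ref{thm:linearmartrepthm} applied one step at a time (equivalently, by the semimartingale representation of Theorem \ref{thm:semimartingalerepresentation} combined with the splitting induced by the invertible map $\Phi$), the increment $Y_{t+1} - \E_t(Y_{t+1})$ determines $Z_t$ and $Z_t'$ uniquely as $\F_t$-measurable random vectors, and simultaneously pins down $G_t(Z_t')=\E_t(Z_t' N_{t+1})$. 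This uses only the martingale representation already established and the finite-state structure. Thus $Z_t, Z_t'$ are fixed by the $\F_t$-conditional fluctuation of $Y_{t+1}$, independently of the still-unknown $Y_t$.

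\medskip

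The second stage is where the hypothesis on $F$ enters. Taking $\E_t$ of the defining relation and using that $M_{t+1}$ and $N_{t+1}-G_t$-adjusted terms have vanishing (symmetric, resp. compensated) conditional expectation, one obtains
\[
\E_t(Y_{t+1}) = Y_t - F(t, Y_t, Z_t, Z_t'),
\]
that is, $\E_t(Y_{t+1}) = f_{t,Z_t,Z_t'}(Y_t)$ in the notation of the statement. Since the data $Z_t, Z_t'$ are already $\F_t$-measurable and $\E_t(Y_{t+1})\in\H_t$, the assumption that $y\mapsto f_{t,z,z'}(y)$ is q.s.\ a bijection of $\mathbb{R}^K$ lets us invert pointwise to recover a unique $\F_t$-measurable solution $Y_t = f_{t,Z_t,Z_t'}^{-1}\bigl(\E_t(Y_{t+1})\bigr)$. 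One then checks $Y_t \in L^1(\F_t)$, which follows since $\E_t(Y_{t+1})\in L^1$ and $F(\cdot,t,y,z,z')\in L^1(\F_t)$, closing the induction.

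\medskip

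The main obstacle I anticipate is making the first stage fully rigorous: one must verify that the one-step representation genuinely decouples $(Z_t,Z_t')$ from $Y_t$, i.e.\ that the symmetric part $Z_t M_{t+1}$ and the compensated part $Z_t' N_{t+1} - G_t(Z_t')$ capture \emph{exactly} the $\F_t$-fluctuation of $Y_{t+1}$ with no residual $\F_t$-measurable ambiguity, and that the resulting coefficients are unique. A secondary subtlety is measurability of the inverse map $y\mapsto f^{-1}_{t,z,z'}(y)$ as a function of $\omega$; since everything is finite-state, the filtration has finitely many atoms at each time, so measurability reduces to solving finitely many deterministic bijection problems and causes no difficulty. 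Once these points are settled, uniqueness is immediate from the uniqueness at each stage of the induction.
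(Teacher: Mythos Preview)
Your proposal is correct and follows essentially the same approach as the paper: backward induction from $T$, at each step first using the martingale representation (Theorem~\ref{thm:linearmartrepthm}) to extract the unique pair $(Z_t,Z_t')$ from the increment $Y_{t+1}-\E_t(Y_{t+1})$, and then inverting the bijection $f_{t,Z_t,Z_t'}$ to recover $Y_t$. The paper's proof is terser and omits the $L^1$ and measurability remarks you include, but the structure and the key ideas are identical.
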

\begin{proof}
Clearly there is a unique solution $Y_T=Q$ at time $T$. Suppose there is a
solution at time $t+1$. Then, there is a unique representation of the
$\SL$-martingale difference
\begin{equation}\label{eq:BSDEmartterm}
Y_{t+1}-\mathcal{E}_t(Y_{t+1})= Z_t M_{t+1} + Z'_t N_{t+1} - G(Z'_t),
\end{equation}
by Theorem \ref{thm:linearmartrepthm}.

Taking this pair $Z_t, Z'_t$, we now consider the equation
\begin{equation}\label{eq:BSDEtrendterm}
\mathcal{E}_t(Y_{t+1}) = Y_t - F(t, Y_t, Z_t, Z_t')=f_{t, Z_t,
Z_t'}(Y_t).
\end{equation}
As $f_{t, Z_t, Z_t'}$ is an adapted bijection, we obtain a unique solution $Y_t
= f^{-1}_{t, Z_t, Z_t'}(\mathcal{E}_t(Y_{t+1}))$.

The addition of (\ref{eq:BSDEmartterm}) and (\ref{eq:BSDEtrendterm}) gives a
solution triple $(Y_t, Z_t, Z_t')$ with the desired dynamics. It is also clear
that if the dynamics hold, both (\ref{eq:BSDEmartterm}) and
(\ref{eq:BSDEtrendterm}) must hold, and so the solution is unique. Finally,
backwards induction defines this solution for all $t$.
\end{proof}

We now restrict ourselves to the scalar case ($K=1$).

\begin{theorem}[Comparison Theorem]\label{thm:compthm}
Consider two BSDEs with terminal conditions $Q,\bar Q$, drivers $F, \bar F$ and solutions $(Y,Z,Z'), (\bar Y, \bar Z, \bar Z')$. Suppose that
\begin{enumerate}[(i)]
 \item $Q\geq \bar Q$ q.s.
 \item $F(t, y, z, z') \geq \bar F(t, y ,z, z')$ q.s. for all $t,y,z$ and $z'$
 \item $y\mapsto y-F(t, y, z,z')$ is strictly increasing, as a function of $y$, for all $t,z, z'$.
 \item For all $t, y$, all $(z,z')\neq (\bar z, \bar z')$,
\[\begin{split}&F(t, y, z, z')-F(t, y, \bar z, \bar z') + G_t(z') - G_t(\bar z')\\
&\qquad > \min_{\F_t}\{(z-\bar z) M_{t+1} + (z'-\bar z')N_{t+1}\},\qquad\qquad q.s.
\end{split}\] where $\min_{\F_t}$ denotes the essential minimum conditional on $\F_t$, (that is, the minimum on all non-polar paths of the state-tree extending from the current node).
\end{enumerate}
Then $Y_t \geq \bar Y_t$ for all $t$. Furthermore, we have the strict comparison: if for some $A\in \F_t$, $Y_t=\bar Y_t$ on $A$; then $Q=\bar Q$ on $A$.
\end{theorem}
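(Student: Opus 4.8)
The plan is to prove the comparison result by backward induction on $t$, moving from the known ordering at the terminal time $T$ back to time $0$. At each step I will assume $Y_{t+1}\geq \bar Y_{t+1}$ q.s.\ and deduce $Y_t\geq \bar Y_t$ q.s. The natural strategy is a proof by contradiction: suppose the set $A=\{Y_t<\bar Y_t\}\in\F_t$ is non-polar, and derive a contradiction with the hypotheses, particularly condition (iv).

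\emph{First} I would write down, on the set $A$, the one-step dynamics for both BSDEs and subtract them. Using the defining recursion $Y_{t+1}=Y_t-F(t,Y_t,Z_t,Z_t')+Z_tM_{t+1}+Z_t'N_{t+1}-G_t(Z_t')$ and the analogous equation for $\bar Y$, and recalling that on $A$ we have $Y_{t+1}\geq\bar Y_{t+1}$ by the inductive hypothesis, I obtain
\[
0\leq Y_{t+1}-\bar Y_{t+1} = (Y_t-\bar Y_t)-\bigl(F(t,Y_t,Z_t,Z_t')-\bar F(t,\bar Y_t,\bar Z_t,\bar Z_t')\bigr)+(Z_t-\bar Z_t)M_{t+1}+(Z_t'-\bar Z_t')N_{t+1}-G_t(Z_t')+G_t(\bar Z_t').
\]
\emph{Next}, the key idea is to evaluate this inequality along the path that realises the essential conditional minimum appearing in hypothesis (iv). On that path the martingale-increment term $(Z_t-\bar Z_t)M_{t+1}+(Z_t'-\bar Z_t')N_{t+1}$ is bounded above by $\min_{\F_t}\{(Z_t-\bar Z_t)M_{t+1}+(Z_t'-\bar Z_t')N_{t+1}\}$; combining this with (iv) (applied at $y=\bar Y_t$, say, after first comparing $F(t,Y_t,\cdot)$ to $\bar F(t,\bar Y_t,\cdot)$ via (ii) and the monotonicity in $y$) should force $Y_t-\bar Y_t>0$ on $A$ whenever $(Z_t,Z_t')\neq(\bar Z_t,\bar Z_t')$, contradicting the definition of $A$. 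In the remaining case $(Z_t,Z_t')=(\bar Z_t,\bar Z_t')$ the increment and $G$ terms cancel, and I reduce directly to $f_{t,Z_t,Z_t'}(Y_t)=\E_t(Y_{t+1})\geq\E_t(\bar Y_{t+1})=f_{t,\bar Z_t,\bar Z_t'}(\bar Y_t)$ together with (ii)--(iii), whence $Y_t\geq\bar Y_t$ by strict monotonicity of $f_{t,\cdot}$.

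\emph{The hard part} will be handling the interplay between the $y$-dependence of the drivers and condition (iv), which is stated for fixed $y$: I must be careful to first neutralise the $y$-difference (using (ii) to replace $\bar F$ by $F$ and then monotonicity (iii) to compare $F(t,Y_t,\cdot)$ with $F(t,\bar Y_t,\cdot)$) before invoking (iv), which controls only the $(z,z')$-difference. A second subtlety is the meaning of the essential conditional minimum $\min_{\F_t}$ in this model-free, finitely-branching setting: I would justify that, since the tree branches into finitely many non-polar successors and the quantities are $\F_{t+1}$-measurable, this essential minimum is attained on an actual non-polar path, so that evaluating the subtracted dynamics there is legitimate and the strict inequality in (iv) genuinely bites.

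\emph{Finally}, for the strict comparison statement, I would revisit the equality case $Y_t=\bar Y_t$ on some $A\in\F_t$: the above argument then forces $(Z_t,Z_t')=(\bar Z_t,\bar Z_t')$ on $A$ (otherwise (iv) gives strict inequality), which makes the martingale increments and $G$-terms coincide, so that $Y_{t+1}=\bar Y_{t+1}$ q.s.\ on $A$; propagating this forward by induction up to time $T$ yields $Q=\bar Q$ on $A$, as claimed.
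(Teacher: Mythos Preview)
Your proposal is correct and follows essentially the same approach as the paper: backward induction, subtracting the two one-step recursions, inserting $\pm F(t,Y_t,\bar Z_t,\bar Z_t')$ and $\pm F(t,\bar Y_t,\bar Z_t,\bar Z_t')$ to separate the $y$-difference (handled by (ii) and (iii)) from the $(z,z')$-difference (handled by (iv)), and then the forward propagation for the strict comparison. The paper's version is marginally cleaner in that it works directly---taking the $\F_t$-conditional maximum of the subtracted equation rather than evaluating on a minimizing successor, and applying (iv) uniformly (with equality when $(z,z')=(\bar z,\bar z')$) rather than splitting into cases---but the decomposition and the use of each hypothesis are identical to yours.
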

\begin{proof}
Suppose $Y_{t+1}\geq \bar Y_{t+1}$. Then write
\[\begin{split}
   &Y_t-\bar Y_t- F(t, Y_t, \bar Z_t, \bar Z'_t) + F(t, \bar Y_t, \bar Z_t, \bar Z'_t) \\
&= (Y_{t+1} -\bar Y_{t+1})+ (F(t, \bar Y_t, \bar Z_t, \bar Z'_t) - \bar F(t, \bar Y_t, \bar Z_t, \bar Z'_t))\\
&\quad+ F(t, Y_t, Z_t, Z'_t) - F(t, Y_t, \bar Z_t, \bar Z'_t) + G_t(Z_t) - G_t(\bar Z'_t)\\
&\quad - (Z_t-\bar Z_t)M_{t+1} - (Z'_t - \bar Z'_t) N_{t+1}\\
&\geq F(t, Y_t, Z_t, Z'_t) - F(t, Y_t, \bar Z_t, \bar Z'_t) + G_t(Z_t) - G_t(\bar Z'_t)\\
&\quad - (Z_t-\bar Z_t)M_{t+1} - (Z'_t - \bar Z'_t) N_{t+1}.
  \end{split}\]
As this must hold up to a polar set, it holds under taking the essential maximum conditional on $\F_t$, that is,
\begin{equation}\label{eq:compthmbreakup}
 \begin{split}
   &Y_t-\bar Y_t- F(t, Y_t, \bar Z_t, \bar Z'_t) + F(t, \bar Y_t, \bar Z_t, \bar Z'_t) \\
&\geq F(t, Y_t, Z_t, Z'_t) - F(t, Y_t, \bar Z_t, \bar Z'_t) + G_t(Z_t) - G_t(\bar Z'_t)\\
&\quad +\max_{\F_t}\{ - (Z_t-\bar Z_t)M_{t+1} - (Z'_t - \bar Z'_t) N_{t+1}\}\\
&\geq 0
  \end{split}
\end{equation}
where the final inequality is by assumption (iv). As we then have
\[Y_t - F(t, Y_t, \bar Z_t, \bar Z'_t)\geq \bar Y_t - F(t, \bar Y_t, \bar Z_t, \bar Z'_t)\]
by assumption (iii), $Y_t\geq \bar Y_t$. Backwards induction then yields the general result.

To show the strict comparison, note that if $Y_t=Y'_t$ on some $A\in\F_t$, then the left hand side of (\ref{eq:compthmbreakup}) is zero. Hence, by assumption (iv), $(Z_t, Z'_t) = (\bar Z_t, \bar Z'_t)$ on $A$. We then have that, on $A$,
\[Y_{t+1} - \bar Y_{t+1} = -F(t,Y_t, Z_t, Z'_t) + \bar F(t, Y_t, Z_t, Z'_t) \leq 0,\]
but as we know $Y_{t+1}\geq \bar Y_{t+1}$, we must have $Y_{t+1}=\bar Y_{t+1}$ on $A$. Forward recursion then yields the result.
\end{proof}

Our final result shows that, in this context, any two $\SL$-expectations with the same polar sets can be expressed as solutions to $\SL$-BSDEs with respect to each other.

\begin{theorem}
 In the finite-state case, if $\F=\F_T$ for some finite $T<\infty$, for any given $\SL$-expectation $\E$, the following are equivalent.
 \begin{enumerate}[(i)]
  \item $\bar \E$ is a $\SL$-expectation absolutely continuous with respect to $\E$, in the sense that $\bar \E(I_A)=0$ for any $\E$-polar set $A$.
  \item There exists a function $F:\Omega\times\{0,...,T-1\}\times \R^{M}\times \R^{N-M-1} \to \R$ which
\begin{itemize}
 \item satisfies assumption (iv) of the comparison theorem (Theorem \ref{thm:compthm}),
 \item is sublinear, that is,  $F(\omega, t, z+\bar z, z'+\bar z') \leq F(\omega,t,z,z') + F(\omega,t,\bar z,\bar z')$ for all $z,\bar z \in \mathbb{R}^M$, $z',\bar z'\in \R^{N-M-1}$, and
 \item is positively homogenous, that is, $F(\omega, t, \lambda z, \lambda z') = \lambda F(\omega,t,z,z')$ for all $\lambda\geq 0$, all $z\in\R^M, z'\in\R^{N-M-1}$,
 \item is continuous in $z,z'$
\end{itemize}
 such that $Y_t:=\bar \E(Q|\F_t)$ is the solution to the $\SL$-BSDE with driver $F(\omega, t, z,z')$ and terminal value $Q$.
 \end{enumerate}
Furthermore, in this case $F$ is uniquely given by
\[F(z,z') = \E_t(zM_{t+1} + z'N_{t+1})-G_t(z').\]
\end{theorem}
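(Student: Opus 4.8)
The plan is to prove the two implications separately and then read off the uniqueness formula. Throughout, the increments $M_{t+1},N_{t+1}$ and the map $G_t(z')=\E_t(z'N_{t+1})$ are the objects attached to the \emph{given} expectation $\E$ through the representation of Theorem~\ref{thm:linearmartrepthm}, so the role of the driver $F$ is to record the discrepancy between $\bar{\E}$ and $\E$. Note first that, since $F$ will not depend on $y$, the map $y\mapsto y-F(t,z,z')$ is a translation and hence trivially a bijection, so the existence/uniqueness theorem for $\SL$-BSDEs applies without extra hypotheses.

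For (i)$\Rightarrow$(ii) I would set $Y_t:=\bar{\E}(Q\mid\F_t)$, which is a $\bar{\E}$-martingale by recursivity (axiom (ii) for $\bar{\E}$). For each $t$ the variable $Y_{t+1}-\E_t(Y_{t+1})$ is an $\E$-martingale difference, so Theorem~\ref{thm:linearmartrepthm} yields unique $\F_t$-measurable $(Z_t,Z'_t)$ with $Y_{t+1}-\E_t(Y_{t+1})=Z_tM_{t+1}+Z'_tN_{t+1}-G_t(Z'_t)$. Applying $\bar{\E}_t$ and using the translation rule $\bar{\E}_t(\alpha+W)=\alpha+\bar{\E}_t(W)$ for $\F_t$-measurable $\alpha$ (the earlier lemma applied to $\bar{\E}$, with the symmetric variable $\mathbf 1$) gives $Y_t=\E_t(Y_{t+1})+\bar{\E}_t(Z_tM_{t+1}+Z'_tN_{t+1})-G_t(Z'_t)$. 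This simultaneously exhibits $Y$ as the solution of the BSDE and forces the driver to be $F(z,z')=\bar{\E}_t(zM_{t+1}+z'N_{t+1})-G_t(z')$. That $F$ is a genuine function of $(z,z')$ rests on the injectivity of the representation map together with translation invariance, while absolute continuity guarantees these identities hold off every $\E$-polar (hence $\bar{\E}$-polar) set, so that $F$ is determined quasi-surely; this is also where the uniqueness of $F$ comes from.

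It then remains to verify the four properties of $F$, and conversely to run (ii)$\Rightarrow$(i). Positive homogeneity and continuity in $(z,z')$ are immediate, since $\bar{\E}_t$ and $\E_t$ are, in the finite-state setting, finite suprema of linear functionals. For assumption (iv) one substitutes the formula and observes that $F(z,z')-F(\bar z,\bar z')+G_t(z')-G_t(\bar z')$ collapses to $\bar{\E}_t(zM_{t+1}+z'N_{t+1})-\bar{\E}_t(\bar zM_{t+1}+\bar z'N_{t+1})$; writing $\Delta=(z-\bar z)M_{t+1}+(z'-\bar z')N_{t+1}$, the required strict bound against $\min_{\F_t}\Delta$ follows from $-\bar{\E}_t(-\Delta)\ge\min_{\F_t}\Delta$, with strictness coming from the fact that, under shared polarity, $\bar{\E}$ charges more than one branch. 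For (ii)$\Rightarrow$(i) I would instead \emph{define} $\bar{\E}(Q\mid\F_t):=Y_t$ through the backward recursion $\bar{\E}_t(Q)=\E_t(Y_{t+1})+F(Z_t,Z'_t)$: axioms (ii) and (iv) are built into the construction, monotonicity (axiom (i)) is exactly the comparison theorem (Theorem~\ref{thm:compthm}) applied using the $y$-independence and assumption (iv) of $F$, and the Fatou property is inherited from $\E$ via continuity of $F$. Absolute continuity is then clean: for $\E$-polar $A$ one has $I_A=0$ quasi-surely, so by uniqueness the solution with terminal value $I_A$ is identically $0$ and $\bar{\E}(I_A)=0$.

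The step I expect to be the genuine obstacle is the equivalence between sublinearity of $F$ and sublinearity of $\bar{\E}$. In the forward direction the centring term $-G_t(z')=-\E_t(z'N_{t+1})$ is subtracted with $\E$ rather than $\bar{\E}$, so $F$ is a difference of two sublinear functionals and need not be sublinear unless $\bar{\E}$ and $\E$ are suitably comparable; in the reverse direction the obstruction is that the representation of a sum $Q_1+Q_2$ is not the sum of the representations (because $\E_t$ is only subadditive), so transferring sublinearity of $F$ to $\bar{\E}$ requires a stability argument comparing $(Y^1+Y^2,Z^1+Z^2,Z'^1+Z'^2)$ with the solution for $Q_1+Q_2$. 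Both points turn delicately on the absolute-continuity hypothesis — more precisely on $\bar{\E}$ and $\E$ sharing polar sets so that every branch carries quasi-sure mass — and this is exactly where the finite-state structure and the precise form of $G_t$ must be used, and where one should check whether bare absolute continuity suffices or a two-sided compatibility is needed.
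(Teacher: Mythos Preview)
Your overall architecture matches the paper's proof exactly: for (i)$\Rightarrow$(ii) you represent $Y_{t+1}-\E_t(Y_{t+1})$ via Theorem~\ref{thm:linearmartrepthm}, read off $F(z,z')=\bar\E_t(zM_{t+1}+z'N_{t+1})-G_t(z')$, and check the four properties; for (ii)$\Rightarrow$(i) you define $\bar\E$ through the BSDE solution and verify the axioms of Definition~\ref{defn:Edefn} one by one, using Theorem~\ref{thm:compthm} for monotonicity. The verification of positive homogeneity, continuity, assumption~(iv), and absolute continuity is handled in the paper essentially as you outline.

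Where your proposal is more honest than the paper is on sublinearity. The paper dismisses sublinearity of $F$ as ``trivial'', but your diagnosis is correct: $F$ is the difference of two sublinear functionals, because $-G_t(z')=-\E_t(z'N_{t+1})$ is superadditive, so subadditivity of $F$ does \emph{not} follow formally from subadditivity of $\bar\E_t$ and $\E_t$. Likewise, for (ii)$\Rightarrow$(i) the paper argues sublinearity of $\bar\E$ by applying the comparison theorem with the ``driver'' $\tilde F:=F(Z,Z')+F(\bar Z,\bar Z')$; but, as you note, $(Y^1+Y^2,Z^1+Z^2,Z'^1+Z'^2)$ is not a solution of the $\SL$-BSDE with driver $\tilde F$, because the centring term produced is $G_t(Z'^1)+G_t(Z'^2)$ rather than $G_t(Z'^1+Z'^2)$, so the hypotheses of Theorem~\ref{thm:compthm} are not literally met. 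The paper does not address either point. Your instinct that both gaps hinge on a compatibility between $\bar\E$ and $\E$ beyond bare absolute continuity---effectively that the $G$-discrepancy can be absorbed---is the right place to push; but be aware that the paper offers no further argument here, so you will have to supply one yourself rather than find it in the source.
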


 \begin{proof}\emph{(ii) implies (i).} We suppress the $\omega$ and $t$ arguments of $F$ for simplicity. 
Let $\bar \E_t(Q):=Y_t$, the time $t$ solution of the BSDE with terminal value $Q$. We establish the properties of Definition \ref{defn:Edefn}.
\begin{enumerate}[(i)]
\item The statement $\bar\E_t(X) \geq \bar\E_t(Y)$ whenever $X\geq Y$ is the main result of Theorem \ref{thm:compthm}.
\item That $\E_s(\E_t(Q)) = \E_s(Q)$ follows from recursivity of BSDE solutions
\item As $I_AF(z, z')= F(I_Az, I_Az')$, we see that $(I_A Y, I_A Z)$ is the solution to a BSDE with driver $F$ and terminal condition $I_A Q$. It follows that $I_A\E_t(Q) =I_A Y_t = \E_t(I_AQ)$, which implies the desired relation.
\item As $F(0,0)=0$, the solution to the BSDE with $\F_t$-measurable terminal condition $Q$ will be $(Y_s, Z_s) = (Q, 0)$ for $s\geq t$.
\item Consider the solution components $(Z,Z')$ and $(\bar Z, \bar Z')$ to the BSDE with terminal values $X$ and $Y$. Then by sublinearity of $F$, $F(Z+\bar Z, Z'+\bar Z')\geq F(Z, Z')+ F(\bar Z, \bar Z')=:\tilde F$. Applying the comparison theorem to the BSDEs with the same solution $X+Y$ but drivers $F$ and $\tilde F$, we see that $\bar \E_t(X+Y) \leq \bar\E_t(X)+\bar\E_t(Y)$.
\item It is easy to verify the equivalent statement (vi'), by multiplying the one-step equation through by $\lambda$.
\item Note that as our state-space is finite, convergence in $L^1$ is equivalent to quasi-sure convergence. We see that if $\bar\E_{t+1}(X^n)$ converges, $Z_t$ and $Z'_{t+1}$ also converge, and by continuity of $F$, it follows that $\bar\E_{t}(X)$ will also converge.
\end{enumerate}

Finally, for any $z\in\R^{M}, z'\in\R^{N-M-1}$, define
\[Y_{t+1}:= Y_t - F(\omega, t, z, z') + zM_{t+1} + z'N_{t+1} - G_t(z').\]
As $zM_{t+1} + z'N_{t+1} - G_t(z')$ is an $\E$-martingale difference term, taking an $\E_t$ expectation and rearranging gives
\[\begin{split}
F(\omega, t, z,z')&= Y_t -\E_t(Y_{t+1})\\
&=\bar\E_t(Y_{t+1}) -\E_t(Y_{t+1})\\
&=\bar\E_t(Y_{t+1} -\E_t(Y_{t+1}))\\
&=\bar\E_t(zM_{t+1}+z'N_{t+1})-G_t(z')
\end{split}\]
as desired.

\emph{1 implies 2.}
We know that, for any $0\leq t<T$, we can write
\[\bar\E_t(Q) = \bar\E_t(\bar\E_{t+1}(Q)).\]

We propose that $Y_t := \bar\E_t(Q)$ will satisfy a BSDE with driver
\begin{equation}\label{eq:FintermsofE}
F(\omega, t, z, z') := \bar\E_t(zM_{t+1}+z'N_{t+1}) - G_t(z').
\end{equation}
For any $\F_t$-measurable $Z_t, Z'_t$, as our space has only finitely many paths, $Z_t = \sum_{A_i} I_{A_i} z_i$, $Z'_t = \sum_{A_i} I_{A_i} z'_i$ for some partition $\{A_i\}$ of $\F_t$ and some constants $z_i, z'_i$. Hence by the regularity property of nonlinear expectations,
\[\begin{split}
&\quad \bar\E_t(Z_tM_{t+1}+Z'_tN_{t+1}) = \sum_i I_{A_i} \bar\E_t(Z_t M_{t+1}+Z'_tN_{t+1}) \\
&=  \bar\E_t\left(\sum_i I_{A_i}(Z_t M_{t+1}+Z'_tN_{t+1})\right) = \bar\E_t\left(\sum_i I_{A_i}z_i M_{t+1} + z'_iN_{t+1}\right)\\
&=\sum_i I_{A_i}\bar\E_t(z_i M_{t+1}+z'_i N_{t+1}),\end{split}\]
and so, for all $\F_t$-measurable $Z_t, Z'_t$,
\[F(\omega, t, Z_t(\omega), Z'_t(\omega)) = \bar\E_t(Z_tM_{t+1}+Z'_tN_{t+1})(\omega) - G_t(Z'_t(\omega)).\]

For any $Y_{t+1}=\bar\E_{t+1}(Q)$,
\[\E(Y_{t+1} - \E_t(Y_{t+1}))=0\]
Therefore we can apply Theorem \ref{thm:linearmartrepthm} to show that, for some $\F_t$-measurable $Z_t, Z'_t$,
\begin{equation}\label{eq:p21e1}
Y_{t+1} - \E_t(Y_{t+1})+ G_t(Z'_t)=Z_tM_{t+1} +Z'_tN_{t+1}.
\end{equation}

As $\mathcal{E}$ is translation invariant, with $F$ as in (\ref{eq:FintermsofE}),
\begin{equation}\label{eq:p21e2}
\begin{split}
F(\omega, t, Z_t, Z'_t) &= \bar\E_t(Z_tM_{t+1}+Z'_tN_{t+1})-G(Z'_t)\\
&=\bar\E_t(Y_{t+1} - \E_t(Y_{t+1})\\
&=Y_t - \E_t(Y_{t+1}).
\end{split}\end{equation}

Therefore, we can combine (\ref{eq:p21e1}) and (\ref{eq:p21e2}) to give
\[Y_{t+1} = Y_t - F(\omega, t, Z_{t}, Z'_t) + Z_tM_{t+1}+ Z'_tN_{t+1} -G_t(Z'_t).\]

The one-step dynamics being established, $Y_t=\bar\E_t(Q)$ satisfies the BSDE with driver $F$ by induction.

We need only to show that the driver $F$ satisfies the various conditions. Sublinearity and positive homogeneity are trivial. As $\bar\E$ satisfies monotone convergence and we have only finitely many paths, we can show that $F$ is continuous. To show that $F$ satisfies assumption (iv) of the comparison theorem, note that for any $z,z', \bar z, \bar z'$, we have
\[zM_{t+1}+z'N_{t+1} \geq  \bar zM_{t+1}+\bar z'N_{t+1} - \max_{\F_t}\{(\bar z-z)M_{t+1}+(\bar z'-z')N_{t+1}\}\]
Taking an $\bar\E_t$-expectation and rearranging gives the result.
\end{proof}

\section{Conclusion}
We have given various results regarding the theory of $\SL$-expectations in discrete time. We have studied the related $L^p$ spaces, and uniform integrability,  from which we obtain a theory of $\SL$-martingales. In this context, we have given the natural extensions of many of the standard results from classical probability theory, including optional stopping, the Doob decomposition, martingale convergence, and, in a finite-state setting, the martingale representation theorem.

We have also studied the theory of $\SL$-BSDEs in a finite-state setting, and have shown that given any $\SL$-expectation, any other $\SL$-expectation admitting the same polar sets can be expressed as the solution to a $\SL$-BSDE.

This theory of $\SL$-expectations provides the underlying discrete-time results, which can also be used when working in continuous time. For example, our optional stopping and martingale convergence results can form the basis for the corresponding results in continuous time. Unlike most results in continuous time, we do not need to restrict our attention to those random variables admitting a quasi-continuous version. This gives a greater generality to our results.

In some special cases (see \cite{Dolinsky2011}), it is known that these discrete-time operators will converge to the continuous-time $G$-expectations. We expect that more general results, including for example processes with jumps, will be developed in the future.

\bibliographystyle{plain}  
\bibliography{../RiskPapers/General}
\end{document}